\newcommand{\etalchar}[1]{$^{#1}$}
\DeclareMathAlphabet{\mathfr}{U}{euf}{m}{n}
\newtheorem{theorem}{Theorem}[section]
\newtheorem{proposition}[theorem]{Proposition}
\newtheorem{corollary}[theorem]{Corollary}
\newtheorem{lemma}[theorem]{Lemma}
\newtheorem{question}[theorem]{Question}
\theoremstyle{remark}
\newtheorem{remark}[theorem]{Remark}
\newtheorem{definition}[theorem]{Definition}
\newcommand\acc[2]{\ensuremath{{}^{#1}\hskip-0.3ex{#2}}}
\newcommand{\comp}{\begin{picture}(6,5)(-3,-2)\put(0,1){\circle{2}} \end{picture}}\def\circ{\comp}
\newcommand{\lra}{\longrightarrow}
\newcommand{\ra}{\rightarrow}
\newcommand{\Q}{\mathbb Q}
\newcommand{\Qbar}{{\overline{\mathbb Q}}}
\newcommand{\Gal}{\mathrm{Gal}}
\newcommand{\R}{\mathbb R}
\newcommand{\Z}{\mathbb Z}
\newcommand{\C}{\mathbb C}
\newcommand{\GL}{\mathrm{GL}}
\newcommand{\SL}{\mathrm{SL}}
\newcommand{\M}{\mathrm{M }}
\newcommand{\PGL}{\mathrm{PGL}}
\newcommand{\Cl}{\mathrm{Cl}}
\newcommand{\End}{\operatorname{End}}
\newcommand{\Hom}{\operatorname{Hom}}
\newcommand{\Jac}{\operatorname{Jac}}
\newcommand{\Br}{\mathrm{Br}}
\newcommand{\Res}{\operatorname{Res}}
\newcommand{\im}{\operatorname{Im}}
\newcommand{\id}{\operatorname{id}}
\newcommand{\Tr}{\operatorname{Tr}}
\newcommand{\Unitary}{\mathrm{U}}
\newcommand{\SU}{\mathrm{SU}}
\newcommand{\USp}{\mathrm{USp}}
\newcommand{\ST}{\mathrm{ST}}
\newcommand{\Aut}{\mathrm{Aut}}
\newcommand{\Mm}{{\mathcal M}}
\numberwithin{equation}{section}
\newcommand{\cB}{\mathcal{B}}
\newcommand{\HH}{\mathbb{H}}
\newcommand{\bF}{\mathbf{F}}
\newcommand{\bE}{\mathbf{E}}
\newcommand{\bD}{\mathbf{D}}
\newcommand{\bC}{\mathbf{C}}
\newcommand{\bB}{\mathbf{B}}
\newcommand{\bA}{\mathbf{A}}
\newcommand{\cyc}[1]{{\mathrm{C}_#1}}
\newcommand{\dih}[1]{{\mathrm{D}_#1}}
\newcommand{\alt}[1]{{\mathrm{A}_#1}}
\newcommand{\sym}[1]{{\mathrm{S}_#1}}
\begin{document}

\title[Elliptic $k$-curves and genus $2$ Sato--Tate groups]{Fields of definition of elliptic $k$-curves and the realizability of all genus 2 Sato--Tate groups over a number field}

\author{Francesc Fit\'e}
\address{Departament de Matem\`atiques, Universitat Polit\`ecnica de Catalunya \\Edifici Omega, C/Jordi Girona 1--3\\
08034 Barcelona, Catalonia
}
\email{francesc.fite@gmail.com}
\urladdr{https://mat-web.upc.edu/people/francesc.fite/}
\thanks{Fit\'e was funded by the German Research Council via SFB/TR 45. }

\author{Xavier Guitart}
\address{Departament d'\`Algebra i Geometria\\
Universitat de Barcelona\\Gran via de les Corts Catalanes, 585\\
08007 Barcelona, Catalonia
}
\email{xevi.guitart@gmail.com}
\urladdr{http://atlas.mat.ub.edu/personals/guitart/}
\thanks{Guitart was partially funded by  MTM2012-33830 and MTM2012-34611.}

\date{\today}

\begin{abstract} Let $A/\Q$ be an abelian variety of dimension $g\geq 1$ that is isogenous over~$\Qbar$ to~$E^g$, where $E$ is an elliptic curve. If~$E$ does not have complex multiplication (CM), by results of Ribet and Elkies concerning fields of definition of elliptic $\Q$-curves $E$ is isogenous to a curve defined over a polyquadratic extension of $\Q$. We show that one can adapt Ribet's methods to study the field of definition of $E$ up to isogeny also in the CM case. We find two applications of this analysis to the theory of Sato--Tate groups: First, we show that~$18$ of the $34$ possible Sato--Tate groups of abelian surfaces over~$\Q$ occur among at most~$51$ $\Qbar$-isogeny classes of abelian surfaces over~$\Q$; Second, we give a positive answer to a question of Serre concerning the existence of a number field over which abelian surfaces can be found realizing each of the $52$ possible Sato--Tate groups of abelian surfaces.  
\end{abstract}
\maketitle

\tableofcontents

\section{Introduction}

It is well known that there exist three possibilities for the Sato--Tate group of an elliptic curve $E$ defined over a number field $k$: the special unitary group $\SU(2)$ of degree $2$, the unitary group $\Unitary(1)$ of degree $1$ embedded in $\SU(2)$, and the normalizer $N(\Unitary(1))$ of $\Unitary(1)$ in $\SU(2)$. These three possibilities are in accordance with the following trichotomy: the elliptic curve does not have complex multiplication (CM), the elliptic curve has CM defined over $k$, and the elliptic curve has CM but not defined over $k$.

From this description, it is easy to see that there exists a number field (for example, take any quadratic imaginary field of class number $1$) over which three elliptic curves can be defined realizing each of the three Sato--Tate groups $\SU(2)$, $\Unitary(1)$, and $N(\Unitary(1))$.

In \cite{FKRS12}, it was shown that there exist 52 possible Sato--Tate groups of abelian surfaces over number fields, all of which occur for some choice of the number field and of the abelian surface defined over it. Let $N_{\mathrm{ST},2}(k)$ denote the number of subgroups of $\USp(4)$ up to conjugacy that arise as Sato--Tate groups of abelian surfaces defined over the number field $k$. In \cite{FKRS12}, it was proven that $N_{\mathrm{ST},2}(\Q)=34$. 
Serre has asked the following question.

\begin{question}\label{question: existence}
Does there exist a number field $k$ over which $52$ abelian surfaces can be found realizing each of the $52$ possible Sato--Tate groups of abelian surfaces over number fields? In other words, does there exist a number field $k$ such that $N_{\mathrm{ST},2}(k)=52$? 
\end{question}

It is well-known that the connected component of the identity of the Sato--Tate group is invariant under base change. However, its group of components is sensitive to base change, and it is therefore not necessarily true (and in fact, it is not) that the compositum of all fields of definition of the examples in \cite{FKRS12} gives a positive answer to the above question.

The question is thus on whether the conditions imposed on $k$ by each of the possible groups of components are compatible or not among them. Fundamental to this analysis is the existence of an isomorphism between the group of components of the Sato--Tate group of an abelian surface $A$ defined over $k$ and the Galois group $\Gal(K/k)$, where $K/k$ is the minimal extension over which all of the endomorphisms of $A$ are defined.

Let us look at a concrete example of the type of conditions on $k$ imposed by certain groups of components. We will consider abelian surfaces $A$ defined over a number field $k$ satisfying
\begin{enumerate}
\item[(P)]  $\Gal(K/k)$ contains $\sym 4$.
\end{enumerate}
One can easily show that (P) implies that $A$ is isogenous over $\Qbar$ to the square of an elliptic curve with CM, say by a quadratic imaginary field $M$. There are only three Sato--Tate groups (denoted by~$O$, $O_1$, and~$J(O)$ in \cite{FKRS12}) whose group of components contains the symmetric group~$\sym 4$.
The dictionary between Sato--Tate groups and Galois endomorphism types of \cite{FKRS12} ensures that~$O$ only arises among $A$ satisfying (P) for which $M\subseteq k$, whereas~$O_1$ and $J(O)$ can occur only for $A$ satisfying (P) with $M\not \subseteq k$.

Up to our knowledge, all the examples in the existing literature of abelian surfaces~$A$ satisfying (P) have $M=\Q(\sqrt{-2})$. The lack of a simple construction of such an~$A$ with $M\not =\Q(\sqrt{-2})$, together with the fact that the wide computational search of abelian surfaces performed in \cite{FKRS12} yielded only examples of $A$ satisfying (P) with $M=\Q(\sqrt{-2})$, may suggest that indeed $M=\Q(\sqrt{-2})$ might be a necessary condition for $A$ to satisfy (P). If this was the case, then Question~\ref{question: existence} would have a negative answer, since in order to realize, for example, both~$O$ and~$J(O)$ the field~$k$ would be forced to contain and not contain $\Q(\sqrt{-2})$ simultaneously.

As a consequence of the preceding discussion, one is naturally led by Question~\ref{question: existence} to the following question regarding a basic aspect of the arithmetic of abelian varieties over $\Q$.

\begin{question}\label{question: sobre M}
Let $A$ be an abelian variety defined over $\Q$ of dimension $g\geq 1$ that is $\Qbar$-isogenous to $E^g$, where $E$ is an elliptic curve over $\Qbar$ with CM by $M$.  
\begin{itemize}
\item[(A)] Which is the set of possibilities for $M$? 
\item[(B)] Let $K/M$ be the minimal extension over which all the endomorphisms of $A$ are defined. Does the prescription of $\Gal(K/M)$ impose further restrictions on $M$?
\end{itemize}
\end{question}

If $g=1$, then the theory of complex multiplication shows that $M$ is a quadratic imaginary field of class number $1$ and thus there are only~$9$ possibilities for $M$. In this work, we provide an answer to Question~\ref{question: sobre M} for $g=2$, which sets the basis on which we build a positive answer to Question~\ref{question: existence}.

\textbf{Main results.} As the case $g=1$ may suggest, an answer to Question~\ref{question: sobre M} will follow, via the theory of complex multiplication, from gaining control on the field of definition of the elliptic factor $E$. The study of the field of definition of $E$ will be carried out in \S\ref{section: field def}, whose main result is the following (see Theorem~\ref{theorem: ggext}). 

\begin{theorem}\label{theorem: intro1}
Let $E$ be an elliptic curve defined over $\Qbar$ with CM by a quadratic imaginary field $M$ such that $E^g$, for some $g\geq 1$, is $\Qbar$-isogenous to an abelian variety $A$ defined over $k$. Assume that $M$ is contained in $k$. Let $K/k$ be the minimal extension over which all the endomorphisms of $A$ are defined. Then $E$ is $\Qbar$-isogenous to an elliptic curve $E^*$ defined over an abelian subextension $F/k$ of $K/k$ such that every element in $\Gal(F/k)$ has order dividing~$g$.
\end{theorem}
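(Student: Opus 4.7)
The plan is to adapt Ribet's cocycle construction for $k$-curves to the CM setting. Fix an isogeny $\phi\colon E^g\to A_{\Qbar}$. For each $\sigma\in G_k$, the map $\psi_\sigma:=\phi^{-1}\circ\acc{\sigma}{\phi}\colon\acc{\sigma}{E}^g\to E^g$ is an isomorphism in the isogeny category satisfying $\psi_{\sigma\tau}=\psi_\sigma\circ\acc{\sigma}{\psi}_\tau$. Because $M\subseteq k$, each $\End^0(\acc{\sigma}{E})$ is canonically identified with $M$ and each $\Hom^0(\acc{\sigma}{E},E)$ is free of rank one over $M$; choosing generators $\mu_\sigma\colon\acc{\sigma}{E}\to E$ and writing $\mu_\sigma^{(g)}:=\diag(\mu_\sigma,\dots,\mu_\sigma)$, I decompose $\psi_\sigma=P_\sigma\cdot\mu_\sigma^{(g)}$ with $P_\sigma\in\GL_g(M)$. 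Defining the $2$-cocycle $c\in Z^2(G_k,M^\times)$ by $\mu_\sigma\circ\acc{\sigma}{\mu}_\tau=c(\sigma,\tau)\mu_{\sigma\tau}$, the $M$-linearity of the $\mu_\sigma$ together with the cocycle identity for $\psi$ yields
\[ P_{\sigma\tau}=c(\sigma,\tau)\,P_\sigma P_\tau. \]
Taking determinants gives $c(\sigma,\tau)^g=d(\sigma\tau)d(\sigma)^{-1}d(\tau)^{-1}$, where $d(\sigma):=\det P_\sigma$, so $[c]\in H^2(G_k,M^\times)$ is annihilated by $g$.

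The assignment $\sigma\mapsto[P_\sigma]$ descends to a homomorphism $\bar\rho\colon G_k\to\PGL_g(M)$ with kernel $G_K$, since $\bar\rho$ is the Galois representation on the central simple $M$-algebra $\End^0(A_{\Qbar})$. Composing with the reduced determinant $\PGL_g(M)\to M^\times/(M^\times)^g$ yields a homomorphism
\[ \chi\colon G_k\longrightarrow M^\times/(M^\times)^g,\qquad \chi(\sigma)=d(\sigma)\bmod (M^\times)^g, \]
which is independent of the choices of $\phi$ and of the $\mu_\sigma$ and which factors through $\Gal(K/k)$. I take $F$ to be the fixed field of $\ker\chi$; then $F\subseteq K$ and $\Gal(F/k)$ embeds into the abelian group $M^\times/(M^\times)^g$ of exponent~$g$, giving the Galois-theoretic conclusion of the statement.

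It remains to produce an $E^*/F$ in the $\Qbar$-isogeny class of $E$. For $\sigma\in G_F$, $d(\sigma)=e(\sigma)^g$ for some $e(\sigma)\in M^\times$; replacing $\mu_\sigma$ by $e(\sigma)\mu_\sigma$ over $G_F$ turns $P_\sigma$ into an element of $\SL_g(M)$ and modifies $c$ by a coboundary into a cohomologous cocycle $\tilde c$ with $\tilde c^g=1$. Hence $[c]|_{G_F}$ is represented by a class in $H^2(G_F,\mu_g(M))$. The main remaining task, and the step I expect to be the principal technical obstacle, is to trivialize this residual $\mu_g$-valued class in $H^2(G_F,M^\times)$; once it vanishes, a further rescaling of the $\mu_\sigma$ gives a strict $1$-cocycle $\mu_\sigma\circ\acc{\sigma}{\mu}_\tau=\mu_{\sigma\tau}$, and Galois descent in the isogeny category produces an elliptic curve $E^*$ over $F$ that is $\Qbar$-isogenous to $E$. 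I expect this vanishing to exploit the additional rigidity provided by $A$ itself (not merely its $\Qbar$-isogeny class) being defined over $k$, via a Hilbert~90/Kummer argument that trades the $\mu_g$-twist against the $\GL_g(M)$-ambiguity in the choice of $\phi$, in the spirit of Ribet's treatment of the non-CM case.
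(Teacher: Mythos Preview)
Your setup is correct and parallels the paper closely: the cocycle $c$ and the proof that $[c]^g=1$ via determinants is exactly the paper's Corollary~\ref{cor: cocycle of order g}, and your character $\chi$ with values in $M^\times/(M^\times)^g$ plays the role of the component $\overline\gamma\in\Hom(G_k,P/P^g)$ in the paper's decomposition~\eqref{equation: dec}. Your choice of $F$ as the field trivializing $\chi$ is legitimate and lies inside $K$ with $\Gal(F/k)$ of exponent dividing $g$, as you say.

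However, there is a genuine gap precisely where you flag it. You reduce to a class in the image of $H^2(G_F,\mu_g(M))\to H^2(G_F,M^\times)$ and then write ``I expect this vanishing to exploit\dots''. This is the crux of the argument, and your suggested mechanism (a Hilbert~90/Kummer trade against the $\GL_g(M)$-ambiguity in $\phi$) is not how it goes and is unlikely to work: the determinant already extracted all the information from that ambiguity, and there is no reason a general $\mu_g$-valued $2$-class should die in $H^2(G_F,M^\times)$. The paper's Lemma~\ref{lemma: gammabartrivializes} handles this step by an entirely different route. One shows that $\gamma$ lies in the image of the connecting map
\[
\delta\colon H^1\bigl(G_k,(M\otimes_\Q\Qbar)^*/M^*\bigr)\longrightarrow H^2(G_k,M^*)
\]
by writing down an explicit preimage using the action of the isogenies $\mu_\sigma$ on an invariant differential of $E$. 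Since $M\subseteq k$, one has $H^2(G_k,(M\otimes\Qbar)^*)\simeq H^2(G_k,\Qbar^*)^{\oplus 2}$, and the natural map $H^2(G_k,U)\to H^2(G_k,\Qbar^*)$ is injective (as $H^2(G_k,U)\simeq H^2(G_k,\Qbar^*)[|U|]$). Hence once $\overline\gamma$ is killed, the residual $U$-component $\gamma_U$ is forced to vanish as well. You should replace your final paragraph with this argument.

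A secondary point: the paper does not stop after producing a model over the field where the cocycle trivializes. It then invokes CM theory to pass to an $E^*$ with CM by the \emph{maximal} order of $M$, defined over $F=kH$ with $H$ the Hilbert class field of $M$, and checks $F\subseteq L$. This gives the concrete identification of $F$ stated in Theorem~\ref{theorem: ggext}, which is used later.
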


Observe that the elliptic curve $E$ in the preceding theorem is an elliptic $k$-curve in the sense of Ribet (the notion of elliptic $k$-curve and, more generally, of abelian $k$-variety, will be recalled in \S \ref{section: kvar}). A characterization of the field of definition of an elliptic $k$-curve $C$ without CM (achieved by Ribet \cite{Rib94} and Elkies \cite{Elk04} by means of completely different methods) is well known: $C$ admits a model up to isogeny defined over a polyquadratic\footnote{That is, the composition of a finite number of quadratic extensions.} extension of $k$. We will show that in our particular situation Ribet's methods can also be applied to the CM case. The crucial idea is that $E^g$ admitting a model over $k$ implies that the $g$th power of the cohomology class $\gamma_{E}\in H^2(G_k,M^*)$ naturally attached to $E$ is trivial, a property that does not necessarily hold for an arbitrary CM elliptic $k$-curve~$C$.

 Combining Theorem~\ref{theorem: intro1} with bounds on the degree of $K/k$ (deduced from the methods of \cite{Sil92} and \cite{GK16}), one obtains the following answer to part $(A)$ of Question~\ref{question: sobre M}.

\begin{theorem}\label{theorem: intro2}
With the notations as in Question~\ref{question: sobre M}, the class group $\Cl(M)$ of~$M$ has exponent dividing $g$. If moreover $g$ is prime, then 
$$
\Cl(M)=
\begin{cases}
1,\cyc g,\text{ or }\cyc g \times \cyc g & \text{ if } g=2,\\ 
1\text{ or } \cyc g & \text{ otherwise.}
\end{cases}
$$
In particular, if $g=2$, there are at most $51$ possibilities for $M$. 
\end{theorem}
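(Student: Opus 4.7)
The plan is to apply Theorem~\ref{theorem: intro1} after base-changing from $\Q$ to~$M$, then invoke the main theorem of complex multiplication to force the Hilbert class field of~$M$ inside the extension~$F$ produced by that theorem. Since $\End(A_{\Qbar})\otimes\Q\cong \M_g(M)$, the field~$M$ must embed into the minimal field of definition of the endomorphisms of~$A$, so $M\subseteq K$. The base change $A_M/M$ is then $\Qbar$-isogenous to $E^g$ and has the same minimal endomorphism field~$K$, so Theorem~\ref{theorem: intro1} applied with~$M$ in place of~$k$ yields an elliptic curve $E^*$, $\Qbar$-isogenous to~$E$, defined over an abelian subextension $F/M$ of $K/M$ with $\Gal(F/M)$ of exponent dividing~$g$.

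Now the main theorem of complex multiplication identifies $M(j(E^*))$ with the ring class field of $\End(E^*)\subseteq \mathcal{O}_M$, and this ring class field always contains the Hilbert class field~$H$ of~$M$. Since $E^*$ is defined over~$F$, the invariant $j(E^*)$ lies in~$F$, and therefore $H\subseteq F$. Restriction gives a surjection $\Gal(F/M)\twoheadrightarrow\Gal(H/M)\cong\Cl(M)$, so $\Cl(M)$ is abelian of exponent dividing~$g$, which settles the first assertion.

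For the refined statement with $g$ prime, $\Cl(M)$ is elementary abelian of some $g$-rank~$r$, and what remains is to bound~$r$. Here the bounds on $[K:k]$ coming from \cite{Sil92,GK16} enter the picture: since $[F:M]=g^r$ and $F\subseteq K$, those bounds translate into a bound on~$r$. For $g=2$ they should give $r\le 2$, hence $\Cl(M)\in\{1,\cyc 2,\cyc 2\times\cyc 2\}$; for $g$ an odd prime they should give $r\le 1$, hence $\Cl(M)\in\{1,\cyc g\}$. The technical heart of the argument is precisely this step: one has to extract from the ambient bounds on $[K:\Q]$ a sharp bound on the maximal abelian subquotient of exponent~$g$, using crucially that $\End(A_{\Qbar})\otimes\Q=\M_g(M)$ heavily restricts the possible $\Gal(K/\Q)$.

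Finally, the count of~$51$ possibilities for~$M$ in the case $g=2$ follows from the classical tables of class groups of imaginary quadratic fields: there are~$9$ such fields with trivial class group, $18$ with class group $\cyc 2$, and $24$ with class group $\cyc 2\times\cyc 2$, totalling~$51$.
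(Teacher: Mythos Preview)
Your approach matches the paper's: base-change to $M$, invoke Theorem~\ref{theorem: intro1}, and use CM theory to trap the Hilbert class field inside $F$. This yields the exponent statement, and the count of $51$ is exactly as you say.

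Two remarks. First, the aside that $M\subseteq K$ because $\End(A_{\Qbar})\cong\M_g(M)$ is neither needed nor clearly justified as written (the copy of $M$ inside the endomorphism algebra is an abstract field, not a priori a subfield of $\Qbar$). You only need to apply Theorem~\ref{theorem: intro1} to $A_M$; whether the minimal endomorphism field over $M$ is $K$ or $KM$ is irrelevant, since either way $F/M$ has exponent dividing $g$.

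Second, the bound on the $g$-rank $r$ is only sketched, and this is where the actual work lies. For $g=2$ the paper does \emph{not} appeal to \cite{Sil92,GK16}: it uses the explicit list of possible groups $\Gal(K/M)$ for abelian surfaces (namely $\cyc n$, $\dih n$, $\alt 4$, $\sym 4$), from which $r\le 2$ is read off directly. For $g\ge 3$ prime the paper specializes the proof of \cite[Thm.~5.4]{GK16} to the case $V\simeq W^g$ (so $a=2$, $b=1$, $c=g$ in their notation), obtaining that the $g$-part of $|\Gal(K/M)|$ is bounded by $r(\GL(g)_M,g)$, which is checked to be at most $1$ unless $M=\Q(\sqrt{-g})$ with $g\equiv 3\pmod 4$; that exceptional case is disposed of separately via the elementary bound $|\Cl(M)|\le \tfrac{2}{3}g$ from reduction theory. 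Your proposal correctly identifies the ingredients but does not carry out this computation, and in particular misses the special treatment of $M=\Q(\sqrt{-g})$.
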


We refer the reader to Theorem~\ref{theorem: absurfQ} for a more precise version of the above theorem in the case $g=2$, which accounts also for part $(B)$ of Question~\ref{question: sobre M}. Just to show the flavour of the result, let us state a particular instance of it: if for example $\Gal(K/M)\simeq \sym 4$, then $M$ is either $\Q(\sqrt{-1})$, $\Q(\sqrt{-2})$ or a quadratic imaginary field of class number~$2$ and distinct from $\Q(\sqrt{-15})$, $\Q(\sqrt{-35})$, $\Q(\sqrt{-51})$, and $\Q(\sqrt{-115})$.

These kind of constraints on $M$ are obtained by means of two different methods. First, we consider obstructions coming from group representation theory. These are obtained by pushing a bit further the study in \cite[\S3]{FS14} of the Galois module structure of the ring of endomorphisms of an abelian surface that is $\Qbar$-isogenous to the square of an elliptic curve. This is done in \S\ref{section: galstr}. Second, we consider obstructions coming from group cohomology. These are obtained by means of a detailed study of the endomorphism algebra of the Weil's restriction of scalars $\Res_{K/k}E$ in the case $\Gal(K/k)\simeq \sym 4$. We again benefit from a technique exploited in Ribet's work: we look at this algebra as a twisted group algebra. A key step towards determining its structure is to compute its center; we devote \S\ref{sec: restriction of scalars} to that calculation. In \S\ref{section: imquad}, it only remains to combine the results of \S\ref{section: galstr} and \S\ref{sec: restriction of scalars}. 

Thanks to the dictionary between Sato--Tate groups and Galois endomorphism types, there is an equivalent formulation of Theorem~\ref{theorem: intro2} in the case $g=2$.

\begin{theorem}\label{theorem: intro3} Among the $34$ possibilities for the Sato--Tate group of an abelian surface defined over $\Q$, the $18$ with identity component isomorphic to $\Unitary(1)$ occur among at most $51$ $\Qbar$-isogeny classes of abelian surfaces over $\Q$.
\end{theorem}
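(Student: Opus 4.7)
The plan is to deduce Theorem~\ref{theorem: intro3} as a direct translation of Theorem~\ref{theorem: intro2} in the case $g=2$, through the dictionary between Sato--Tate groups and Galois endomorphism types established in~\cite{FKRS12}.

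First, I would use that dictionary to characterize the $18$ Sato--Tate groups whose identity component is $\Unitary(1)$: the identity component of the Sato--Tate group of an abelian surface $A/\Q$ equals $\Unitary(1)$ precisely when $A$ is $\Qbar$-isogenous to $E^2$ for some elliptic curve $E/\Qbar$ with complex multiplication. This places such surfaces exactly in the setting of Question~\ref{question: sobre M} with $g=2$, so the CM field $M$ of $E$ is subject to the constraints of Theorem~\ref{theorem: intro2}.

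Second, I would invoke Theorem~\ref{theorem: intro2} with $g=2$, which bounds $\Cl(M)$ to be one of $1$, $\cyc 2$, or $\cyc 2 \times \cyc 2$, yielding at most $51$ possibilities for the quadratic imaginary field $M$.

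Finally, I would observe that, by the theory of complex multiplication, any two elliptic curves over $\Qbar$ with complex multiplication by the same quadratic imaginary field $M$ (possibly by distinct orders of $M$) are $\Qbar$-isogenous; consequently, the $\Qbar$-isogeny class of $E$, and hence that of $E^2\sim_{\Qbar} A$, depends only on $M$. Combining this with the previous step gives at most $51$ $\Qbar$-isogeny classes of abelian surfaces $A/\Q$ that realize one of the $18$ Sato--Tate groups in question. With Theorem~\ref{theorem: intro2} already in hand there is no essential obstacle remaining; the only content is the bookkeeping that transfers the bound on $M$ into a bound on $\Qbar$-isogeny classes via the FKRS dictionary and the CM uniqueness statement.
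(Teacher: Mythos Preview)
Your proposal is correct and follows essentially the same approach as the paper: the paper presents Theorem~\ref{theorem: intro3} as an equivalent reformulation of Theorem~\ref{theorem: intro2} for $g=2$ via the FKRS dictionary (identity component $\Unitary(1)$ $\Leftrightarrow$ absolute Galois endomorphism type~$\bF$, i.e., $A_{\Qbar}\sim E^2$ with $E$ CM), and the more detailed Theorem~\ref{theorem: finnum} is proved in exactly this way. Your step making explicit that the $\Qbar$-isogeny class of $E$ (hence of $E^2$) depends only on $M$ is the one ingredient the paper leaves implicit, and it is correct.
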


We refer to Theorem~\ref{theorem: finnum} for a more precise version of the above statement. At this stage, we are ready to provide an affirmative answer to Question~\ref{question: existence}, which is the main result of this paper (see Theorem~\ref{theorem: main}).

\begin{theorem}
One has $N_{\mathrm{ST},2}(k_0)=52$ for
$$
k_0:=\Q(\sqrt{-40},\sqrt{-51},\sqrt{-163},\sqrt{-67},\sqrt{19\cdot 43},\sqrt{-57})\,.
$$
\end{theorem}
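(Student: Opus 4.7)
The plan is to exhibit, for each of the $52$ Sato--Tate groups $G$ classified in \cite{FKRS12}, an abelian surface $A$ defined over some subfield of $k_0$ (or over $k_0$ itself) whose Sato--Tate group, after base change to $k_0$, equals $G$. Since the identity component of the Sato--Tate group is a $\Qbar$-isogeny invariant while the component group is canonically isomorphic to $\Gal(K\cdot k_0/k_0)$, where $K$ is the minimal field of definition of the endomorphisms of $A$, the verification splits naturally along the partition of the $52$ groups into (a) the $34$ whose identity component is not $\Unitary(1)$, and (b) the $18$ whose identity component is $\Unitary(1)$. The former are comparatively soft, since $\Qbar$-isogeny classes realizing them are plentiful; the latter are constrained by Theorem~\ref{theorem: intro2} and its refinement Theorem~\ref{theorem: absurfQ}, which force the CM field $M$ to belong to a finite explicit list depending on the group of components.

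I would begin by tabulating, for each of the $18$ groups of type (b), both the admissible CM fields $M$ and the prescribed behaviour of $M$ with respect to the base field (i.e.\ whether $M\subseteq k$ or $M\not\subseteq k$, as in the case of $O$ versus $O_1$ and $J(O)$ discussed in the introduction). The choice of $k_0$ is manifestly made so as to contain precisely the imaginary quadratic fields $\QQ(\sqrt{-10})$, $\QQ(\sqrt{-51})$, $\QQ(\sqrt{-163})$, $\QQ(\sqrt{-67})$, $\QQ(\sqrt{-57})$ and the real quadratic field $\QQ(\sqrt{817})$ needed by the various cases; one should first check that the subfield lattice of $k_0/\QQ$ realizes exactly the right inclusion/non-inclusion pattern relative to each required $M$ (so that, in particular, the $M$'s needed by groups of type $O_1$ or $J(O)$ are not inadvertently absorbed into $k_0$).

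Next, for each of the $52$ groups $G$, I would draw a concrete abelian surface from the explicit database of \cite{FKRS12} (or, where necessary, from the constructions used in this paper to prove the existence statements underlying Theorem~\ref{theorem: absurfQ}), defined over the smallest possible number field $k$, and then trace through the effect of the base change to $k\cdot k_0$. For the $34$ groups of type (a), the relevant groups of components are sufficiently small and the constraints on $k$ sufficiently mild that the component group is preserved under base change to $k_0$. For the $18$ groups of type (b), one has to check case by case that $\Gal(K/k)$ does not shrink when passing to $\Gal(K\cdot k_0/k_0)$, or, equivalently, that $K\cap k_0=k$; this reduces, via the dictionary between Sato--Tate groups and Galois endomorphism types from \cite{FKRS12}, to purely Galois-theoretic checks on which subfields of $K$ can lie in $k_0$.

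The main obstacle will be the final bookkeeping for the $\Unitary(1)$-identity groups, in particular those with large component groups such as $O$, $O_1$, and $J(O)$, where by Theorem~\ref{theorem: absurfQ} only a few CM fields are admissible. Here, a twist or a careful choice of model may be required to ensure that both the prescribed $M$ is compatible with $k_0$ (so that examples exist) and that no unwanted collapse of $\Gal(K/k)$ occurs under base change to $k_0$. Assuming all these compatibilities are verified, the identity $N_{\mathrm{ST},2}(k_0)=52$ follows by enumeration, with the upper bound $N_{\mathrm{ST},2}(k)\leq 52$ provided by the general classification in \cite{FKRS12} and the lower bound coming from the constructed realizations.
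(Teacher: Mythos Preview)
Your high-level strategy---construct one abelian surface per Sato--Tate group over a subfield of $k_0$ and verify that base change to $k_0$ preserves the component group via the condition $K\cap k_0=k$---is indeed the paper's approach, but your execution contains a genuine error and is otherwise too vague to stand as a proof.

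The partition you propose is wrong: it is \emph{not} the case that $34$ of the $52$ groups have identity component different from $\Unitary(1)$ and $18$ have identity component $\Unitary(1)$. In fact $32$ of the $52$ groups have identity component $\Unitary(1)$ (those of absolute type $\bF$ in Table~\ref{table:STgroups}) and only $20$ do not. You have conflated two different $34/18$ splits: the paper's organizing dichotomy is between the $34$ groups that \emph{arise over $\Q$} (marked $\star$ in Table~\ref{table:STgroups}) and the $18$ that do not. The $34$ over-$\Q$ groups are handled by base-changing the explicit curves of \cite[Table~11]{FKRS12} and checking computationally that $K\cap k_0=\Q$ for each; this is where Remark~\ref{remark: nofields} intervenes, forcing $k_0$ to avoid $\Q(i)$, $\Q(\sqrt{-2})$, $\Q(\sqrt{-3})$.

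The substantive work, which your proposal does not address, lies with the remaining $18$ groups (namely $C_n$, $D_n$, $T$, $O$, $J(C_1)$, $J(C_3)$, $C_{4,1}$, $F$, $F_a$, $F_{ab}$, $G_{1,3}$), and these require five distinct ad hoc constructions: the restriction-of-scalars surface of \S\ref{section: an example} for $O$ (this is the entire point of the paper and explains $\sqrt{-40}\in k_0$); Mazur--Rubin--Silverberg twisting by Artin representations for $C_1,C_2,C_3,D_2,D_3,T$ (explaining $\sqrt{-163}$); Cardona--Quer's parametrized families for $C_4,D_4,C_6,D_6,J(C_3)$ (explaining $\sqrt{-51}$ and $\sqrt{-57}$); two extra curves found by Sutherland for $J(C_1)$ and $C_{4,1}$; and products of CM elliptic curves for $F,F_a,F_{ab},G_{1,3}$ (explaining $\sqrt{-67}$ and $\sqrt{19\cdot 43}$). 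Your proposal to ``draw a concrete abelian surface from the explicit database of \cite{FKRS12}'' cannot work for these $18$, since the examples in that database are over fields containing $\Q(i)$, $\Q(\sqrt{-2})$, or $\Q(\sqrt{-3})$, which is precisely what $k_0$ must avoid.
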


After the discussion following Question~\ref{question: existence}, it is clear that constructing an abelian surface defined over $k$ satisfying (P) and for which $M\not =\Q(\sqrt{-2})$ is a crucial step in the proof. We devote the whole of \S\ref{section: an example} to construct an abelian surface $A$ defined over $k=\Q(\sqrt{-40})$, $\Qbar$-isogenous to the square of an elliptic curve with CM by $M=\Q(\sqrt{-40})$, and for which $\Gal(K/k)\simeq \sym 4$.  This abelian surface $A$ is obtained as a simple factor of the restriction of scalars of a certain elliptic curve with CM by $\Q(\sqrt{-40})$ over a suitable extension $K/k$; in the construction, we make crucial use of the techniques developed in \S\ref{sec: restriction of scalars}. The construction of the remaining abelian surfaces, which is carried out in \S\ref{section: ST}, requires less elaborate techniques, though still forces~$k_0$ to contain a few more additional quadratic fields.

\textbf{Notations and terminology.}
Throughout this article,~$k$ will be a number field assumed to be contained in a fixed algebraic closure $\Qbar$ of~$\Q$. We will write~$G_k$ for the absolute Galois group $\Gal(\Qbar/k)$. All field extensions of~$k$ considered will be assumed to be algebraic and contained in $\Qbar$. By $\zeta_r$ we will denote a primitive $r$th root of unity, and $g$ will be an integer $\geq 1$.  We will work in the category of abelian varieties up to isogeny over $k$. This means that isogenies become invertible in this category and therefore, if~$A$ is an abelian variety defined over~$k$, we will write $\End(A)$ to denote the $\Q$-algebra of endomorphisms defined over $k$ (what some authors write $\End(A)\otimes \Q$ or $\End^0(A)$). Similarly, if~$B$ is an abelian variety defined over~$k$, then $\Hom(A,B)$ will denote the $\Q$-vector space of homomorphisms from~$A$ to~$B$ that are defined over~$k$. Although isogenies will be the isomorphisms of our working category, we will still write $A\sim B$ to mean that~$A$ and~$B$ are isogenous over~$k$. If $L/k$ is a field extension, then $A_L$ will denote the base change of $A$ from $k$ to $L$, and we will write $A_L\sim B_L$ if~$A$ and~$B$ become isogenous over~$L$ (in particular, we will write $\Hom(A_L,B_L)$ to refer to what some authors write as $\Hom_L(A,B)$). If~$C$ is an abelian variety over $L$, for short, we will say that~$C$ admits a model up to isogeny over $k$ if there exists an abelian variety~$C_0$ defined over $k$ such that $C_\Qbar\sim C_{0,\Qbar}$. Finally, if $\mathcal{B}$ is an algebra we will denote by $Z(\mathcal{B})$ its center.

\textbf{Acknowledgments.} Fit\'e is thankful to CIRM and ICERM for invitations in May and in September of 2015, respectively, which facilitated fruitful conversations with Drew Sutherland regarding this work. Thanks to Carlos de Vera, Elisa Lorenzo, and Mark Watkins for helpful comments. We are thankful to Kiran Kedlaya and Jordi Quer for their inspiring suggestions, specially regarding the proof of Corollary~\ref{corollary: classbiquad_new}. 

\textbf{Erratum.} This version contains some corrections to the version published in Trans. Amer. Math. Soc. \textbf{370}, n. 7 (2018). These corrections are related to a mistake in part ii) of Lemma \ref{lemma: LF} in the published version (which was pointed out to us by Kiran Kedlaya, and for which we express him our gratitude). The statement and proof of the lemma have been amended in the present version. In addition to $\mathrm{B_O}$ and $\GL_2(\Z/3\Z)$, the corrected lemma now allows for the complex reflexion group $G_8$. The correction in the lemma carries some consequences: we note that part i) of Proposition \ref{proposition: M}, as well as row $\sym 4$ of Table \ref{table: M}, and row $J(O)$ of Table \ref{table: MM} now allow for the possibility $M=\Q(\sqrt{-1})$. We accordingly corrected the allusion to row $\sym 4$ of Table \ref{table: M} in the Introduction. We note that these corrections do not affect any of the main results of the paper, neither the strategy methods used in their proofs. To emphasize on this latter point, we have also rephrased Remark \ref{remark: nofields} in the present version of the manuscript. 

\section{Fields of definition of elliptic $k$-curves}
In this section we study fields of definition up to isogeny of certain elliptic curves~$E$ with CM. Namely, those with the property that the CM field is contained in $k$ and such that $E^g$ admits a model up to isogeny over $k$. The main tool that we use is to exploit the fact that, in this setting,~$E$ is an elliptic $k$-curve.

\subsection{Preliminary results on $k$-varieties}\label{section: kvar}
In this section we collect some background and preliminary results on abelian $k$-varieties. In fact, we are mainly interested in the $1$-dimensional case of elliptic $k$-curves. The reader can consult  \cite{Rib94}, \cite{Qu00}, and \cite{Py02} as general references for $k$-curves and $k$-varieties.
\begin{definition}[After Ribet]\label{definition: kcurve}
  An abelian variety $B/\Qbar$ is called an abelian $k$-variety (or just a $k$-variety, for short) if for all $\sigma \in G_k$ there exists an isogeny $\mu_\sigma\colon\acc\sigma B \ra B$ which is compatible with $\End(B)$, in the sense that the diagram
\begin{align}\label{eq: compatibility}
  \xymatrix{
    \acc\sigma B \ar[d]^{\acc\sigma \varphi} \ar[r]^{\mu_\sigma} &B \ar[d]^\varphi\\
   \acc\sigma B \ar[r]^{\mu_\sigma}
&B}
\end{align}
commutes for all $\varphi\in \End(B)$.
\end{definition}
A simple calculation shows that the property of being a $k$-variety only depends on the $\Qbar$-isogeny class of $B$. In  fact, if $K$ is an extension of $k$ one says that $B$ is a $k$-variety defined over $K$ if $B$ is defined over $K$ and $B_\Qbar$ is a $k$-variety.

Let $B$ be a $k$-variety with endomorphism algebra $\cB=\End(B)$, and let $R=Z(\cB)$ denote its center. The variety $B$ has a model defined over a number field. By enlarging this field if necessary, we can assume that it is also a field of definition of the isogenies between $B$ and its conjugates. Thus we can (and do) fix a system of compatible isogenies $\{\mu_\sigma\colon \acc\sigma B\ra B\}_{\sigma\in G_k}$ which is locally constant. For every $\sigma,\tau\in G_k$ define 
\begin{align*}
  c_B(\sigma,\tau)=\mu_\sigma\circ\acc\sigma\mu_\tau\circ \mu_{\sigma\tau}^{-1}.
\end{align*}
A short computation shows that $c_B(\sigma,\tau)$ lies in $R^*$, and that the map \[\sigma,\tau\mapsto c_B(\sigma,\tau)\] is a continuous $2$-cocycle of $G_k$ with values in $R^*$ (considered as a $G_k$-module with trivial action). Denote by $\gamma_B\in H^2(G_k,R^*)$ the cohomology class of $c_B$.  To the best of our knowledge this cohomology class was introduced by Ribet in \cite{Rib92}, and it is one of the main tools for studying the arithmetic of $k$-varieties. The next proposition summarizes some of the main properties of $c_B$ and $\gamma_B$.
\begin{proposition}\label{prop: main props k-variety}
  \begin{enumerate}[i)]
\item If $c_B'$ is a cocycle cohomologous to $c_B$, there exist compatible isogenies $\lambda_\sigma\colon\acc\sigma B\ra B$ such that $c_B'(\sigma,\tau)=\lambda_\sigma\circ\acc\sigma\lambda_\tau\circ\lambda_{\sigma\tau}^{-1}$. 
  \item The cohomology class $\gamma_B\in H^2(G_k,R^*)$ only depends on the $\Qbar$-isogeny class of $B$. More precisely, if $\beta\colon A \ra B$ is an isogeny, then $\gamma_A = \gamma_B$, under the identification of centers $Z(\cB)\simeq Z(\End(A))$ provided by $\beta$.
\item For every $n\in \Z_{\geq 1}$ the isotypical variety $B^n$ is also a $k$-variety, and $\gamma_B=\gamma_{B^n}$, under the identification $Z(\cB)\simeq Z(\cB^n)$.
  \end{enumerate}
\end{proposition}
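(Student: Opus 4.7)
The plan rests on one observation about the compatibility diagram~\eqref{eq: compatibility}: rewriting it as $\mu_\sigma\circ\acc\sigma\varphi\circ\mu_\sigma^{-1}=\varphi$ for every $\varphi\in\End(B)$ shows that the Galois action transported to $\End(B)$ via the $\mu_\sigma$ is \emph{trivial}. This is precisely why $c_B$ ends up in $R^*$ with trivial $G_k$-action (it is also why $c_B(\sigma,\tau)$ is central in the first place, via a short telescoping using compatibility three times), and it will let me freely move any central element through $\acc\sigma(\cdot)$ and $\mu_\sigma$ in what follows.

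For part $(i)$, the hypothesis furnishes a $1$-cochain $\xi\colon G_k\to R^*$ with
\[
c_B'(\sigma,\tau)=\xi(\sigma)\,\xi(\tau)\,\xi(\sigma\tau)^{-1}\cdot c_B(\sigma,\tau).
\]
Setting $\lambda_\sigma:=\xi(\sigma)\circ\mu_\sigma$, centrality of $\xi(\sigma)$ preserves compatibility with $\End(B)$, and a direct substitution into $\lambda_\sigma\circ\acc\sigma\lambda_\tau\circ\lambda_{\sigma\tau}^{-1}$, using the observation above to identify $\mu_\sigma\circ\acc\sigma\xi(\tau)\circ\mu_\sigma^{-1}$ with $\xi(\tau)$, produces $c_B'(\sigma,\tau)$.

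For part $(ii)$, I transport the compatible system through $\beta\colon A\to B$ by defining $\nu_\sigma:=\beta^{-1}\circ\mu_\sigma\circ\acc\sigma\beta\colon\acc\sigma A\to A$. Compatibility of $\nu_\sigma$ with $\End(A)$ is checked by writing $\psi\in\End(A)$ as $\beta^{-1}\varphi\beta$ with $\varphi\in\End(B)$ and invoking the compatibility of $\mu_\sigma$ with $\varphi$. A brief telescoping computation then yields $c_A(\sigma,\tau)=\beta^{-1}\circ c_B(\sigma,\tau)\circ\beta$, which is exactly the image of $c_B(\sigma,\tau)$ under the center-preserving isomorphism $\End(B)\xrightarrow{\sim}\End(A)$, $\varphi\mapsto\beta^{-1}\varphi\beta$. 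Hence $\gamma_A=\gamma_B$.

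For part $(iii)$, equip $B^n$ with the diagonal system $\mu_\sigma^{(n)}:=\diag(\mu_\sigma,\dots,\mu_\sigma)\colon\acc\sigma(B^n)\to B^n$. Under the identification $\End(B^n)\cong\M_n(\cB)$, its compatibility reduces to entrywise compatibility of $\mu_\sigma$, so $B^n$ is a $k$-variety. The resulting cocycle is the scalar matrix $c_B(\sigma,\tau)\cdot I_n$, which is the image of $c_B(\sigma,\tau)$ under the scalar embedding $R=Z(\cB)\hookrightarrow Z(\M_n(\cB))$, giving $\gamma_{B^n}=\gamma_B$. None of the three parts is hard; the only thing to monitor is the order of composition and which Galois conjugate of each morphism appears where, particularly the step in $(i)$ where the central element $\xi(\tau)$ has to be pushed past $\mu_\sigma\circ\acc\sigma(\cdot)$.
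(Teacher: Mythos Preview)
Your proof is correct and follows essentially the same approach as the paper: in each part you choose the same compatible system ($\lambda_\sigma=\xi(\sigma)\circ\mu_\sigma$ in $(i)$, $\nu_\sigma=\beta^{-1}\circ\mu_\sigma\circ\acc\sigma\beta$ in $(ii)$, and the diagonal $\mu_\sigma^{(n)}$ in $(iii)$) and verify the cocycle identity directly. If anything, you are slightly more explicit than the paper in $(i)$, where you spell out that compatibility is what lets $\acc\sigma\xi(\tau)$ be replaced by $\xi(\tau)$ after conjugation by $\mu_\sigma$; the paper's proof leaves this implicit.
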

\begin{proof}
  For $i)$, the cocycle $c_B'$ can be written as $c_B'(\sigma,\tau)=c_B(\sigma,\tau)\circ a_\sigma\circ a_\tau\circ a_{\sigma\tau}^{-1}$ for some $a_\sigma,a_\tau,a_{\sigma\tau}\in R^*$. Then, for every $s\in G_k$ the isogeny $\lambda_s = a_s \circ \mu_s$ is compatible, because $a_s\in R^*$, and it satisfies the required property.

As for $ii)$ one can define isogenies $\lambda_\sigma\colon \acc \sigma A \ra A$ as $\lambda_\sigma=\beta^{-1}\circ \mu_\sigma \circ \acc\sigma \beta$. It is easy to check that they are compatible and that $c_A(\sigma,\tau) = \beta^{-1}\circ c_B(\sigma,\tau)\circ\beta$.

Finally, $iii)$ is proved by considering the isogenies $\mu_\sigma^{\oplus n}\colon \acc\sigma B^n\ra B$, which are compatible and can be thus used to compute $c_{B^n}(\sigma,\tau)$, which is seen to coincide with $c_B^{n\oplus}$.
\end{proof}
\begin{proposition}\label{proposition: nttors}
  Let $B/\Qbar$ be a simple abelian $k$-variety and suppose that $\End(B)$ has Schur index $t$. Suppose that there exists a variety $A/k$ such that $A_\Qbar\sim B^n$. Then $\gamma_B$ lies in $H^2(G_k,R^*)[nt]$; that is, $(\gamma_B)^{nt}=1$. 
\end{proposition}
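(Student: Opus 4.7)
The plan is to first transport $\gamma_B$ to the cocycle associated to $A_\Qbar$ (which is $\Qbar$-isogenous to $B^n$), and then, using that $A$ is itself defined over $k$, apply the reduced norm on the central simple algebra $\End(A_\Qbar)$ to trivialise $c_{A_\Qbar}^{nt}$. By parts $ii)$ and $iii)$ of Proposition~\ref{prop: main props k-variety}, the classes $\gamma_B$, $\gamma_{B^n}$, and $\gamma_{A_\Qbar}$ all coincide in $H^2(G_k,R^*)$ under the canonical identifications of centers, so it suffices to show that $\gamma_{A_\Qbar}^{nt}=1$.

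Write $D=\End(B)$, so that $\End(A_\Qbar)\simeq M_n(D)$ is a central simple algebra over $R$ of reduced degree $nt$ (here one uses that $B$ is simple with Schur index $t$). Since $A$ is defined over $k$, one has $\acc\sigma A_\Qbar=A_\Qbar$ for every $\sigma\in G_k$, and a compatible system of isogenies $\{\lambda_\sigma\}$ may therefore be chosen inside $\End(A_\Qbar)^{\times}=\GL_n(D)$. The compatibility relation $\lambda_\sigma\circ\acc\sigma\varphi=\varphi\circ\lambda_\sigma$, valid for every $\varphi\in\End(A_\Qbar)$, then reads
\[
\acc\sigma\varphi=\lambda_\sigma^{-1}\,\varphi\,\lambda_\sigma,
\]
so that the Galois action on the non-commutative algebra $M_n(D)$ is by inner automorphisms via the $\lambda_\sigma$.

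Now apply the reduced norm $\mathrm{Nrd}\colon M_n(D)^{\times}\to R^{\times}$ to the cocycle identity $\lambda_\sigma\cdot\acc\sigma\lambda_\tau=c_{A_\Qbar}(\sigma,\tau)\cdot\lambda_{\sigma\tau}$. Two standard facts intervene: $\mathrm{Nrd}$ sends a central scalar $r\in R^{\times}$ to $r^{nt}$, and $\mathrm{Nrd}$ is invariant under inner automorphisms, so the previous paragraph gives $\mathrm{Nrd}(\acc\sigma\lambda_\tau)=\mathrm{Nrd}(\lambda_\tau)$. Setting $d(\sigma):=\mathrm{Nrd}(\lambda_\sigma)$, these facts combine to give
\[
c_{A_\Qbar}(\sigma,\tau)^{nt}=d(\sigma)\cdot d(\tau)\cdot d(\sigma\tau)^{-1},
\]
which exhibits $c_{A_\Qbar}^{nt}$ as the coboundary of the continuous $1$-cochain $d$ (recall that $R^*$ carries the trivial $G_k$-action). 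Therefore $\gamma_{A_\Qbar}^{nt}=1$, whence $\gamma_B^{nt}=1$ by the identifications of the first paragraph.

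The crux I anticipate is the conceptual step that the compatibility axiom, once $A_\Qbar$ is viewed as coming from the $k$-model $A$, forces the Galois action on $\End(A_\Qbar)=M_n(D)$ to be given by conjugation by the $\lambda_\sigma$. Once this is seen, applying the reduced norm is the natural device for killing the $nt$-th power of the class, and continuity of $d$ is automatic from the locally constant choice of the $\lambda_\sigma$ fixed at the outset.
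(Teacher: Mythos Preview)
Your proof is correct and follows essentially the same approach as the paper: identify $\gamma_B$ with $\gamma_{A_\Qbar}$ via Proposition~\ref{prop: main props k-variety}, use that the compatible isogenies $\lambda_\sigma$ lie in $\End(A_\Qbar)^\times$ and that compatibility forces ${}^\sigma\varphi=\lambda_\sigma^{-1}\varphi\lambda_\sigma$, and then apply the reduced norm on $\M_n(D)$ to exhibit $c_{A_\Qbar}^{nt}$ as a coboundary. The only cosmetic difference is that the paper first rewrites the cocycle as $c_A(\sigma,\tau)=\alpha_\tau\alpha_\sigma\alpha_{\sigma\tau}^{-1}$ before taking reduced norms, whereas you apply $\mathrm{Nrd}$ directly and invoke its invariance under inner automorphisms; the two computations are equivalent.
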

\begin{proof}
 Let $\{\mu_\sigma\colon \acc\sigma B \ra B\}_{\sigma\in G_k}$ be a compatible system of isogenies and let $c_B(\sigma,\tau)=\mu_\sigma\circ \acc\sigma\mu_\tau\circ\mu_{\sigma\tau}^{-1}$ be the corresponding cocycle. By Proposition \ref{prop: main props k-variety} $ii)$ we have that $A_\Qbar$ is a $k$-variety as well, and therefore there exists a compatible system of isogenies $\alpha_\sigma\colon \acc\sigma A \ra A$ such that 
 \begin{align}\label{eq: alpha_A}
c_A(\sigma,\tau)=\alpha_\sigma\circ \acc\sigma\alpha_\tau\circ\alpha_{\sigma\tau}^{-1}.
 \end{align}
Since $A$ is defined over $k$, we see that $\alpha_s$ lies in $\End(A_\Qbar)$ for every $s\in G_k$. In particular, the compatibility condition applied to the endomorphism $\alpha_\tau$ then reads $\acc\sigma\alpha_\tau= \alpha_\sigma^{-1}\circ \alpha_\tau\circ \alpha_\sigma$. Plugging this into \eqref{eq: alpha_A} gives
\begin{align*}
  c_A(\sigma,\tau)=\alpha_\tau\circ\alpha_\sigma\circ\alpha_{\sigma\tau}^{-1}.
\end{align*}
Now by Proposition \ref{prop: main props k-variety} $ii)$ we have that $\gamma_A=\gamma_{B^n}$; by  $iii)$ we also see that $\gamma_A=\gamma_B$ and by $i)$ we can in fact assume that the cocycles are equal:
\begin{align}\label{eq: cA equals cB}
  c_B(\sigma,\tau)=c_A(\sigma,\tau)=\alpha_\tau\circ\alpha_\sigma\circ\alpha_{\sigma\tau}^{-1},
\end{align}
where the term $c_B(\sigma,\tau)$ is seen as lying in $\End(A_\Qbar)$ by means of the identification $R\simeq Z(\End(A_\Qbar))$. In turn, we can interpret \eqref{eq: cA equals cB} as an equality in $\M_n(\cB)$ thanks to the isomorphism $\End(A_\Qbar)\simeq \M_n(\cB)$. Now we take reduced norms and we use the fact that $c_B(\sigma,\tau)$ lies in $F$, so that its reduced norm equals $c_B(\sigma,\tau)^{nt}$. Thus we obtain:
\begin{align*}
  c_B(\sigma,\tau)^{nt} = \operatorname{nr}(\alpha_\tau)\circ\operatorname{nr}(\alpha_\sigma)\circ \operatorname{nr}(\alpha_{\sigma\tau})^{-1}.
\end{align*}
This expresses $c_B^{nt}$ as the coboundary of the map $s\mapsto \operatorname{nr}(\alpha_s)\in R^*$.
\end{proof}
\begin{corollary}\label{cor: cocycle of order g}
  Suppose that $A/k$ is an abelian variety such that $A_\Qbar\sim C^g$, for some $k$-curve $C/\Qbar$. Then $\gamma_C$ lies in $ H^2(G_k,R^*)[g]$. 
\end{corollary}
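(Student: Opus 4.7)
The plan is to deduce this as a direct special case of Proposition~\ref{proposition: nttors}. The setup there requires a simple abelian $k$-variety $B$, and we wish to apply it to $B = C$ with $n = g$.

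First I would verify the two hypotheses on $C$ that make the proposition applicable. Since by the terminology of \S\ref{section: kvar} a ``$k$-curve'' is a one-dimensional abelian $k$-variety, i.e., an elliptic curve (viewed over $\Qbar$) that is a $k$-variety, $C$ is automatically simple. Moreover, $\End(C)$ is either $\Q$ or an imaginary quadratic field, so in particular it is a commutative field and its Schur index is $t = 1$.

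Next I would invoke Proposition~\ref{proposition: nttors} with $B = C$, $n = g$, and $t = 1$. The existence of $A/k$ with $A_\Qbar \sim C^g$ is exactly the hypothesis of the corollary, so the proposition applies and yields $(\gamma_C)^{g \cdot 1} = 1$ in $H^2(G_k, R^*)$, i.e., $\gamma_C \in H^2(G_k, R^*)[g]$, as desired.

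There is really no obstacle here; the work is entirely in Proposition~\ref{proposition: nttors}. The only thing to check is the conventional point that a ``$k$-curve'' is understood to be an elliptic curve, so $C$ is simple and $\End(C)$ has Schur index $1$.
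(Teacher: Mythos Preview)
Your proposal is correct and follows exactly the paper's approach: the corollary is an immediate application of Proposition~\ref{proposition: nttors} with $B=C$, $n=g$, and $t=1$, the paper's own proof being the single line ``It follows directly from the fact that $\End(C)$ has Schur index $1$.'' Your added remarks that $C$ is simple (being an elliptic curve) and that $\End(C)$ is $\Q$ or an imaginary quadratic field just make explicit why $t=1$.
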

\begin{proof}
  It follows directly from the fact that $\End(C)$ has Schur index $1$.
\end{proof}
\begin{remark}
  Ribet showed \cite[Prop. 3.2]{Rib94} that if $C$ is a $k$-curve without CM then $\gamma_C$ has order dividing $2$. This is not true in general for CM $k$-curves. The point of Corollary \ref{cor: cocycle of order g} is that an analogous statement holds in the case that $C$ has CM and $C^g$ admits a model up to isogeny defined over $k$.
\end{remark}

\begin{proposition}\label{prop: center}
  Let $L\subset \Qbar$ be an extension of $k$ and let  $A/L$ be a $k$-variety. Then $Z(\End(A_\Qbar))\subseteq \End(A)$. 
\end{proposition}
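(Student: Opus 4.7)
The plan is to exploit the compatibility diagram in Definition~\ref{definition: kcurve} for the particular choice where the endomorphism inserted into the square is central, and where the Galois element is taken in the subgroup $G_L \subseteq G_k$. The payoff of restricting to $G_L$ is that the conjugate variety $\acc\sigma A$ coincides with $A$ (since $A$ is already defined over $L$), so the isogenies $\mu_\sigma$ become honest elements of $\End(A_\Qbar)$ rather than merely isogenies between different varieties.

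More concretely, I would proceed as follows. Fix a compatible system $\{\mu_\sigma : \acc\sigma A \to A\}_{\sigma \in G_k}$ witnessing that $A_\Qbar$ is a $k$-variety. Pick $\varphi \in Z(\End(A_\Qbar))$; the goal is to check that $\acc\sigma \varphi = \varphi$ for every $\sigma \in G_L$, since this is exactly the condition for $\varphi$ to lie in $\End(A)$ under our convention. For such a $\sigma$ we have $\acc\sigma A = A$, and hence $\mu_\sigma \in \End(A_\Qbar)$. The compatibility square \eqref{eq: compatibility} then reads
\[
\mu_\sigma \circ \acc\sigma\varphi = \varphi \circ \mu_\sigma,
\]
which (since $\mu_\sigma$ is invertible in the isogeny category) rearranges to $\acc\sigma\varphi = \mu_\sigma^{-1} \circ \varphi \circ \mu_\sigma$. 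Because $\varphi$ is central and $\mu_\sigma \in \End(A_\Qbar)$, the right-hand side equals $\varphi$, yielding $\acc\sigma\varphi = \varphi$ as desired.

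There is essentially no obstacle here; the only point requiring a moment of care is the observation that $\mu_\sigma$ genuinely sits inside $\End(A_\Qbar)$ when $\sigma$ is restricted to $G_L$, which is what makes the compatibility condition force $\acc\sigma \varphi = \varphi$ rather than merely a conjugation relation. All in all, the argument is a one-line application of Definition~\ref{definition: kcurve}, and the statement can be viewed as a structural reason why centers of endomorphism algebras of $k$-varieties automatically descend to the base field of any model.
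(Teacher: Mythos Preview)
Your proof is correct and is essentially identical to the paper's own argument: for $\sigma\in G_L$ one has $\acc\sigma A=A$, so $\mu_\sigma$ lies in $\End(A_\Qbar)$, and then the compatibility relation gives $\acc\sigma\varphi=\mu_\sigma^{-1}\circ\varphi\circ\mu_\sigma=\varphi$ by centrality of $\varphi$. There is nothing to add.
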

\begin{proof}
  For all $\sigma\in G_L$ we have that $\acc\sigma A = A$. Then the isogeny $\mu_\sigma\colon \acc\sigma A \ra A$ can be identified with an element of $\End(A)$. For any $\varphi\in Z(\End(A_\Qbar))$ we have that $\acc\sigma \varphi = \mu_\sigma^{-1}\circ \varphi \circ\mu_\sigma =\varphi$.
\end{proof}

The main examples of $k$-varieties that we will consider in this note are CM elliptic curves whose field of CM is contained in $k$. The following result is well-known (it follows, for instance, from \cite[Thm. 2.2]{Sil94}).
\begin{proposition}\label{proposition: CM kcurve}
Let $E/L$ be an elliptic curve with CM by an imaginary quadratic field $M$.  If $L$ contains $M$, then $E$ is an $M$-curve. In particular, it is a $k$-curve for any $k$ containing $M$.
\end{proposition}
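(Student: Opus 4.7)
The plan is, for each $\sigma \in G_M$, to produce an isogeny $\mu_\sigma \colon \acc\sigma E \to E$ that intertwines the CM actions of $M$, and then to observe that this $M$-equivariance coincides with the compatibility condition of Definition~\ref{definition: kcurve}, since $\End(E) = M$ in the isogeny category.

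I would first fix a CM embedding $\iota_E \colon M \hookrightarrow \End(E)$, identifying $\End(E)$ with $M$. For $\sigma \in G_M$, the conjugate curve $\acc\sigma E$ inherits a CM embedding $\acc\sigma{\iota_E} \colon M \to \End(\acc\sigma E)$ sending $\alpha$ to $\acc\sigma{(\iota_E(\alpha))}$; this is a genuine map out of $M$ (rather than out of a Galois conjugate of $M$) precisely because $\sigma$ fixes $M$ pointwise. Any two elliptic curves over $\Qbar$ with CM by $M$ are isogenous, so some isogeny $\mu_\sigma\colon \acc\sigma E \to E$ exists; by the main theorem of complex multiplication (in the form of \cite[Thm.~2.2]{Sil94}), it may be chosen to satisfy $\mu_\sigma \circ \acc\sigma{\iota_E}(\alpha) = \iota_E(\alpha) \circ \mu_\sigma$ for every $\alpha \in M$.

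Given such a $\mu_\sigma$, the compatibility diagram becomes automatic: any $\varphi \in \End(E)$ has the form $\iota_E(\alpha)$ for some $\alpha \in M$, so $\acc\sigma\varphi = \acc\sigma{\iota_E}(\alpha)$, and the intertwining identity then reads $\mu_\sigma \circ \acc\sigma\varphi = \varphi \circ \mu_\sigma$, which is exactly the commutativity required by \eqref{eq: compatibility}. The ``in particular'' clause follows immediately, since $M \subseteq k$ forces $G_k \subseteq G_M$, so that the isogenies already constructed for $\sigma \in G_M$ restrict to a system exhibiting $E$ as a $k$-curve.

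The only delicate step is choosing $\mu_\sigma$ so that it intertwines the CM actions on the nose, rather than merely up to the complex conjugation automorphism of $M$. Any isogeny $\mu\colon \acc\sigma E \to E$ induces a $\Q$-algebra automorphism of $M$ by $\alpha \mapsto \mu \circ \acc\sigma{\iota_E}(\alpha) \circ \mu^{-1}$, which a priori could be either the identity or complex conjugation, and rescaling $\mu$ by elements of $M^*$ does not change it (since $M$ is commutative); ruling out complex conjugation is precisely the content of \cite[Thm.~2.2]{Sil94}, and reflects the fact that $\sigma \in G_M$ fixes $M$ pointwise, so that conjugation by $\sigma$ transports the CM structure on $E$ to a parallel structure on $\acc\sigma E$ with matching tangent-space action.
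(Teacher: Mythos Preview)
Your proof is correct and unpacks the same citation \cite[Thm.~2.2]{Sil94} that the paper offers as its entire argument. One small remark: the worry in your final paragraph is slightly overstated, since once $\sigma \in G_M$ the compatibility in fact holds for \emph{every} isogeny $\mu\colon \acc\sigma E \to E$, not just a well-chosen one---the tangent-space reasoning you sketch at the end already shows this (both $\iota_E(\alpha)$ and $\acc\sigma{\iota_E}(\alpha)$ act on the respective invariant differentials by the same scalar in $M$, which $\sigma$ fixes, and in characteristic zero a homomorphism of elliptic curves vanishes if and only if it kills the differential), so no special choice of $\mu_\sigma$ is needed.
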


For further reference, we record the following consequence of the results we have seen so far.
\begin{proposition}\label{proposition: centre racional}
  Let $A$ be an abelian surface defined over a number field $k$. Suppose that $A_\Qbar\sim E^2$, where $E$ is an elliptic curve with CM by $M$, and assume that $M\subseteq k$. Then $Z(\End(A_\Qbar))\subseteq \End(A)$.
\end{proposition}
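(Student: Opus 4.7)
The plan is to stitch together the three preliminary results immediately preceding the proposition so that Proposition~\ref{prop: center} can be applied with $L=k$. The key observation is that all the hypotheses needed — that $A/k$ is a $k$-variety in the sense of Definition~\ref{definition: kcurve} — can be transferred from $E$ to $A$ via the isogeny relation $A_\Qbar\sim E^2$, using the fact (built into Proposition~\ref{prop: main props k-variety}) that being a $k$-variety is an invariant of the $\Qbar$-isogeny class.

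First I would fix a field of definition $L$ for $E$ that contains $M$; such an $L$ exists because $E$ is defined over $\Qbar$ and $M$ is a number field. Proposition~\ref{proposition: CM kcurve} then tells us that $E_L$ is an $M$-curve, and since $M\subseteq k$ it is in particular a $k$-curve. Next, Proposition~\ref{prop: main props k-variety}~$iii)$ upgrades this to the statement that $E_L^{\,2}$ is a $k$-variety. Being a $k$-variety depends only on the $\Qbar$-isogeny class (this is the content of the remark following Definition~\ref{definition: kcurve}, and is also implicit in Proposition~\ref{prop: main props k-variety}~$ii)$), so from $A_\Qbar\sim E^2$ we conclude that $A_\Qbar$ is itself a $k$-variety.

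At this stage $A$ is an abelian variety over $k$ (in particular, over the extension $L'=k$ of $k$) whose base change to $\Qbar$ is a $k$-variety. That is precisely the hypothesis of Proposition~\ref{prop: center} applied with ``$L$'' equal to $k$. Invoking that proposition yields $Z(\End(A_\Qbar))\subseteq \End(A)$, which is the desired conclusion.

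There is really no hard step here: the proof is entirely a matter of assembling the preliminary package correctly, and the only thing to be careful about is the choice of a field of definition $L$ for $E$ that contains $M$, so that Proposition~\ref{proposition: CM kcurve} is genuinely applicable before we pass back to the $\Qbar$-isogeny class.
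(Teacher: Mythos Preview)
Your proof is correct and follows essentially the same approach as the paper's: show $E$ is a $k$-curve via Proposition~\ref{proposition: CM kcurve}, transfer this to $A$ via Proposition~\ref{prop: main props k-variety}, and conclude with Proposition~\ref{prop: center}. The paper's argument is simply a terser version of yours; your extra care in choosing a field of definition for $E$ containing $M$ before invoking Proposition~\ref{proposition: CM kcurve} is a reasonable precaution that the paper elides.
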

\begin{proof}
  Since $E$ is a CM elliptic curve defined over an extension of $k$ and $M\subseteq k$ we see that $E$ is a $k$-curve. Now by Proposition \ref{prop: main props k-variety} $A$ is also a $k$-variety, and the result is then a direct consequence of Proposition \ref{prop: center}.\end{proof}
We will use the notion of $k$-varieties completely defined over a field, a terminology which was introduced in \cite[p. 2]{Qu00}.
\begin{definition}\label{def: completely defined}
Let $B$ be a $k$-variety defined over a number field $K$. One says that~$B$ is completely defined over $K$ if there exist compatible isogenies $\{\mu_\sigma\}_{\sigma\in G_k}$ that are defined over $K$.  
\end{definition}
If $B$ is completely defined over $K$ the map 
\begin{align}\label{eq: completely defined}
c_B^K\colon \Gal(K/k)\times \Gal(K/k) \lra    R^*, \ \ \ c_B^K(\sigma,\tau) = \mu_\sigma\circ \acc\sigma \mu_\tau\circ\mu_{\sigma\tau}^{-1}
\end{align}
is a two-cocycle and its cohomology class $\gamma_B^K\in H^2(\Gal(K/k),R^*)$ only depends on the $K$-isogeny class of $B$.
\begin{remark}\label{remark: imageinf} It follows from the definitions that the image of $\gamma_B^K$ under the inflation map $H^2(\Gal(K/k),R^*)\ra H^2(G_k,R^*)$ is precisely the cohomology class $\gamma_B$ defined earlier. In many applications, for instance when one is interested in studying properties of the $K$-isogeny class of $B$, it is important to work with the cohomology class $\gamma_B^K$.
\end{remark}

\subsection{Powers of CM elliptic $k$-curves}\label{section: field def}

Throughout this section, let $E$ be an elliptic curve defined over $\Qbar$ with CM by a quadratic imaginary field $M$ such that~$E^g$ is $\Qbar$-isogenous to an abelian variety~$A$ defined over~$k$. Assume that $M$ is contained in~$k$. Note that, in view of Proposition~\ref{proposition: CM kcurve}, $E$ is a $k$-curve. 

Let $K/k$ denote the smallest extension such that $\End(A_K)=\End(A_\Qbar)$.
The main result of this section is Theorem~\ref{theorem: ggext}, which ensures the existence of an abelian subextension of $K/k$ with Galois group killed by~$g$ over which $E$ admits a model up to isogeny.

\begin{remark}\label{remark: rib} Ribet \cite{Rib94} and Elkies \cite{Elk04} have given two alternative proofs of the fact that any $k$-curve without CM admits a model up to isogeny defined over a polyquadratic extension of $k$. Theorem~\ref{theorem: ggext} may be seen as a extension of Ribet's and Elkies' result to the case  in which $E$ has CM, but under the restrictive assumption that~$E^g$ admits a model up to isogeny over~$k$.
\end{remark}

Before proceeding to the proof of Theorem~\ref{theorem: ggext}, we introduce some notation and we recall Weil's descent criterion, a fundamental result in our arguments.

Note that the definition of $K/k$ implies that $A_K\sim E_1^g$, where $E_1$ is an elliptic curve defined over $K$. Since
$$
\acc\sigma E_1^g\sim \acc \sigma A_K\sim A_K\sim E_1^g\,,
$$
for $\sigma \in G_k$, by Poincar\'e's decomposition theorem there exists an isogeny
$$
\mu_\sigma\colon \acc\sigma E_1\rightarrow E_1
$$
defined over $K$. Note that $E_1$ is a model of $E$ up to isogeny defined over $K$. Even more, according to the terminology of Definition~\ref{def: completely defined}, $E_1$ is an elliptic $k$-curve completely defined over $K$.
Define the 2-cocycle
$$
c_{E_1}\colon G_k\times G_k\rightarrow M^*\,,\qquad c_{E_1}(\sigma,\tau)=\mu_\sigma \circ\acc{\sigma} \mu_{\tau}\circ\mu_{\sigma\tau}^{-1},
$$
and let $\gamma = \gamma_{E_1}$ denote its cohomology class in $H^2(G_k,M^*)$. By Remark~\ref{remark: imageinf}, the class $\gamma$ lies in the image of the inflation map 
\begin{align*}
\operatorname{Inf}\colon H^2(\Gal(K/k),M^*)\lra H^2(G_k, M^*).
\end{align*} 
The following result \cite[Thm. 8.2]{Rib92}, written in the spirit of \cite[Prop. 3.1]{Rib94}, is crucial for our purposes.

\begin{proposition}[Weil's descent criterion]\label{proposition: Weil} Let $F/k$ be an algebraic extension. If $\gamma$ lies in the kernel of the restriction map 
$$
H^2(G_k,M^*)\rightarrow H^2(G_F,M^*)\,,
$$ 
then $E_1$ admits a model up to isogeny defined over $F$. 
\end{proposition}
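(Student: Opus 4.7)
The plan is to deduce this from the classical version of Weil's descent criterion for abelian varieties up to isogeny, which asserts that if we can produce isogenies $\{\mu'_\sigma \colon \acc\sigma E_1 \to E_1\}_{\sigma \in G_F}$ satisfying the \emph{strict} cocycle condition $\mu'_\sigma \circ \acc\sigma \mu'_\tau = \mu'_{\sigma\tau}$, then $E_1$ descends up to isogeny to $F$. Our task is thus to modify the given system $\{\mu_\sigma\}_{\sigma \in G_F}$ so as to kill the cocycle $c_{E_1}$ on the nose, rather than just up to coboundary.

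By hypothesis, the restriction of $\gamma$ to $G_F$ is trivial, so there exists a continuous 1-cochain $a \colon G_F \to M^*$ (where $M^*$ carries the trivial $G_k$-action since $M \subseteq k$) such that
\begin{equation*}
c_{E_1}(\sigma,\tau) \;=\; a_\sigma \cdot a_\tau \cdot a_{\sigma\tau}^{-1}\qquad \text{for all } \sigma,\tau \in G_F.
\end{equation*}
First I would set $\mu'_\sigma := a_\sigma^{-1}\circ \mu_\sigma \colon \acc\sigma E_1 \to E_1$, viewing $a_\sigma \in M^* \subseteq \End(E_1)^*$. Each $\mu'_\sigma$ is still an isogeny, and I claim the strict cocycle condition holds.

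The key computational step is to rewrite $\mu'_\sigma \circ \acc\sigma \mu'_\tau \circ (\mu'_{\sigma\tau})^{-1}$ in terms of $c_{E_1}(\sigma,\tau)$ and the scalars $a_\bullet$. Expanding and using the compatibility condition \eqref{eq: compatibility} of $\mu_\sigma$ with the endomorphism $a_\tau^{-1} \in M^* \subseteq \End(E_1)$, namely $\mu_\sigma \circ \acc\sigma(a_\tau^{-1}) = a_\tau^{-1}\circ \mu_\sigma$, together with the fact that the $a_\bullet$ and their Galois conjugates all lie in the commutative ring $M$ (which Galois fixes pointwise since $M\subseteq k$), one obtains
\begin{equation*}
\mu'_\sigma \circ \acc\sigma \mu'_\tau \circ (\mu'_{\sigma\tau})^{-1} \;=\; a_\sigma^{-1} a_\tau^{-1} a_{\sigma\tau} \cdot c_{E_1}(\sigma,\tau) \;=\; 1
\end{equation*}
for all $\sigma,\tau \in G_F$, by the defining property of $a$.

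The main obstacle, such as it is, is exactly this verification: one must be careful to invoke the compatibility \eqref{eq: compatibility} when sliding ${}^\sigma(a_\tau^{-1})$ past $\mu_\sigma$, and the assumption $M \subseteq k$ is essential to ensure that Galois acts trivially on the coefficients $a_\bullet$ so that the coboundary relation in $H^2(G_F,M^*)$ (trivial action) produces the desired cancellation. Once the strict cocycle identity is in hand, the continuity (local constancy) of $\{\mu'_\sigma\}$ follows from that of $\{\mu_\sigma\}$ and of $a$, and the classical Weil descent criterion applied in the category of abelian varieties up to isogeny yields a model of $E_1$ defined over $F$, as desired.
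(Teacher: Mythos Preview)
Your proof is correct and follows essentially the same approach as the paper: one modifies the isogenies $\mu_\sigma$ by the trivializing $1$-cochain so that the new system satisfies the strict cocycle condition, and then invokes the classical descent criterion (in the paper, \cite[Thm.~8.2]{Rib92}). The only cosmetic difference is that the paper first reduces to the case $F=k$ (i.e., $\gamma=1$) before carrying out the modification, whereas you work directly over $G_F$; the computation is the same either way.
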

\begin{proof}
Of course it is enough to prove that $\gamma=1$, implies that $E_1$ admits a model up to isogeny defined over $k$. But $\gamma=1$ means that there is a locally constant function $d\colon G_k\rightarrow M^*$ such that $c_{E_1}(\sigma\tau)=d(\sigma)d(\tau)/d(\sigma\tau)$. Then the isogenies $\lambda_{\sigma}:=1/d(\sigma)\circ \mu_{\sigma}$ satisfy $\lambda_\sigma \circ\acc{\sigma} \lambda_{\tau}\circ\lambda_{\sigma\tau}^{-1}=1$, and then one concludes by applying \cite[Thm. 8.2]{Rib92}.
\end{proof}
\begin{remark}\label{rk: K-isogenous to a curve over F}
  Let $\gamma^K\in H^2(\Gal(K/k),M^*)$ be the cohomology class constructed as in \eqref{eq: completely defined}. Since $\gamma^K$ is constructed in terms of isogenies that are defined over $K$, the argument above can be adapted to show that if $F/k$ is subextension of $K/k$ and $\gamma^K$ lies in the kernel of the restriction map 
  \begin{align*}
    H^2(\Gal(K/k),M^*)\lra H^2(\Gal(K/F),M^*),
  \end{align*}
then $E_1$ is $K$-isogenous to an elliptic curve defined over $F$.
\end{remark}
\begin{theorem}\label{theorem: ggext}  Let~$E$ be an elliptic curve defined over $\Qbar$ with CM by a quadratic imaginary field~$M$ such that~$E^g$, for $g\geq 1$, is $\Qbar$-isogenous to an abelian variety~$A$ defined over~$k$. Assume that $M$ is contained in~$k$. Let $H$ denote the Hilbert class field of $M$ and write $F=Hk$. Then, $E$ admits a model up to isogeny defined over the abelian subextension $F/k$ of $K/k$ and every element of $\Gal(F/k)$ has order dividing $g$.
\end{theorem}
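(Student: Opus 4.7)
My plan combines classical complex multiplication with the cohomological framework for $k$-varieties developed above. First, working up to isogeny, I replace $E_1$ by an isogenous elliptic curve over $K$ with $\End_{\Kbar}(E_1) = \mathcal{O}_M$ (valid because all relevant endomorphisms and kernels of ideal-torsion subgroups are defined over $K$). The main theorem of CM then gives $j(E_1) \in H$ and the equality $M(j(E_1)) = H$, since the action of $\Cl(M) \cong \Gal(H/M)$ on the set of $j$-invariants of elliptic curves with CM by $\mathcal{O}_M$ is free and transitive. As $j(E_1) \in K$ and $M \subseteq k \subseteq K$, it follows that $H \subseteq K$, so $F = Hk$ is a subextension of $K/k$; the injection $\Gal(F/k) \hookrightarrow \Gal(H/M) \cong \Cl(M)$ shows $\Gal(F/k)$ is abelian. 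An explicit model of $E$ over $F$ is obtained by taking any elliptic curve $E^*/H$ with $j(E^*) = j(E_1)$ (which exists because $j(E_1) \in H$): then $E^*_\Qbar \cong E_{1,\Qbar} \sim E_\Qbar$, and $E^*$ is already defined over $H \subseteq F$.

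For the exponent condition, I introduce $\psi \colon G_k \to \Cl(M)$ by $\psi(\sigma) = [\mathfrak{a}_{\mu_\sigma}]$, where $\mathfrak{a}_{\mu_\sigma}$ is the fractional $\mathcal{O}_M$-ideal associated to $\mu_\sigma \colon \acc\sigma E_1 \to E_1$. Since $M \subseteq k$, the element $\sigma$ acts trivially on fractional ideals of $M$, so $\mathfrak{a}_{\acc\sigma\mu_\tau} = \mathfrak{a}_{\mu_\tau}$; applying the ideal map to $\mu_\sigma \circ \acc\sigma\mu_\tau = c_{E_1}(\sigma,\tau)\mu_{\sigma\tau}$ gives $\mathfrak{a}_{\mu_\sigma}\mathfrak{a}_{\mu_\tau} = (c_{E_1}(\sigma,\tau))\mathfrak{a}_{\mu_{\sigma\tau}}$, proving $\psi$ is a homomorphism. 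Moreover $\psi$ has kernel exactly $G_F$: for $\sigma \in G_F$, $\sigma$ fixes $j(E_1)$, so $\mu_\sigma$ is an isomorphism up to $M^*$, making $\mathfrak{a}_{\mu_\sigma}$ principal; conversely, if $\psi(\sigma) = 0$ then $\mu_\sigma$ is an isomorphism up to $M^*$, so $\sigma$ fixes $j(E_1)$ and hence $M(j(E_1)) = H$, forcing $\sigma$ to fix $F = Hk$. Therefore $\psi$ realizes the natural embedding $\Gal(F/k) \hookrightarrow \Cl(M)$, and in particular $\psi(G_k) = \Gal(F/k)$.

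To finish, apply Corollary \ref{cor: cocycle of order g}: $\gamma^g = 1$ produces a locally constant cochain $d \colon G_k \to M^*$ with $c_{E_1}(\sigma,\tau)^g = d(\sigma)d(\tau)d(\sigma\tau)^{-1}$. Taking ideals, the map $\sigma \mapsto \mathfrak{a}_{\mu_\sigma}^g \cdot (d(\sigma))^{-1}$ becomes a continuous homomorphism from the profinite group $G_k$ into the torsion-free discrete abelian group of fractional ideals of $M$. The only such homomorphism is trivial, so $\mathfrak{a}_{\mu_\sigma}^g = (d(\sigma))$ is principal, i.e., $\psi(\sigma)^g = 0$ in $\Cl(M)$, which gives the claimed exponent bound on $\Gal(F/k) = \psi(G_k)$. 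The principal difficulty I anticipate is the careful identification of $\psi$ with the natural embedding $\Gal(F/k) \hookrightarrow \Cl(M)$, together with the verification that one may arrange $\End_{\Kbar}(E_1) = \mathcal{O}_M$ over $K$; the rest, including the final profiniteness argument, is routine cohomological bookkeeping.
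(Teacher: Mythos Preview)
Your proof is correct and takes a genuinely different route from the paper's. The paper follows Ribet's cohomological template: it decomposes $H^2(G_k,M^*)[g]\simeq H^2(G_k,U)[g]\times \Hom(G_k,P/P^g)$ with $P=M^*/U$, trivializes the free part $\overline\gamma$ over the subfield $L\subseteq K$ fixed by $g$th powers, invokes a separate lemma (Lemma~\ref{lemma: gammabartrivializes}) to kill the torsion part $\gamma_U$ over the same $L$, applies Weil's descent criterion to obtain a model $E_2/L$, and only then brings in CM theory to deduce $F\subseteq kH_c\subseteq L$.

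You instead exploit the ideal--isogeny correspondence from CM theory at the outset. After arranging $\End(E_1)=\mathcal O_M$ over $K$, the map $\psi\colon\sigma\mapsto[\mathfrak a_{\mu_\sigma}]$ is precisely the pushforward of the $1$-cochain $\sigma\mapsto\mu_\sigma$ along $M^*\hookrightarrow I(M)\twoheadrightarrow\Cl(M)$; the cocycle identity for $c_{E_1}$ makes $\psi$ a homomorphism, and CM theory identifies its kernel with $G_F$. Your exponent argument then amounts to observing that $c_{E_1}^g=\partial d$ forces the $I(M)$-valued cochain $\sigma\mapsto\mathfrak a_{\mu_\sigma}^g(d(\sigma))^{-1}$ to be a continuous homomorphism into a torsion-free discrete group, hence trivial. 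This bypasses both Weil descent and the auxiliary Lemma~\ref{lemma: gammabartrivializes}, and never needs to split off the roots of unity. Your approach is more economical and more transparently tied to the arithmetic of $M$; the paper's approach has the virtue of running in exact parallel with Ribet's non-CM argument, which is the comparison the authors wish to emphasize. The replacement step (passing to a $K$-isogenous curve with endomorphisms by the maximal order) is indeed routine: since $M\subseteq K$, all endomorphisms of $E_1$ are defined over $K$, so the Serre tensor construction $E_1\otimes_{\mathcal O_c}\mathcal O_M$ (equivalently, the quotient by the $\mathcal O_c$-stable subgroup corresponding to $\mathcal O_M\cdot\Lambda/\Lambda$) is defined over $K$.
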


\begin{proof} Of course, it is enough to prove the equivalent statement for $E_1$ (where~$E_1$ is attached to $E$ as above). We will follow the strategy of \cite[Thm. 3.3]{Rib94}. Although, Ribet's result is stated under different hypotheses, we will show that~$\gamma=\gamma_{E_1}$ lying in $H^2(G_k,M^*)[g]$ is essentially all that is needed to run his argument. Let~$U$ denote the group of roots of unity\footnote{More explicitly, $U=\{\pm 1\}$, unless $E_1$ has CM by $M=\Q(\sqrt{-1})$ or $\Q(\sqrt{-3})$, in which case $U=\langle \zeta_4\rangle$ or $\langle \zeta_6\rangle$, respectively.} of $M$ and write $P:=M^*/U$. As in \cite[Lemma 3.5]{Rib94}, it follows from Dirichlet's unit theorem that $P$ is a free abelian group. Therefore, the exact sequence
$$
1\rightarrow U \rightarrow M^*\rightarrow P\rightarrow 1
$$
is split. This yields a decomposition
\begin{equation}\label{equation: dec}
H^2(G_k,M^*)[g]  \simeq  H^2(G_k, U)[g]\times \Hom(G_k,P/P ^g)\,.
\end{equation}
Indeed, the long exact cohomology sequence associated to the exact sequence
$$
1\rightarrow P\stackrel{x\mapsto x^g}\longrightarrow P\rightarrow P/P^g\rightarrow 1\,,
$$
together with the freeness of $P$, implies that we have an isomorphism 
$$
H^2(G_k,P)[g]\simeq \Hom(G_k,P/P^g)\,.
$$
Note that by Corollary~\ref{cor: cocycle of order g}, the class~$\gamma$ belongs to $H^2(G_k,M^*)[g]$. Let~$\gamma_{U}$ (resp. $\overline\gamma$) be the projection of $\gamma$ onto the first (resp. second) factor of the decomposition~\eqref{equation: dec}. Since~$\gamma$ is the inflation of a class in $H^2(\Gal(K/k), M^*)$, we see that the component~$\overline\gamma$ lies in the image of the inflation map \begin{align*}
\mathrm{Inf}\colon \Hom(\Gal(K/k), P/P^g)\lra \Hom(G_k,P/P^g).
\end{align*}
In particular, if we let $L$ be the subfield of $K$ fixed by the subgroup of $\Gal(K/k)$ generated by the $g$-th powers of the elements of $\Gal(K/k)$, it is clear that the restriction of $\overline\gamma$ to $G_L$ is trivial. But now, by Lemma~\ref{lemma: gammabartrivializes} below, $\gamma$ is trivial over~$G_L$ and thus, by Proposition~\ref{proposition: Weil}, $E_1$ admits a model~$E_2$ over~$L$ up to isogeny. Let~$\mathcal O_c$ be the order, say of conductor $c$, by which $E_2$ has CM, and let $H_c=M(j(E_2))$ denote the ray class field of conductor $c$ of $M$. Since $E_2$ is defined over $L$, we have
\begin{equation}\label{equation: inclus1 class}
kH_c=k(j(E_2))\subseteq L. 
\end{equation}
Let $\mathcal O$ be the maximal order of $M$ and let $E^*$ be an elliptic curve with CM by~$\mathcal O$ defined over $F=kH$. Note that $E^*$ is $\Qbar$-isogenous to $E_2$. We have
\begin{equation}\label{equation: inclus2 class}
F=k(j(E^*))=kH\subseteq kH_c. 
\end{equation}
The theorem now follows from \eqref{equation: inclus1 class}, \eqref{equation: inclus2 class}, and the fact that $\Gal(L/k)$ is killed by~$g$.
\end{proof}

\begin{lemma}\label{lemma: gammabartrivializes}
With the notations as in the proof of Theorem~\ref{theorem: ggext}, if $\overline \gamma$ trivializes over a subextension $L/k$ of $K/k$, then so does $\gamma_{U}$. That is, if $\overline \gamma$ has trivial restriction to $H^2(G_L,M^*)$, then so does $\gamma_{U}$.
\end{lemma}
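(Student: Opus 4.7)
The plan is to exploit the naturality of the splitting $H^2(G_k,M^*)\simeq H^2(G_k,U)\oplus H^2(G_k,P)$ with respect to restriction maps, and then to combine the hypothesis $\overline{\gamma}|_{G_L}=0$ with the two torsion constraints available: $\gamma$ is killed by $g$ (Corollary~\ref{cor: cocycle of order g}), and the $U$-component is intrinsically killed by $|U|\in\{2,4,6\}$.

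First I would observe that, because $M\subseteq k$, both $U$ and $P$ have trivial $G_k$-action, and the short exact sequence $1\to U\to M^*\to P\to 1$ is split as $G_k$-modules, via the section coming from the fact (used earlier) that $P$ is free. The decomposition $H^2(G_k,M^*)\simeq H^2(G_k,U)\oplus H^2(G_k,P)$ is thus functorial under restriction to any open subgroup, and in particular to $G_L$. Since $P$ is torsion-free we have $H^1(G_L,P)=\Hom(G_L,P)=0$, so the inclusion $H^2(G_L,U)\hookrightarrow H^2(G_L,M^*)$ given by the splitting is injective. Consequently $\gamma|_{G_L}=\gamma_U|_{G_L}\oplus\overline{\gamma}|_{G_L}$ componentwise, and the hypothesis $\overline{\gamma}|_{G_L}=0$ places $\gamma|_{G_L}$ in the image of the first factor.

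Second, I would choose a representing cocycle. Writing $c_{E_1}=c_U\cdot s(c_P)$ for a chosen section $s\colon P\to M^*$, the hypothesis on $\overline{\gamma}$ together with the identification $H^2(G_L,P)[g]\simeq\Hom(G_L,P/P^g)$ lets one modify $c_{E_1}|_{G_L}$ by the coboundary of a lift of a trivializing cochain for $c_P|_{G_L}$; the resulting cocycle $\tilde c$ is cohomologous to $c_{E_1}|_{G_L}$ in $M^*$ and takes values in $U$. Using $\gamma^g=1$ from Corollary~\ref{cor: cocycle of order g}, one writes $\tilde c^g=df$ for some $f\colon G_L\to M^*$; projecting $f$ to $P$ yields a homomorphism $G_L\to P$, which must be trivial by $\Hom(G_L,P)=0$, so $f$ already takes values in $U$. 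Thus $\gamma_U|_{G_L}\in H^2(G_L,U)[\gcd(g,|U|)]$.

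The main obstacle is the final step: promoting this torsion bound to the vanishing $\gamma_U|_{G_L}=0$ when $\gcd(g,|U|)>1$ (which is the only nontrivial situation). Here I would exploit the very definition of $L$ as the fixed field of $\langle \sigma^g:\sigma\in\Gal(K/k)\rangle$, which forces $\Gal(L/k)$ to have exponent dividing $g$, together with the freedom to replace the compatible isogenies $\mu_\sigma$ by $u_\sigma\circ\mu_\sigma$ for $u_\sigma\in U$. The strategy is to show that any obstruction class in $H^2(\Gal(L/k),U)$ arising from $\gamma_U|_{G_L}$ is in fact killed by a judicious choice of such $U$-valued cochain on coset representatives of $\Gal(K/k)/\Gal(K/L)$, using the Kummer-type identification of $H^2(\Gal(L/k),U)[\gcd(g,|U|)]$ in terms of homomorphisms from $\Gal(L/k)$ to $U/U^g$ together with $H^1(\Gal(L/k),P)=0$ applied analogously at finite level. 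Implementing this rigorously is where all the available data (the $k$-variety structure of $E_1$, the explicit description of $L$, and the splitting of $M^*$) need to be combined.
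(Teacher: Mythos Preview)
Your first two paragraphs are fine: the splitting $H^2(G_L,M^*)\simeq H^2(G_L,U)\oplus H^2(G_L,P)$ is natural in the group variable, so under the hypothesis you correctly obtain $\gamma|_{G_L}=\gamma_U|_{G_L}$, an element of $H^2(G_L,U)[\gcd(g,|U|)]$.

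The gap is in the third paragraph, and it is genuine. Knowing only that $\gamma_U|_{G_L}$ lies in $H^2(G_L,U)[\gcd(g,|U|)]$ does not force it to vanish: this group is typically nonzero (for $U=\{\pm1\}$ it is $\Br(L)[2]$). Your proposed ``final step'' conflates two different cohomology groups: you speak of killing an obstruction in $H^2(\Gal(L/k),U)$ by adjusting the $\mu_\sigma$'s, but the quantity you need to annihilate lives in $H^2(G_L,U)$, which is governed by the arithmetic of $L$ and is completely insensitive to the structure of $\Gal(L/k)$ or to how $L$ sits inside $K$. In particular, the specific description of $L$ as the fixed field of the $g$th powers plays no role here (and in any case the lemma is stated for an arbitrary subextension $L/k$ of $K/k$). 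A purely group-cohomological argument using only $\gamma^g=1$ and $|U|\mid 6$ cannot succeed, because those constraints hold for \emph{any} class in $H^2(G_k,M^*)[g]$, and for a generic such class the conclusion is false.

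What you are missing is the geometric origin of $\gamma$. The paper's proof uses that $\gamma$ lies in the image of the connecting map
\[
\delta\colon H^1\bigl(G_k,(M\otimes_\Q\Qbar)^*/M^*\bigr)\longrightarrow H^2(G_k,M^*),
\]
equivalently that $\gamma$ dies in $H^2(G_k,(M\otimes_\Q\Qbar)^*)\simeq H^2(G_k,\Qbar^*)^{\oplus 2}$. This is a genuine piece of information about $E_1$ (coming from the action of $\mu_\sigma$ on invariant differentials) that is not captured by the torsion order alone. Once $\overline\gamma=1$, the class $\gamma_U$ itself must then lie in the kernel of $H^2(G_k,U)\to H^2(G_k,\Qbar^*)^{\oplus 2}$; but since $U\subset k$, the Kummer sequence identifies $H^2(G_k,U)$ with $H^2(G_k,\Qbar^*)[|U|]$, so this kernel is trivial. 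That is the missing idea.
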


\begin{proof} It is enough to prove that if $\overline{\gamma}= 1 $ then $\gamma_U=1$.  To show this, we will follow Ribet again. As proven in \cite[\S 4]{Rib94}, we have
$$
H^1(G_k,(M\otimes_\Q \Qbar)^*)=1, \quad H^2(G_k,(M\otimes_\Q \Qbar)^*)\simeq H^2(G_k,\Qbar^*)\oplus H^2(G_k,\Qbar^*)\,,
$$
where $G_k$ acts trivially on $M$ and via the natural Galois action on $\Qbar$.
 
Then, the exact sequence
$$
1 \rightarrow M^*\ra (M\otimes_\Q \Qbar)^*\ra  (M\otimes_\Q \Qbar)^*/M^*\ra 1,
$$
yields the following long exact sequence in cohomology 
\small
$$
1 \ra H^1(G_k, (M\otimes_\Q \Qbar)^*/M^*)\stackrel{\delta}{\ra} H^2(G_k,M^*)\ra H^2(G_k,\Qbar^*)\oplus H^2(G_k,\Qbar^*)\ra\cdots.
$$\normalsize
As in \cite[Lem. 4.3]{Rib94}, one finds that $\gamma$ lies in the image of $\delta$. Indeed, in our case $M\otimes \Qbar\simeq \Qbar\oplus\Qbar$ as $G_k$-modules. Fix an invariant differential $\omega$ of $E_1$. For every $\sigma\in G_k$ there is a unique $\lambda_\sigma\in\Qbar$ characterized by the identity $\mu_\sigma^*(\omega) = \lambda_\sigma \cdot \acc\sigma\omega$. Then the map $\sigma\mapsto (\lambda_\sigma,\overline\lambda_\sigma)$, where the bar stands for complex conjugation, gives rise to a cohomology class in $H^1(G_k,(M\otimes\Qbar)^*/M)$, whose image under $\delta$ coincides with $\gamma$.

Since $\overline \gamma$ is trivial, this means that $\gamma_{U}$ lies in the kernel of the map 
$$
j\colon H^2(G_k, U)[g]\ra  H^2(G_k,\Qbar^*)\oplus H^2(G_k,\Qbar^*).
$$
Observe that $U$ is contained in $k$ so that the trivial action of $G_k$ on $U$ coincides with the natural Galois action. Since $H^2(G_k, U)\simeq H^2(G_k,\Qbar^*)[n]$ (here $\Qbar$ is endowed with the natural Galois action of $G_k$, and $n$ is the cardinality of $U$), we see that $j$ is injective and thus $\gamma_{U}$ must be trivial.
\end{proof}

\begin{corollary}\label{corollary: class group}
Let $A$ be an abelian variety defined over $\Q$ of dimension $g$ that is $\Qbar$-isogenous to $E^g$, where $E$ is an elliptic curve over $\Qbar$ with CM by $M$. Then every element in $\Cl(M)$ has order dividing $g$.
\end{corollary}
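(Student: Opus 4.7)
My plan is to deduce this corollary as an essentially immediate consequence of Theorem~\ref{theorem: ggext}. The hypothesis of that theorem requires the base field to contain the CM field $M$, so the only preliminary step is to base-change from $\Q$ to $M$.

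More precisely, I would set $k = M$ and consider $A_M$, the base change of $A$ from $\Q$ to $M$. Since $\Qbar$-isogeny is preserved under base change, $A_M$ is still $\Qbar$-isogenous to $E^g$, and by construction $M \subseteq k = M$, so all the hypotheses of Theorem~\ref{theorem: ggext} are in force. The theorem then produces an abelian subextension $F/M$ of $K/M$ such that $E$ admits a model up to isogeny over $F$, and such that every element of $\Gal(F/M)$ has order dividing $g$. The key point is that the theorem identifies this field $F$ explicitly: $F = HM$, where $H$ is the Hilbert class field of $M$. Since $H$ already contains $M$, we have $F = H$, and hence $\Gal(F/M) = \Gal(H/M)$.

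To conclude, I would invoke the fundamental isomorphism from class field theory $\Gal(H/M) \simeq \Cl(M)$. Combining this with the exponent bound $\Gal(F/M)[g] = \Gal(F/M)$ given by the theorem yields that every element of $\Cl(M)$ has order dividing $g$, which is exactly the statement to be proved.

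There is essentially no obstacle here, as all the substantive work has already been done in Theorem~\ref{theorem: ggext}: the control on the exponent of $\Gal(F/k)$ via Ribet's cohomological methods and Weil's descent criterion, and the identification of $F$ as $kH$ via the theory of complex multiplication (passing through the ray class field $H_c$ and the fact that $E$ is $\Qbar$-isogenous to a curve with CM by the maximal order, which is defined over $H$). The corollary is just the specialization of that theorem to $k=M$, where $F/k$ matches the Hilbert class field extension exactly.
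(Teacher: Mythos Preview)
Your proposal is correct and follows essentially the same approach as the paper: base change $A$ to $M$, apply Theorem~\ref{theorem: ggext} with $k=M$ so that $F=Hk=H$, and conclude via the class field theory isomorphism $\Gal(H/M)\simeq\Cl(M)$. This matches the paper's proof almost verbatim.
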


\begin{proof}
Applying Theorem~\ref{theorem: ggext} to the base change $A_M$ of $A$ to $M$, we deduce that every element in $\Gal(H/M)$ has order dividing $g$, where $H$ stands for the Hilbert class field of $M$. The corollary follows from the fact that $\Gal(H/M)$ is isomorphic to $\Cl(M)$.
\end{proof}

\begin{corollary}\label{corollary: classbiquad_new} Let $A$ be an abelian variety defined over $\Q$
that is $\Qbar$-isogenous to~$E^g$, where $E$ is an elliptic curve
defined over $\Qbar$ with CM by a quadratic imaginary field $M$ and
$g$ is a prime number. Then 
$$
\Cl(M)=
\begin{cases}
1,\cyc g,\text{ or }\cyc g \times \cyc g & \text{ if } g=2,\\ 
1\text{ or } \cyc g & \text{ otherwise.}
\end{cases}
$$
In particular, if $g=2$, there are at most $51$ possibilities for $M$. 
\end{corollary}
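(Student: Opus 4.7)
By Corollary~\ref{corollary: class group}, since $g$ is prime, $\Cl(M)$ is an elementary abelian $g$-group, so $\Cl(M) \simeq \cyc{g}^r$ for some integer $r \geq 0$. The task reduces to establishing $r \leq 2$ when $g=2$ and $r \leq 1$ when $g$ is an odd prime; the count of $51$ fields for $g=2$ then follows by combining with the classical enumeration of imaginary quadratic fields of small class number.

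The essential new ingredient beyond Corollary~\ref{corollary: class group} is that $A$ is defined over $\Q$, not merely over $M$. Since $\acc\sigma A = A$ for every $\sigma \in G_\Q$, pulling back the $\Qbar$-isogeny $A_\Qbar \sim E^g$ by $\sigma$ produces an isogeny $A_\Qbar \sim (\acc\sigma E)^g$, whence $E$ and $\acc\sigma E$ are $\Qbar$-isogenous. Consequently $H$ is Galois over $\Q$ with $\Gal(H/\Q) \simeq \Cl(M) \rtimes \Gal(M/\Q)$, where complex conjugation acts on $\Cl(M)$ by inversion; and by Theorem~\ref{theorem: ggext} the minimal field of definition $K$ of $\End(A_\Qbar)$ is Galois over $\Q$ and contains $H$.

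The plan is to revisit the proof of Theorem~\ref{theorem: ggext} with $k = \Q$ in place of $k = M$. Since $\acc\sigma E$ is $\Qbar$-isogenous to $E$ for every $\sigma \in G_\Q$, one attaches to $E$ a cohomology class $\gamma \in H^2(G_\Q, M^*)[g]$, where $M^*$ now carries the natural $G_\Q$-action factoring through complex conjugation on $M$. The exact sequence $1 \to U \to M^* \to P \to 1$ with $P$ torsion-free still splits as abelian groups, and analysis of the resulting decomposition of $H^2(G_\Q, M^*)[g]$, combined with the constraint that the trivialization field of $\gamma$ corresponds via class field theory to (a field containing) $H$, should yield the bound. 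The dichotomy between $g=2$ and $g$ odd prime arises because complex conjugation acts trivially on $\cyc{g}$ when $g=2$ (since $-1\equiv 1$) but by $-1$ otherwise, and this equivariance restricts which characters can arise.

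The main obstacle is making the cohomological splitting compatible with the $\Gal(M/\Q)$-action, since the sequence $1 \to U \to M^* \to P \to 1$ does not a priori split $\Gal(M/\Q)$-equivariantly; an inflation--restriction argument relative to $\Gal(M/\Q)$ combined with the freeness of $P$ should circumvent this. Once the bounds on $r$ are in hand, the count for $g=2$ follows from the classical enumerations: there are exactly $9$ imaginary quadratic fields with $\Cl(M)$ trivial (Heegner--Stark), $18$ with $\Cl(M) \simeq \cyc{2}$, and $24$ with $\Cl(M) \simeq \cyc{2} \times \cyc{2}$ (Watkins and others), yielding $9+18+24=51$ possibilities for $M$.
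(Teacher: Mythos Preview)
Your proposal is a sketch with explicitly acknowledged gaps, not a proof, and the approach you outline does not match the paper's and has a genuine hole in the case $g=2$.

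\textbf{The gap for $g=2$.} Your proposed mechanism is the $\Gal(M/\Q)$-equivariance of the cocycle, exploiting that complex conjugation acts by inversion on $\Cl(M)$. But as you yourself note, inversion is trivial on an elementary abelian $2$-group, so for $g=2$ this equivariance imposes no constraint whatsoever on~$r$. You therefore have no argument for $r\le 2$. The paper obtains $r\le 2$ by an entirely different route: since $\Cl(M)\simeq\Gal(H/M)$ is a quotient of $\Gal(K/M)$, it suffices to know the possible groups $\Gal(K/M)$, and these are classified (finite subgroups of $\PGL_2(M)$, equivalently the Sato--Tate analysis; see Remark~\ref{remark: r(A)}). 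Inspection of that list shows the largest elementary abelian $2$-quotient is $\cyc 2\times\cyc 2$.

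\textbf{The gap for odd $g$.} Even here your argument is only programmatic (``should yield the bound'', ``should circumvent this''). Two concrete issues: first, the cocycle framework in \S\ref{section: kvar} is set up with \emph{trivial} $G_k$-action on $R^*$, so passing to $k=\Q$ with the natural Galois action on $M^*$ requires reworking the definition of $c_E$ and re-proving the analogue of Corollary~\ref{cor: cocycle of order g}; second, you never explain how equivariance under inversion forces $r\le 1$ rather than merely constraining the character. The paper instead invokes the Silverberg and Guralnick--Kedlaya bounds on the $g$-part of $|\Gal(K/M)|$, specialized to the situation $V\simeq W^g$, together with a short analytic/ramification argument to handle the case $M=\Q(\sqrt{-g})$.

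The final enumeration $9+18+24=51$ is correct and matches the paper.
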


\begin{proof}
 By Corollary \ref{corollary: class group}, the group $\Cl(M)$ is a quotient of $\Gal(K/M)$ of the form $\cyc g^r$, for some $r\geq 0$. For $g=2$, the possibilities for $\Gal(K/M)$ are well known (see Remark~\ref{remark: r(A)} in the next section, for example) and it turns out that $r\leq 2$. It is well known that the number of quadratic imaginary fields with class group isomorphic to the trivial group, $\cyc 2$, $\cyc 2 \times  \cyc 2$ are respectively $9$, $18$, $24$ (see Table~\ref{table: Ms} for the complete list).

Suppose now that $g\geq 3$. For $p$ a prime number, define
\begin{align}\label{eq: sil}
r(g,p):=\sum_{j\geq 0} \left\lfloor\frac{2g}{p^j(p-1)}\right\rfloor.
\end{align}
By \cite[Thm. 4.1]{Sil92}, the maximal power of $p$ dividing the order of $\Gal(K/M)$ is $r(g,p)$. Guralnick and Kedlaya \cite[Thm. 1.1]{GK16} have shown that the same holds true if one replaces $r(g,p)$ by
$$
r'(g,p):=\begin{cases}
r(g,p)-g-1 & \text{if $p=2$,}\\
\max\{r(g,p)-1,0\} & \text{if $p$ is a Fermat prime,}\\
r(g,p) & \text{otherwise.}\\
\end{cases}
$$  
This bound is sharp in the context of general abelian varieties (see \cite[Thm. 6.3]{GK16}). However, if~$A$ is as in the statement of the corollary (that is, geometrically isogenous to the power of an elliptic curve with CM), one can do better. Indeed, one can specialize the proof of \cite[Thm. 5.4]{GK16} to the particular case in which $V\simeq W^g$ as $G^0$-modules, as in our setting $A_K\sim E_1^g$ for an elliptic curve $E_1$ defined over $K$. Therefore $\End_{G^0}(W)=M$, $\End_{G^0}(V)=\M_g(M)$, and $Z(\End_{G^0}(W))=M$. In the notations of \cite[Thm. 5.4]{GK16}, this is yields the values
$$
a=2, \qquad b =1, \qquad c=g\,.
$$
Then, one deduces that the maximal power of $g$ dividing the order of $\Gal(K/M)$ is bounded by
$$
r(\GL(bc)_M,g)=r(\GL(g)_M,g):=m(M,g)\left\lfloor{\frac{g}{t(M,g)}}\right\rfloor+\left\lfloor{\frac{1}{t(M,g)}}\right\rfloor\,,
$$
where, as defined in \cite[Rmk. 3.6]{GK16}, we have 
$$
m(M,g):=\min \{ m\geq 1\, | \, M\cap \Q(\mu_{g^m})=M\cap \Q(\mu_{g^\infty})\}
$$
and 
$$
t(M,g):=[\Q(\mu_{g^{m(M,g)}}):M\cap \Q(\mu_{g^{m(M,g)}})]\,.
$$
Since $\Q(\mu_{g^{m(M,g)}})$ ramifies only at $g$, we have that either $t(M,g)=g^{m(M,g)-1}(g-1)$ or $M=\Q(\sqrt{-g})$ and $g\equiv 3 \pmod 4$. In the first case, one trivially checks that $r(\GL(g)_M,g)\leq 1$ (here we use $g\not =2$). In the second case, $\Cl(M)$ is necessarily trivial; indeed, by the reduction theory of positive definite binary quadratic forms (see for example \cite[Chap. I, \S2]{Cox89}) one knows that $\Cl(M)\leq \frac{2}{3} g$. 
\end{proof}

\begin{remark}
There is a finite number of quadratic imaginary fields of fixed class number (see \cite{Hei34}). Using Sage and Pari one can compute the number of imaginary quadratic fields with class group $\cyc g$, for $3\leq g\leq 97$ prime, relying on the bounds for their fundamental discriminants provided by \cite[Table 4]{Wat03}.
\end{remark}

\begin{remark}
Also for $g$ not necessarily prime, one can proceed as in the proof of Corollary~\ref{corollary: classbiquad_new} to gain control on the size of $\Cl(M)$. In general, one obtains that $\Cl(M)$ has exponent dividing $g$ and order dividing $\prod_{p|g} p^{r'(g,p)}$.
\end{remark}

\begin{remark}
One may wonder which of the cases allowed by Corollary~\ref{corollary: classbiquad_new} actually arise. By taking $g$-th powers (resp. Weil restrictions of scalars) of elliptic curves it is clear that all quadratic imaginary fields of class number 1 (resp. with class group~$\cyc g$) do appear. It would be an interesting problem to determine the exact set of possibilities for the $M$ with $\Cl(M)\simeq \cyc 2 \times \cyc 2$ for which an abelian surface~$A$ as in the corollary exists.
\end{remark}

The above corollary answers part $(A)$ of Question~\ref{question: sobre M}. Providing an answer to part $(B)$ of the question for $g=2$ will be the goal of \S\ref{section: dim2}. Before, for the sake of completeness, we state a result analogous to Theorem~\ref{theorem: ggext} for the non-CM case.

\begin{theorem}\label{theorem: nonCMdef}
Let $E$ be an elliptic curve defined over $\Qbar$ without CM such that~$E^g$ is $\Qbar$-isogenous to an abelian variety $A$ defined over~$k$. Then $E$ admits a model up to isogeny defined  
\begin{enumerate}[i)]
\item over $k$ if $g$ is odd; or
\item over a polyquadratic subextension of $K/k$ if $g$ is even. 
\end{enumerate}
\end{theorem}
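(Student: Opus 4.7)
My plan is to mirror the proof of Theorem~\ref{theorem: ggext} with $M$ replaced by $\Q$ (since $\End(E)=\Q$ in the non-CM case), using as additional input Ribet's bound $\gamma_E^2=1$ for non-CM $k$-curves. As a first step, I would attach to $E$ a model $E_1/K$ completely defining it as a $k$-curve (coming from $A_K\sim E_1^g$ and Poincar\'e's decomposition) and form the cohomology class $\gamma:=\gamma_{E_1}\in H^2(G_k,\Q^*)$ (valued in $\Q^*$ since $\End(E_1)=\Q$). By Corollary~\ref{cor: cocycle of order g} one has $\gamma^g=1$, and by \cite[Prop.~3.2]{Rib94} (cited in the remark following Corollary~\ref{cor: cocycle of order g}) one has $\gamma^2=1$; together, $\gamma$ is killed by $\gcd(g,2)$.

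For part (i), $g$ odd gives $\gamma=1$ and Weil's descent criterion (Proposition~\ref{proposition: Weil}) immediately provides a model of $E_1$ up to isogeny over $k$.

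For part (ii), with $g$ even and $\gamma\in H^2(G_k,\Q^*)[2]$, I would replay the decomposition argument in the proof of Theorem~\ref{theorem: ggext} with exponent~$2$. Splitting $\Q^*=U\times P$ with $U=\{\pm 1\}$ and $P=\Q^*/U$ free abelian (by the Dirichlet-unit-style argument of \cite[Lem.~3.5]{Rib94}) yields
$$
H^2(G_k,\Q^*)[2]\;\simeq\;H^2(G_k,U)[2]\times\Hom(G_k,P/P^2).
$$
Since $\gamma$ inflates from $\Gal(K/k)$, so does its $\Hom$-component $\overline\gamma$. Letting $L\subseteq K$ be the fixed field of the subgroup of $\Gal(K/k)$ generated by squares, $L/k$ is a polyquadratic subextension of $K/k$ and $\overline\gamma|_{G_L}=1$. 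The analog of Lemma~\ref{lemma: gammabartrivializes} with $M=\Q$ then forces $\gamma_U|_{G_L}=1$ as well, so $\gamma|_{G_L}=1$, and Proposition~\ref{proposition: Weil} produces a model of $E_1$ up to isogeny over $L$.

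The main obstacle I foresee is verifying that Lemma~\ref{lemma: gammabartrivializes} specializes cleanly to $M=\Q$, but in fact the argument simplifies: the decomposition $M\otimes_\Q\Qbar=\Qbar\oplus\Qbar$ used there collapses to a single copy $\Q\otimes_\Q\Qbar=\Qbar$; one still has $H^1(G_k,\Qbar^*)=1$ by Hilbert 90; the description of $\gamma$ as $\delta$ applied to the class of $\sigma\mapsto\lambda_\sigma\in\Qbar^*/\Q^*$ (where $\mu_\sigma^*\omega=\lambda_\sigma\cdot\acc\sigma\omega$ for a fixed invariant differential $\omega$) still applies, now without the complex-conjugate twin; and the injectivity of the comparison map $H^2(G_L,U)\hookrightarrow H^2(G_L,\Qbar^*)$ (via $H^2(G_L,U)\simeq H^2(G_L,\Qbar^*)[2]$) still concludes, giving $\gamma_U|_{G_L}=1$. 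Hence no new ingredients are required to adapt the CM proof.
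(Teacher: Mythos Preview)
Your proposal is correct and follows essentially the same approach as the paper. The paper's own proof is more compressed: it cites $\gamma^g=1$ and $\gamma^2=1$ exactly as you do, dispatches part~(i) via Proposition~\ref{proposition: Weil}, and for part~(ii) simply asserts that ``it is straightforward to adapt the proof of Theorem~\ref{theorem: ggext}''; you have carried out precisely this adaptation in detail, including the specialization of Lemma~\ref{lemma: gammabartrivializes} to $M=\Q$.
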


\begin{proof}
Let $\gamma$ denote the cohomology class attached to $E$. On the one hand, by Proposition~\ref{proposition: nttors}, the class $\gamma^g$ is trivial. On the other hand, \cite[Prop. 3.2]{Rib94} asserts that $\gamma$ has order dividing $2$.  Proposition~\ref{proposition: Weil} implies then that $i)$ holds. It is straightforward to adapt the proof of Theorem~\ref{theorem: ggext} to justify that the polyquadratic extension over which $\gamma$ trivializes is in fact contained in $K/k$.
\end{proof}

\section{The case of dimension $g=2$}\label{section: dim2}

In this section, $E$ will denote an elliptic curve defined over $\Qbar$ with CM by a quadratic imaginary field $M$ such that~$E^2$ is $\Qbar$-isogenous to an abelian surface~$A$ defined over~$k$. We assume that $M$ is contained in~$k$, so that $E$ is a $k$-curve, and we let $K/k$ denote the smallest extension such that $\End(A_K)=\End(A_\Qbar)$. 

Our goal is to obtain a more precise version of Corollary~\ref{corollary: classbiquad_new}. This will be achieved by Theorem~\ref{theorem: absurfQ}. We will benefit from the fact that for $g=2$, the structure of $\Gal(K/k)$ is very well understood.

\begin{remark}\label{remark: r(A)}
It is well known that if $M\subseteq k$ the possibilities for $\Gal(K/k)$ are the cyclic group $\cyc n$ for $n\in \{1,2,3,4,6\}$, the dihedral group~$\dih n$ for $n\in \{2,3,4,6\}$, the alternating group $\alt 4$, and the symmetric group $\sym 4$.
There are at least two ways in which this can be deduced: first, by means of the analysis of the finite subgroups of $\PGL_2(M)$, (see \cite{Bea10}, \cite{CF00}); second, as a byproduct of the classification of Sato--Tate groups of abelian surfaces (see \cite[\S4]{FKRS12}). 
\end{remark}

Let $r(A)$ be defined in terms of $\Gal(K/k)$ as specified on Table~\ref{table: rA}. In other words, $r(A)$ is the maximum value of $r$ for which there exists a Galois subextension $F/k$ of $K/k$ with $\Gal(F/k)\simeq \cyc 2^r$.

\begin{table}
\begin{center}
\footnotesize
\setlength{\extrarowheight}{0.5pt}
\begin{tabular}{ccc|ccc}
\hline
$r(A)$ & $\Gal(K/k)$ & \qquad &  \quad  & $r(A)$ & $\Gal(K/k)$\\
\hline 
 0 & $\cyc 1$ & &  & 2 & $\dih 2$ \\
 1 & $\cyc 2$ & &  & 1 & $\dih 3$  \\
 0 & $\cyc 3$ & &  & 2 & $\dih 4$ \\
 1 & $\cyc 4$ & &  & 2 & $\dih 6$ \\
 1 & $\cyc 6$ & &  & 0 & $\alt 4$ \\
   & & &  & 1 & $\sym 4$ \\
\hline
\end{tabular}
\vspace{6pt}
\caption{The rank $r(A)$ in terms of $\Gal(K/k)$.}\label{table: rA}
\end{center}
\end{table}

\begin{proposition}\label{proposition: biquad}
Let $E$ be an elliptic curve defined over $\Qbar$ with CM by a quadratic imaginary field $M$ such that $E^2$ is $\Qbar$-isogenous to an abelian surface $A/k$. Assume that $M\subseteq k$. Then $E$ admits a model up to isogeny $E^*$ defined over a Galois subextension $F/k$ of $K/k$ such that $\Gal(F/k)\simeq \cyc 2^{r(A)}$.
\end{proposition}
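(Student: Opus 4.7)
The proof will recycle the cohomological setup of Theorem \ref{theorem: ggext} specialized to $g=2$, stopping one step earlier than there: the intermediate field $L$ appearing in that argument will already be the $F$ we seek. Concretely, with $E_1/K$ the model of $E$ coming from $A_K\sim E_1^2$ and $\gamma=\gamma_{E_1}\in H^2(G_k,M^*)$ its associated class, Corollary~\ref{cor: cocycle of order g} gives $\gamma^2=1$, so the decomposition \eqref{equation: dec} with $g=2$ writes $\gamma=\gamma_U\cdot\bar\gamma$ with $\bar\gamma\in\Hom(G_k,P/P^2)$. As $\gamma$ inflates from $H^2(\Gal(K/k),M^*)$, the homomorphism $\bar\gamma$ is itself the inflation of some $\bar\gamma^K\in\Hom(\Gal(K/k),P/P^2)$.

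Next I would define $F$ as the fixed field of the subgroup $N\leq\Gal(K/k)$ generated by all squares. An elementary check (from $x^2,y^2,(xy)^2\in N$ one derives $xy\equiv yx\pmod{N}$) shows that $\Gal(F/k)=\Gal(K/k)/N$ is always an elementary abelian $2$-group, and it is manifestly the \emph{maximal} such quotient of $\Gal(K/k)$. By the definition of $r(A)$ this forces $\Gal(F/k)\simeq\cyc 2^{r(A)}$; the entries of Table~\ref{table: rA} are simply the result of computing this maximal elementary abelian $2$-quotient for each of the groups listed in Remark~\ref{remark: r(A)}. In particular $F/k$ is Galois and abelian, contained in $K/k$, and of the required shape.

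Finally I would descend. Since $P/P^2$ is an elementary abelian $2$-group, every homomorphism from $\Gal(K/k)$ to it factors through $\Gal(K/k)/N=\Gal(F/k)$; hence $\bar\gamma^K$ restricts trivially to $\Gal(K/F)$, and inflating gives $\bar\gamma|_{G_F}=1$. Applying Lemma~\ref{lemma: gammabartrivializes} to $F$ in place of $L$ yields $\gamma_U|_{G_F}=1$, so $\gamma|_{G_F}=1$, and Weil's descent criterion (Proposition~\ref{proposition: Weil}) produces a model $E^*$ of $E_1$ — hence of $E$ — up to isogeny defined over $F$, completing the proof.

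There is really no substantive obstacle: the only piece of bookkeeping is the table check in the second step, i.e.\ verifying that the maximal elementary abelian $2$-quotient of each group from Remark~\ref{remark: r(A)} has rank matching the value of $r(A)$ recorded in Table~\ref{table: rA}. The conceptual content — turning $\gamma$ into $\bar\gamma$, trivializing $\bar\gamma$ over $F$, and bootstrapping via Lemma~\ref{lemma: gammabartrivializes} — is already packaged in the proof of Theorem~\ref{theorem: ggext}.
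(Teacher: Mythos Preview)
Your proof is correct and is essentially the direct deduction from Theorem~\ref{theorem: ggext} that the paper itself acknowledges just before its own proof (``The above proposition can certainly be deduced from Theorem~\ref{theorem: ggext} and Remark~\ref{remark: r(A)}\ldots''). You rerun the cohomological argument of that theorem with $g=2$, stopping at the field $L$ fixed by squares and checking from Table~\ref{table: rA} that this $L$ realizes $\cyc 2^{r(A)}$; this is exactly right, and your invocation of Lemma~\ref{lemma: gammabartrivializes} over that field is legitimate.

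The paper deliberately takes a different route: instead of Lemma~\ref{lemma: gammabartrivializes}, it identifies $H^2(G_k,U)[2]$ with $\Br(k)[2]$ and invokes Merkur'ev's theorem to split $\gamma_U$ over an auxiliary polyquadratic extension $k_2/k$ (not contained in $K$ a priori). It then brings CM theory back in, passing to a curve $E^*$ with CM by the maximal order and using the inclusion of the Hilbert class field in the ray class fields to land in $F:=k(j(E^*))\subseteq K\cap k_2$. What this buys is Remark~\ref{rk: field of def}: the model $E^*$ can be taken to be \emph{any} curve with CM by the maximal order, and $F$ can be taken to be the concrete field $kH$. This identification is used repeatedly downstream (e.g.\ in the proof of Theorem~\ref{theorem: absurfQ} and throughout \S\ref{sec: restriction of scalars}). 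Your argument, by contrast, produces the maximal elementary abelian $2$-subextension of $K/k$ as $F$, which is in general larger than $kH$; this suffices for the proposition as stated but does not by itself pin down $E^*$ or $F$ in the explicit form the later sections rely on.
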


The above proposition can certainly be deduced from Theorem~\ref{theorem: ggext} and Remark~\ref{remark: r(A)}, but we wish to present a slightly different argument that renders, for $g=2$, a shortcut in the proof.

\begin{proof}
Recover decomposition \eqref{equation: dec}, specialize it to $g=2$, and resume with the final argument in the proof of \cite[Thm. 3.3]{Rib94}. If $\Br(k)$ denotes the Brauer group of $k$, then we have
$$
H^2(G_k,M^*)[2]\simeq \Br(k)[2]\times \Hom(G_k,P/P^2)\,.
$$
If we let $\gamma=\gamma_{E}$ denote the cohomology class attached to $E$ and $(\gamma_U, \overline\gamma)$ its components under the above decomposition, then $\overline\gamma$ factors through a polyquadratic extension $k'$ of $k$. Moreover, a theorem of Merkur'ev \cite{Mer81} shows that $\Br(k)[2]$ is generated by the classes of quaternion algebras over~$k$, and it is well known that each quaternion algebra over~$k$ is split over a quadratic extension of~$k$. Proposition~\ref{proposition: Weil} thus tells us that $E$ admits a model $E_2$ up to isogeny over a polyquadratic extension $k_2/k$.

Now let $E_1/K$ be an elliptic curve such that $A_K\sim E_1^2$. We thus have that
\begin{equation}\label{equation: fieldinc}
\Q(j(E_1))\subseteq K\quad\text{and}\quad\Q(j(E_2))\subseteq k_2\,.
\end{equation}  
We can now conclude in a similar manner as we did with the proof of Theorem~\ref{theorem: ggext}. Let $\mathcal O$ be the maximal order of $M$ and let $E^*$ be an elliptic curve with CM by~$\mathcal O$. Both $E_1$ and $E_2$ are $\Qbar$-isogenous to $E^*$. Let $\mathcal O_i$ denote the order by which $E_i$ has CM, for $i=1, 2$, and let $c_i$ be its conductor. Since the ray class field $H_{c_i}=M(j(E_i))$ of conductor $c_i$ contains the Hilbert class field $H=M(j(E^*))$, we have that $M(j(E^*))\subseteq M(j(E_i))$. It follows from (\ref{equation: fieldinc}) that 
$$
F:=k(j(E^*))\subseteq K\cap k_2\,.
$$
Since $[K\cap k_2: k]\leq 2^{r(A)}$, we have that $E^*_F$ is a model up to isogeny of $E$ enjoying the properties stated in the proposition.
\end{proof}

\begin{remark}\label{rk: field of def} It follows from the above proof that any elliptic curve with CM by the maximal order of $M$ can be taken as the model $E^*$ of Proposition~\ref{proposition: biquad}, in which case $F$ can be taken equal to $k(j(E^*))$. 
\end{remark}

\begin{remark}
If $E$ does not have CM, then the possibilities for $\Gal(K/k)$ are those of Remark~\ref{remark: r(A)} excluding $\alt 4$ and $\sym 4$. Theorem~\ref{theorem: nonCMdef} has then the following consequence. Let $E$ be an elliptic curve without CM defined over $\Qbar$ such that~$E^2$ has a model up to isogeny defined over~$k$. Then~$E$ admits a model up to isogeny~$E^*$ defined over a biquadratic subextension $F/k$ of $K/k$.
\end{remark}

In the following \S\ref{section: galstr} and \S\ref{sec: restriction of scalars}, we will gather constraints on the field $M$ imposed by $\Gal(K/M)$. In \S\ref{section: imquad}, we will achieve a refinement of Corollary~\ref{corollary: classbiquad_new} by putting together Proposition~\ref{proposition: biquad} and all of these constraints.

\subsection{Galois module structures: group representation obstructions}\label{section: galstr} We continue with the same notation for $A$, $E$, $K/k$, and $M$. By Proposition~\ref{proposition: biquad},~$E$ admits a model up to isogeny $E^*$ defined over a subextension $F/k$ of $K/k$. We will consider the following hypotheses: 

\begin{itemize}
\item[$(H_4)$] $\Gal(K/F)$ contains an element $\tau$ of order $4$.
\item[$(H_6)$] $\Gal(K/F)$ contains an element $\tau$ of order $6$.
\item[$(\alt 4)$] $\Gal(K/F)$ contains $\alt 4$.
\item[$(\sym 4)$] $\Gal(K/F)$ contains $\sym 4$ (equivalently, $\Gal(K/F)\simeq \sym 4$).
\end{itemize}

Denote by $L/F$ the minimal extension over which $\Hom(E_{L}^*,A_L)\simeq \Hom(E_{\Qbar}^*,A_{\Qbar})$. Observe that~$K$ is contained in $L$.
The main result of this section is the following proposition, which imposes conditions on $M$ in terms of $\Gal(K/F)$ and $\Gal(L/F)$.
\begin{proposition}\label{proposition: M} One has:
\begin{enumerate}[i)]
\item If $(H_4)$ holds, then $M=\Q(\sqrt{-1})$ or $M=\Q(\sqrt{-2})$.
\item If $(H_6)$ holds, then $M=\Q(\sqrt{-3})$.
\item If $(\alt 4)$ holds, then $M$ splits the quaternion algebra $(-1,-1)_\Q$. \footnote{Recall that $M=\Q(\sqrt{-d})$, with $d$ square free, splits $(-1,-1)_\Q$ if and only if $d\not\equiv 7 \mod 8$.}
\item If $(\sym 4)$ holds, then:
\begin{itemize}
\item $M=\Q(\sqrt{-2})$ if and only if $\Gal(L/F)$ is isomorphic to the binary octahedral group $\mathrm{B_O}$ or the group $\GL_2(\Z/3\Z)$. 
\item $M=\Q(\sqrt{-1})$ if and only if $\Gal(L/F)$ is isomorphic to the complex reflexion group $G_8$.
\end{itemize}
\end{enumerate}
\end{proposition}

The proof is built on some results of \cite{FS14}, which we now recall. We note also that the proofs of statements $i)$ and $ii)$, and of $iii)$ and $iv)$ require slightly different techniques, so we will treat them separately. First, let us introduce some notations and make some basic observations. 

\begin{lemma}\label{lemma: fieldL}
One has that $\Gal(L/K)\subseteq\cyc n$, where $n$ is the number of roots of unity in $M$. 
\end{lemma}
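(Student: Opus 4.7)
The plan is to set $V := \Hom(E^*_{\Qbar}, A_{\Qbar})$ and to identify $\Gal(L/K)$ with a finite subgroup of $M^*$ acting on $V$ by scalar multiplication; since the torsion subgroup of $M^*$ is precisely the cyclic group of roots of unity in $M$, of order $n$, the claim will follow at once.

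First I would note that, by the very definition of $L$, the action of $\Gal(L/F)$ on $V$ is faithful, and a fortiori so is the action of the subgroup $\Gal(L/K)$. The space $V$ is free of rank $2$ over $M$ (via the $\Qbar$-isogeny $A_{\Qbar}\sim (E^*)^2$), and carries two commuting algebra actions: on the left, $\End(A_{\Qbar})\simeq \M_2(M)$ by post-composition, under which $V$ is the unique simple left $\M_2(M)$-module; on the right, $\End(E^*_{\Qbar}) = M$ by pre-composition. The crux is to check that $\Gal(L/K)$ commutes with both actions. Equivariance for the left action follows from $\End(A_{\Qbar}) = \End(A_K)$, which is the defining property of $K$. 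Equivariance for the right action follows from $M\subseteq k\subseteq F$ together with the fact that $E^*$ is defined over $F$, so that all CM endomorphisms of $E^*$ are fixed by $\Gal(L/F)$. In symbols, $\sigma(\alpha\circ\phi\circ m) = \alpha\circ\sigma(\phi)\circ m$ for every $\sigma\in\Gal(L/K)$, $\alpha\in\End(A_K)$, $\phi\in V$, and $m\in M$.

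A direct application of Schur's lemma then identifies the group of $(\M_2(M), M)$-bilinear automorphisms of $V$ with $M^*$, acting by scalars. Combining this with the faithfulness of the Galois action produces an injection $\Gal(L/K)\hookrightarrow M^*$ whose image, being finite, is forced to lie in the group of roots of unity of $M$, which is cyclic of order $n$. The only real care required is the simultaneous bookkeeping of the two equivariance properties; once those are in place, the result drops out immediately and no deep obstacle remains.
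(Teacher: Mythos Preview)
Your argument is correct. The paper takes a slightly more concrete route: it introduces an elliptic curve $E_1/K$ with $A_K\sim E_1^2$, notes that $L/K$ is then the minimal extension over which $E_1$ and $E^*_K$ become isogenous, and reads off a character $\xi\colon G_K\to M^*$, $\xi(\sigma)=\psi^{-1}\circ{}^\sigma\psi$, directly from a single isogeny $\psi\colon E_{1,\Qbar}\to E^*_\Qbar$ (this is a homomorphism because $G_K$ acts trivially on $M=\End(E_1)$). Your approach avoids introducing $E_1$ by working with the full rank-$2$ module $V=\Hom(E^*_\Qbar,A_\Qbar)$ and invoking Schur's lemma for the $\M_2(M)$-action; this is a bit more structural, while the paper's version is a bit more hands-on, but both rest on exactly the same inputs (that $G_K$ fixes $\End(A_\Qbar)$ and that $G_F$ fixes $\End(E^*_\Qbar)$) and neither has a real advantage here. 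Incidentally, the right $M$-action you track is not strictly needed: $M$-linearity already follows from $\M_2(M)$-linearity via the center, so Schur alone on the left action suffices.
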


\begin{proof}
First observe that $L/F$ is a Galois extension by the minimality condition defining it. Let~$E_1$ be an elliptic curve defined over~$K$ such that $A_K\sim E_1^2$. Then $L/F$ may be characterized by the fact that $L/K$ is the minimal extension over which $E_1$ and $E_K^*$ become isogenous. Let $\psi\colon E_{1,\Qbar}\rightarrow E^*_\Qbar$ be an isogeny. Since $G_K$ acts trivially on $M=\End(E_1)$, the map
$$
\xi\colon G_K\rightarrow M^*\\,\qquad \xi(\sigma):=\psi^{-1}\circ \acc \sigma \psi
$$
is a homomorphism. But then
$$
\Gal(L/K)\simeq G_K/\ker(\xi)\simeq \im(\xi)\subseteq \cyc n\,,
$$
as desired.
\end{proof}

Observe that the action of $M\simeq Z(\End(A_\Qbar))$ on $\Hom(E_{L}^*,A_L)$ commutes with that of~$G_F$, since $Z(\End(A_\Qbar))\subseteq \End(A)$ by Proposition~\ref{proposition: centre racional}.
By letting~$G_{F}$ act naturally on the first factor and trivially on~$M$, the tensor product
$$
\Hom(E_{L}^*,A_L)\otimes_{M,\iota}M\qquad (\text{resp.}\quad\End(A_L)\otimes_{M,\iota}M)\,,
$$
taken with respect to any of the two automorphisms $\iota\in \{\id,c\}$ of $M$, becomes a $M[\Gal(L/F)]$-module of dimension $2$ (resp. $4$). Here, $c$ denotes the non-trivial automorphism\footnote{To ease notation, for $a \in M$, we will simply write $\overline a$ to denote $\acc c a$.} of $M$.
Let $\theta_{\iota}(E^*)$ (resp. $\theta_{\iota}(A)$) denote the representation afforded by this module. 
 
Let $\pi\colon\Gal(L/F)\rightarrow \Gal(K/F)$ be the natural projection.
Note that $\theta_\iota(E^*)$ is (by the definition of $L/F$) a faithful representation of $\Gal(L/F)$, whereas $\theta_\iota(A)$ is only faithful (by the definition of $K/k$) as a representation of $\Gal(K/F)$. The following lemma is a restatement of \cite[Prop. 3.2]{FS14} and \cite[Prop. 3.4]{FS14} in our setting.

\begin{lemma}\label{lemma: FS14} One has:
\begin{enumerate}[i)]
\item $\Tr(\theta_\iota(A))=\Tr(\theta_{\id}(E^*))\cdot
\Tr(\theta_{c}(E^*))\in \Q$. Thus, $\theta_{\id}(A)\simeq\theta_{c}(A)=:\theta(A)$.
\item For $\sigma \in \Gal(L/F)$, let $r$ denote the order of $\pi(\sigma)$. Then $r$ is $1$, $2$, $3$, $4$, or $6$, and
$$
\Tr(\theta(A)(\sigma))=2+\zeta_r+\overline \zeta_r\,.
$$
\end{enumerate}
\end{lemma}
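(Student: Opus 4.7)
The plan is to exploit the isogeny $A_L\sim (E_L^*)^2$ to write $V:=\Hom(E_L^*,A_L)$ and $V^*:=\Hom(A_L,E_L^*)$ as free rank-$2$ $M$-modules and $\End(A_L)\simeq V\otimes_M V^*$ as $M$-bimodules (the isomorphism being induced by composition). Since $M\subseteq F$, the group $\Gal(L/F)$ acts trivially on $\End(E_L^*)=M$, so its action on $V$ and $V^*$ is $M$-linear; Proposition~\ref{proposition: centre racional} ensures that it also acts trivially on the center of $\End(A_L)$, so the action on $\End(A_L)$ is $M$-linear as well. Thus $\theta_\iota(E^*)$ and $\theta_\iota(A)$ are honest $M$-representations of $\Gal(L/F)$.

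For part~(i), the key step is to identify $V^*$ with a twist of $V$. Using principal polarizations on $E_L^*$ and $A_L$, the dualization map $\phi\mapsto\phi^\vee$ produces a $\Gal(L/F)$-equivariant $\Q$-linear isomorphism $V\xrightarrow{\sim}V^*$; and since the Rosati involution of a CM elliptic curve restricts to complex conjugation on $M$, this isomorphism intertwines the $M$-structure on $V$ with the $c$-twisted $M$-structure on $V^*$. Equivalently, $V^*\simeq V\otimes_{M,c}M$ as $M[\Gal(L/F)]$-modules. Writing $A_\sigma\in\GL_2(M)$ for the matrix of $\sigma$ on $V$, the matrix on $V^*$ is then $\overline{A}_\sigma$, and the decomposition $\End(A_L)\simeq V\otimes_M V^*$ yields
\[
\Tr\bigl(\theta_{\id}(A)(\sigma)\bigr)=\Tr(A_\sigma)\cdot\Tr(\overline{A}_\sigma)=\Tr\bigl(\theta_{\id}(E^*)(\sigma)\bigr)\cdot\Tr\bigl(\theta_c(E^*)(\sigma)\bigr).
\]
The right-hand side equals $N_{M/\Q}\bigl(\Tr(\theta_{\id}(E^*)(\sigma))\bigr)\in\Q$; in particular $\Tr(\theta_{\id}(A))=\Tr(\theta_c(A))$, whence $\theta_{\id}(A)\simeq\theta_c(A)$ by semisimplicity.

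For part~(ii), $\theta(A)$ factors through $\Gal(K/F)$ via $\pi$ by the definition of~$K$. The action of $\pi(\sigma)$ on $\End(A_L)\simeq\M_2(M)$ is an $M$-algebra automorphism fixing the center, so by Skolem--Noether it is inner conjugation by some $P_\sigma\in\GL_2(M)$, defined modulo~$M^*$. If $\lambda_1,\lambda_2\in\overline{M}$ denote the eigenvalues of $P_\sigma$, those of its adjoint action on $\M_2$ are $\{\lambda_i/\lambda_j\}_{i,j}$, namely $1,1,\zeta,\zeta^{-1}$ with $\zeta:=\lambda_1/\lambda_2$; the assumption that $\pi(\sigma)$ has order exactly~$r$ forces $\zeta$ to be a primitive $r$-th root of unity, and hence $\Tr(\theta(A)(\sigma))=2+\zeta+\zeta^{-1}$. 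Finally, $\zeta+\zeta^{-1}=\Tr(P_\sigma)^2/\det(P_\sigma)-2$ lies in $M$, and being totally real it lies in $M\cap\R=\Q$. Niven's theorem then forces $r\in\{1,2,3,4,6\}$.

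The main obstacle is the identification $V^*\simeq V\otimes_{M,c}M$ in part~(i): here the CM structure enters decisively through the interaction of the Rosati involution with the $M$-action. Once this is settled the trace formula is purely formal, and part~(ii) reduces cleanly to Skolem--Noether combined with the rationality constraint on $2\cos(2\pi/r)$.
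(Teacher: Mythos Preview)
The paper does not prove this lemma; it simply cites \cite[Prop.~3.2, Prop.~3.4]{FS14}. Your proposal supplies a self-contained argument where the paper gives none, and it is correct.

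Your route is direct: for~(i), the decomposition $\End(A_L)\simeq V\otimes_M V^*$ together with the identification $V^*\simeq V\otimes_{M,c}M$ via the Rosati involution gives the product formula immediately, and rationality of the trace follows since it is the norm $N_{M/\Q}$ of $\Tr(\theta_{\id}(E^*))$; for~(ii), Skolem--Noether reduces everything to the adjoint action of a single $P_\sigma\in\GL_2(M)$, whose eigenvalue ratio $\zeta$ is a primitive $r$-th root of unity, and rationality of $\zeta+\zeta^{-1}=\Tr(P_\sigma)^2/\det(P_\sigma)-2\in M\cap\R=\Q$ forces $\phi(r)\le 2$. Two minor points worth tightening: you should say ``a polarization'' rather than ``a principal polarization'' on $A_L$, since $A$ need not be principally polarized and any polarization defined over $k$ suffices for the Rosati argument; and strictly speaking the constraint $r\in\{1,2,3,4,6\}$ should be established \emph{before} writing the trace as $2+\zeta_r+\bar\zeta_r$, since for general $r$ the value $\zeta+\zeta^{-1}$ depends on which primitive root $\zeta$ is. Neither affects the validity of the argument.
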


\begin{proof}[Proof of $i)$ and $ii)$ of Proposition \ref{proposition: M}]
Let $r$ denote the order (4 or 6) of $\tau$. Let $\tilde\tau\in \Gal(L/F)$ be such that $\pi(\tilde\tau)=\tau$. Let $\xi$, $\omega$ be the eigenvalues of $\theta_\iota(E^*)(\tilde\tau)$. Then, $i)$ and $ii)$ of Lemma \ref{lemma: FS14}, give
\begin{equation}\label{equation: ig}
(\xi+\omega)(\overline\xi+\overline \omega)=2+\zeta_r+\overline\zeta_r\,.
\end{equation}
Without loss of generality, we may reorder $\xi$ and $\omega$ so that (\ref{equation: ig}) implies that $\xi=\zeta_r\omega$. The condition
$$
\Tr(\theta_\iota(E^*)(\tilde\tau))=\omega(1+\zeta_r)\in M
$$
forces $\omega$ to belong to the biquadratic extension $M(\zeta_r)$. Therefore, there are only the following possibilities for the value of the order $t$ of $\omega$: 1, 2, 3, 4, 6, 8, 12.
Suppose that $r=4$. It is straightforward to check that:
\begin{itemize} 
\item if $t=1$, $2$, $4$, then $\omega(1+\zeta_r)\in\Q(\sqrt{-1})\setminus\Q$;
\item if $t=8$, then $\omega(1+\zeta_r)\in\Q(\sqrt{-2})\setminus\Q$;
\item the values $t=3$, $6$, $12$ are incompatible with $\omega(1+\zeta_r)$ belonging to a quadratic imaginary field. 
\end{itemize}
Suppose that $r=6$. Then one readily checks that
\begin{itemize}
\item if $t=1$, $2$, $3$, $6$, then $\omega(1+\zeta_r)\in\Q(\sqrt{-3})\setminus\Q$;
\item the values $t=4$, $8$, $12$ are incompatible with $\omega(1+\zeta_r)$ belonging to a quadratic imaginary field. 
\end{itemize}
\end{proof}

\begin{remark}
One could wonder whether $\Gal(K/F)$ containing an element~$\tau$ of order $r=3$ imposes some condition on~$M$. However, the argument of the proof of Proposition \ref{proposition: M} can not be applied, since $\omega(1+\zeta_r)$ make take the rational value $\zeta_3(1+\zeta_3)=-1$. 
\end{remark}

\begin{lemma}\label{lemma: LF}
One has:
\begin{enumerate}[i)]
\item If $(\alt 4)$ holds and $M\not =\Q(\sqrt {-1}),\Q(\sqrt{-3})$, then $\Gal(L/F)$ is isomorphic to the binary tetrahedral group $\mathrm{B_T}$.
\item If $(\sym 4)$ holds, then $\Gal(L/F)$ is isomorphic to either $\mathrm{B_O}$, $\GL_2(\Z/3\Z)$,  or \footnote{The Gap identification numbers of $\mathrm{B_T}$, $\mathrm{B_O}$, $\GL_2(\Z/3\Z)$, and $G_8$ are $\langle 24, 3\rangle$, $\langle 48,28\rangle$, $\langle 48,29\rangle$, and $\langle 96,67\rangle$, respectively.} $G_8$.
\end{enumerate}
\end{lemma}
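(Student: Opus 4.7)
The plan is to combine the faithful $2$-dimensional $M[\Gal(L/F)]$-module $\theta_\iota(E^*)$ constructed before Lemma~\ref{lemma: FS14} with the upper bound on the kernel $\Gal(L/K)$ provided by Lemma~\ref{lemma: fieldL}, thereby identifying $\Gal(L/F)$ as a central extension of $\Gal(K/F)$ by a cyclic group of order at most two, and then selecting among such extensions those admitting a faithful two-dimensional complex representation.

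First I would verify that the exact sequence
$$
1\lra \Gal(L/K)\lra \Gal(L/F)\lra \Gal(K/F)\lra 1
$$
is central. Under the injection $\xi\colon\Gal(L/K)\hookrightarrow M^*$ of Lemma~\ref{lemma: fieldL}, conjugation by $\Gal(L/F)$ corresponds to the $G_F$-action on $M^*$, which is trivial because $M\subseteq k\subseteq F$. In part i), the hypothesis $M\neq\Q(\sqrt{-1}),\Q(\sqrt{-3})$ forces $n=2$, and in part ii), Proposition~\ref{proposition: M} iv) forces $M=\Q(\sqrt{-2})$, so again $n=2$. Hence in both cases $|\Gal(L/K)|\leq 2$.

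For part i), since we are implicitly assuming that $(\sym 4)$ does not hold (that case is handled in ii)), the list of Remark~\ref{remark: r(A)} forces $\Gal(K/F)\simeq \alt 4$. If $L=K$, then $\theta_\iota(E^*)$ would supply a faithful $2$-dimensional complex representation of $\alt 4$; but the irreducible characters of $\alt 4$ have degrees $1,1,1,3$, and its abelianization is $\cyc 3$, so no such representation exists. Hence $|\Gal(L/K)|=2$ and $\Gal(L/F)$ is a central extension of $\alt 4$ by $\cyc 2$. Up to isomorphism there are only two such groups, namely $\cyc 2\times\alt 4$ and the binary tetrahedral group $\mathrm{B_T}$. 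The first admits no faithful $2$-dimensional complex representation, by the same representation-theoretic argument: any irreducible representation of the direct product is an exterior tensor product of irreducibles of the two factors, and no such tensor product (nor sum of $1$-dimensional representations) can be faithful on the non-abelian factor $\alt 4$. Therefore $\Gal(L/F)\simeq \mathrm{B_T}$.

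Part ii) proceeds identically. With $\Gal(K/F)\simeq\sym 4$, the case $L=K$ is ruled out because the unique $2$-dimensional irreducible representation of $\sym 4$ has kernel the Klein four-group (hence is not faithful), and no direct sum of one-dimensional characters of $\sym 4$ is faithful either. Thus $\Gal(L/F)$ is a central extension of $\sym 4$ by $\cyc 2$. The isomorphism classes of such extensions are the split product $\cyc 2\times\sym 4$ together with the two non-split double covers $\mathrm{B_O}$ and $\GL_2(\Z/3\Z)$; the split one is excluded by the faithfulness of $\theta_\iota(E^*)$, as above. The main delicate point of the whole argument is precisely this enumeration of the isomorphism types of central extensions of $\sym 4$ by $\cyc 2$; once it is in hand, the rest amounts to comparing dimensions and kernels of irreducible complex representations of small groups.
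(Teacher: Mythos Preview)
Your argument for part i) is essentially the same as the paper's, and it is correct. The verification of centrality is unnecessary (any extension by $\cyc 2$ is automatically central since $\Aut(\cyc 2)=1$) but does no harm.

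In part ii), however, there is a genuine circularity. You invoke Proposition~\ref{proposition: M}~iv) to conclude $M=\Q(\sqrt{-2})$ and hence $n=2$, but in the paper Proposition~\ref{proposition: M}~iv) is \emph{proved using} Lemma~\ref{lemma: LF}~ii). What is available at this point is only Proposition~\ref{proposition: M}~i): since $(\sym 4)$ implies $(H_4)$, one gets $M\in\{\Q(\sqrt{-1}),\Q(\sqrt{-2})\}$, so $n\le 4$ and $\Gal(L/K)$ may a~priori be $\cyc 4$. The paper handles this by checking that none of the (nine) extensions of $\sym 4$ by $\cyc 4$ admits a faithful $2$-dimensional representation, which forces $[L:K]\le 2$ after all. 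Your shortcut bypasses exactly this step, and without it the argument is incomplete.

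There is also a minor miscount in your enumeration of central extensions of $\sym 4$ by $\cyc 2$: there are four, not three, since $H^2(\sym 4,\{\pm 1\})\simeq \cyc 2\times\cyc 2$ and $\mathrm{Out}(\sym 4)=1$, so distinct cohomology classes give non-isomorphic groups. Besides $\cyc 2\times\sym 4$, $\mathrm{B_O}$, and $\GL_2(\Z/3\Z)$, there is a fourth group corresponding to the non-trivial symmetric class. This does not affect the final conclusion, because that fourth group contains $\cyc 2\times\alt 4$ (the symmetric class restricts trivially to $\alt 4$) and hence also fails to have a faithful $2$-dimensional representation; but the enumeration as stated is incorrect.
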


\begin{proof}
In the course of this proof, let us say that a finite group is \emph{$2$-embeddable} if it possesses a faithful representation of dimension~$2$ with coefficients in a quadratic imaginary field.
In case $i)$, we have that $L/K$ is at most quadratic by Lemma~\ref{lemma: fieldL}. The existence of the faithful representation $\theta_\iota(E^*)\colon \Gal(L/F)\rightarrow \GL_2(M)$ implies that $\Gal(L/F)$ is $2$-embeddable. Since $\alt 4$ is not $2$-embeddable, we have that $L\not = K$. There are two extensions of $\alt 4$ by $\cyc 2$: $\mathrm{B_T}$ and $\alt 4\times \cyc 2$. The lemma then follows from the fact that $\alt 4 \times \cyc 2$ is not $2$-embeddable.

As for $ii)$, we first note that $\mathrm{(H_4)}$ holds; then $M=\Q(\sqrt{-1})$ or $M=\Q(\sqrt{-2})$ by statement $i)$ of Proposition~\ref{proposition: M}. In particular Lemma~\ref{lemma: fieldL} implies that $\Gal(L/K)\subseteq \cyc 4$. There are eight groups that are extensions of~$\sym 4$ by~$\cyc 4$; among them, only $G_8$ is $2$-embeddable. Since $\sym 4$ is not $2$-embeddable, we are left with the case that $L/K$ is quadratic. There are four extensions of~$\sym 4$ by~$\cyc 2$, only two of which are $2$-embeddable: $\mathrm{B_O}$ and $\GL_2(\Z/3\Z)$. 
\end{proof}

\begin{proof}[Proof of $iii)$ and $iv)$ of Proposition \ref{proposition: M}]
Assume that we are in case $iii)$. We may suppose that $M\not=\Q(\sqrt{-1}),\Q(\sqrt{-3})$, since in this case the proposition is trivially true. Then, by Lemma~\ref{lemma: LF} we have a faithful representation
$$
\theta_\iota(E^*)\colon \mathrm{B_T}\rightarrow \GL_2(M)\,.
$$
Inspecting the character table of $\mathrm{B_T}$, we realize that it has three faithful representations of dimension~$2$: one has rational trace (call it $\varrho$), and the other two have trace taking values in $\Q(\sqrt{-3})\setminus \Q$ on elements of order $3$ and $6$. Since we have assumed that $M\not=\Q(\sqrt{-3})$, we have that $\theta_\iota(E^*)\simeq \varrho$. Let $Q$ denote the subgroup of $\mathrm{B_T}$ isomorphic to the quaternion group. It is well-known that the restriction of $\varrho$ to $Q$, sometimes called the quaternionic representation of~$Q$, although having rational trace, is realizable over a field $M$ if and only if $M$ splits the quaternion algebra $(-1,-1)_\Q$ (see \cite[Cor. to Prop. 35]{Ser98}).  

Case $iv)$ is immediate from Lemma~\ref{lemma: LF}: one readily checks that the trace of any faithful representation of dimension~$2$ of $\mathrm{B_O}$ or $\GL_2(\Z/3\Z)$ takes values in $\Q(\sqrt{-2})\setminus \Q$ on elements of order~$8$, and that the trace of a faithful representation of dimension 2 of $G_8$ with traces in a quadratic imaginary field takes, in fact, values in $\Q(\sqrt{-1})\setminus\Q$ on elements of order 12.
\end{proof}

\subsection{Restriction of scalars: cohomological obstructions}\label{sec: restriction of scalars}
In this section, let $M$ be an imaginary quadratic field of discriminant $D$. Assume that
\begin{enumerate}[(0)]
\item $D$ is different from $ -3,-4$, and $-8$.
\end{enumerate} 
Suppose that $A/k$ is an abelian surface satisfying the following conditions:
\begin{enumerate}[(1)]
\item $k$ contains $M$;
\item $A_K\sim E^2$, where $E/K$ is an elliptic curve with CM by $M$;
\item the field $K$ is the smallest such that $\End(A_K)=\End(A_\Qbar)$; and
\item the Galois group $G=\Gal(K/k)$ is isomorphic to $ \sym 4$.
\end{enumerate}
Note that $E$ is then a $k$-curve completely defined over $K$ (cf. Definition \ref{def: completely defined}). Let $\gamma_E^K\in H^2(G,M^*)$ be the cohomology class attached to $E$ as in \eqref{eq: completely defined}. Since $K$ will be fixed through this section, let us denote $\gamma_E^K$ simply by $\gamma_E$.

In \S\ref{section: galstr} we have found that the above assumptions impose conditions on the field $M$. For example, since condition $\mathrm{(A_4)}$ holds, Proposition~\ref{proposition: M} implies that $M$ necessarily splits the quaternion algebra $(-1,-1)_\Q$. The goal of this section is to give some more necessary conditions on $M$, under the following additional condition (which we assume from now on):
\begin{itemize}
\item[(NE)] If $D$ is even, then it is either divisible by $8$ or by some prime $p\equiv 3\mod 4$.
\end{itemize}
Recall the model $E^*$ of $E$ up to isogeny introduced in Remark \ref{rk: field of def}. It is any elliptic curve with CM by the maximal order of $M$ and defined over the field $F = k(j_{E^*})$. We next show that condition $\mathrm{(NE)}$ allows for a particular choice of $E^*$ that will be key to our computations.

First of all we note that $[F\colon k]=2$. Indeed, $F/k$ is a polyquadratic extension and, since $F\subseteq K$, we see that $[F\colon k]$ is either $1$ or $2$.  If $F=k$, then  Proposition~\ref{proposition: M} implies that $M=\Q(\sqrt{-2})$, which contradicts our assumption $(0)$; therefore we must have $[F\colon k]=2$. Let $H$ be the Hilbert class field of $M$. Thanks to the assumption $\mathrm{(NE)}$, by \cite[\S 11]{Gr80} there exists an elliptic curve $\tilde E/H$ with CM by $M$ that is $H$-isogenous to its $\Gal(H/\Q)$-conjugates. Since $H\subseteq F$, we can (and do) take as~$E^*$ the curve~$\tilde{E}_F$. 

For this particular choice, $E^*$ is a $k$-curve completely defined over $F$; namely,~$E^*$ is $F$-isogenous to its $\Gal(F/k)$-conjugate. In particular, the cohomology class $\gamma_{E^*}=\gamma_{E^*}^K$ lies in the image of the inflation map $$H^2(\Gal(F/k),M^*)\ra H^2(G,M^*),$$ and a cocycle $c_{E^*}$ representing $\gamma_{E^*}$ is of the form
  \begin{align}\label{eq:C_E_star }
    c_{E^*}(\sigma,\tau)= \begin{cases} m \text{ if } \sigma_{|F}\neq \mathrm{Id} \text{ and }\tau_{|F}\neq \mathrm{Id},\\ 1 \text{ otherwise,} \end{cases}
  \end{align}
for some $m\in M^*$. Observe that $\gamma_{E^*}$ only depends on the class of $m\mod{(M^*)^2}$, for replacing $c_{E^*}$ by a cohomologous cocycle changes $m$ by an element of $(M^*)^2$.

The aim of this section is to prove the following result.
\begin{proposition}\label{prop:m}
  Either $2m$ or $-2m$ is a square in $M$.
\end{proposition}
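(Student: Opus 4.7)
The plan is to analyze the endomorphism algebra $\mathcal{B}:=\End(\Res_{K/k}E)$ as a twisted group algebra over $M$, using it both to pin down the simple factor in which $A$ appears and to derive a cohomological constraint on $m$ via a Schur-obstruction computation. By general results on $k$-varieties (as in~\cite{Rib92}), $\mathcal{B}$ is isomorphic to $M^{c_E}[G]$, where $G=\Gal(K/k)\cong\sym{4}$. By Proposition~\ref{prop: main props k-variety}(ii), and after the identification of centers afforded by any $\Qbar$-isogeny $E\to E^*$ (possibly introducing complex conjugation on $M$), we may replace $c_E$ by the cocycle $c_{E^*}$ of the simple form~\eqref{eq:C_E_star } inflated from $[m]\in H^2(\Gal(F/k),M^*)\cong M^*/(M^*)^2$.

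Next I would decompose $\mathcal{B}$. Writing $\mathcal{B}=M[\alt{4}]\oplus M[\alt{4}]\cdot u$ with $u^2=m$ and $u$ acting on $M[\alt{4}]$ by conjugation by a fixed transposition, and using the Wedderburn decomposition $M[\alt{4}]\cong M\oplus M(\sqrt{-3})\oplus M_3(M)$ (valid since $\sqrt{-3}\notin M$ by assumption~(0)), a direct computation tracing the action of $u$ on each Wedderburn block identifies $\mathcal{B}$ as a direct sum of three central-simple $M$-algebras: the field $M(\sqrt{m})$, the cyclic quaternion algebra $\left(\frac{-3,m}{M}\right)$, and the matrix algebra $M_3(M(\sqrt{m}))$. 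Since $A$ is defined over $k$ with $A_K\sim E^2$, the space $\Hom(A,\Res_{K/k}E)\cong\Hom(A_K,E)$ has $M$-dimension~$2$; combined with $\End(A)=M$ (which follows from Proposition~\ref{proposition: centre racional} and the fact that $\sym{4}$ has no faithful linear $2$-dimensional representation, forcing $M_2(M)^G=M$), the variety $A$ appears with multiplicity~$2$ in $\Res_{K/k}E$, and the associated simple factor of $\mathcal{B}$ is $M_2(M)$. Among the three components, only $\left(\frac{-3,m}{M}\right)$ has center $M$, so it must be the split quaternion algebra $M_2(M)$.

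The resulting norm condition on $m$ alone is weaker than the claim, so I would sharpen it by comparing cocycles directly. The faithful projective representation $\bar\rho\colon\sym{4}\hookrightarrow\PGL_2(M)$ induced by the $G$-action on $\End(A_K)$ lifts, by Lemma~\ref{lemma: LF}(ii), to a faithful linear $2$-dimensional representation of either $\mathrm{B_O}$ or $\GL_2(\Z/3\Z)$, and the characters of the $2$-dimensional spin representations of these groups take values in $\Q(\sqrt{2})$ and $\Q(\sqrt{-2})$ respectively. Under assumption~(0), $M$ contains neither $\sqrt{2}$ nor $\sqrt{-2}$, so the lift $\tilde\rho$ of $\bar\rho$ is defined over the quadratic extension $M(\sqrt{\pm 2})$ but not over $M$. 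A descent cocycle computation, evaluating $\tilde\rho(\sigma)\tilde\rho(\tau)\tilde\rho(\sigma\tau)^{-1}$ for representatives $\sigma,\tau$ of the nontrivial coset of $\alt{4}$ in $\sym{4}$, identifies the Schur obstruction class in $H^2(G,M^*)$ of $\bar\rho$ with the inflation of $[2]$ or $[-2]$ from $H^2(\Gal(F/k),M^*)\cong M^*/(M^*)^2$. Matching this obstruction with $[c_{E^*}]=[m]$ yields $[m]\equiv \pm 2\pmod{(M^*)^2}$, which is the desired conclusion.

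The principal obstacle will be the last step: executing the descent cocycle calculation cleanly and identifying its outcome with $c_{E^*}$ requires a delicate compatibility check between the $G$-action on $\End(A_K)\cong M_2(M)$ arising from the $k$-rational structure of $A$ and the compatible isogenies $\{\mu_\sigma\colon\acc\sigma E\to E\}_{\sigma\in G_k}$ defining $c_E$. Assumption~(NE) is decisive here: by ensuring the existence of an $E^*$ which is $F$-isogenous to each of its $\Gal(F/k)$-conjugates, it forces $c_{E^*}$ to be inflated from $H^2(\Gal(F/k),M^*)$ in the simple form~\eqref{eq:C_E_star }, reducing the final comparison to a single element of $M^*/(M^*)^2$.
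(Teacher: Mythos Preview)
Your proposal contains a genuine gap at the very first step. You assert that, since $E$ and $E^*$ are $\Qbar$-isogenous, Proposition~\ref{prop: main props k-variety}(ii) lets you replace $c_E$ by $c_{E^*}$ when computing $\End(\Res_{K/k}E)\simeq M^{c_E}[G]$. But the twisted group algebra here is formed with the class $\gamma_E^K\in H^2(\Gal(K/k),M^*)$ of~\eqref{eq: completely defined}, which depends on the $K$-isogeny class of $E$, not its $\Qbar$-isogeny class (cf.\ Remark~\ref{remark: imageinf}). In the present situation $E$ and $E^*$ are \emph{not} $K$-isogenous: by Lemmas~\ref{lemma:isogeny over quadratic}--\ref{lemma: nonsym} of the paper they become isogenous only over a quadratic extension $L/K$ whose class $\gamma_L\in H^2(\sym 4,\{\pm 1\})$ is one of the two \emph{non-symmetric} classes $\gamma_\pm$, and one has $\gamma_E=\gamma_{E^*}\cdot\gamma_\pm$. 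Consequently $M^{c_E}[\sym 4]$ and $M^{c_{E^*}}[\sym 4]$ are genuinely different algebras: since $c_{E^*}$ is symmetric every conjugacy class is $c_{E^*}$-regular, so the center of $M^{c_{E^*}}[\sym 4]$ is $5$-dimensional over $M$ (consistent with your decomposition $M(\sqrt m)\oplus(-3,m)_M\oplus \M_3(M(\sqrt m))$), whereas the paper computes the center of $M^{c_E}[\sym 4]$ to be $3$-dimensional. Your identification of $A$ inside $\Res_{K/k}E^*$ is also unjustified: $\Hom(A,\Res_{K/k}E^*)\simeq\Hom(A_K,E^*)\simeq\Hom(E,E^*)^2$ may well be zero when $E$ and $E^*$ are not $K$-isogenous.

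The same confusion undermines your third paragraph. The Schur obstruction of the faithful projective representation $\bar\rho\colon\sym 4\hookrightarrow\PGL_2(M)$ arising from the $G$-action on $\End(A_K)$ is (up to inverse) the class $\gamma_E$, not $\gamma_{E^*}$. Since $\gamma_E=\gamma_{E^*}\cdot\gamma_\pm$ with $\gamma_\pm$ not inflated from $\Gal(F/k)$, the obstruction is \emph{not} the inflation of any element of $M^*/(M^*)^2$, so the ``matching with $[c_{E^*}]=[m]$'' you propose cannot be carried out as stated. What the paper does instead is compute the center of $M^{c_E}[\sym 4]$ directly: it is $M\times M[t]/(t^2\mp 2m)$ (Lemma~\ref{lemma:the center}), and the factor $\M_2(M)$ coming from $A$ forces this center to split as $M\times M\times M$ (Proposition~\ref{prop: center splits}), yielding $\pm 2m\in(M^*)^2$. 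The appearance of the $\pm 2$ is precisely the contribution of the non-symmetric class $\gamma_\pm$ that your replacement of $c_E$ by $c_{E^*}$ discards.
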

The proof follows from an explicit computation of the decomposition into simple varieties of the variety $R = \Res_{K/k}E$, the restriction of scalars of $E$. This is justified by the observation that $A$ is one of the factors of such decomposition.
\begin{lemma}\label{eq: lemma A is a simple factor of R}
The abelian variety $A$ is a simple factor of $R$. More precisely, 
$$R\sim A^2\times A',$$
for some abelian variety $A'$ that does not contain any factor isogenous to $A$.
\end{lemma}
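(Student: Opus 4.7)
The plan is to combine Frobenius reciprocity (the universal property of the Weil restriction of scalars) with a computation of the $k$-endomorphism algebra of $A$.

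First, I would verify that $A$ is $k$-simple with $\End_k(A)\simeq M$. By Proposition~\ref{proposition: centre racional}, $M=Z(\End(A_\Qbar))\subseteq \End_k(A)$. Since $M\subseteq k$, the Galois group $G=\Gal(K/k)\simeq \sym 4$ acts trivially on the center of $\End_K(A)\simeq \mathrm{M}_2(M)$, so its action factors through $M$-algebra automorphisms of $\mathrm{M}_2(M)$ and gives a homomorphism $G\to \PGL_2(M)$. The minimality of $K$ forces this homomorphism to be injective, producing a faithful embedding $\sym 4 \hookrightarrow \PGL_2(M)$. The induced projective action of $\sym 4$ on $M^2$ must be absolutely irreducible: otherwise the image would lie in a Borel subgroup of $\PGL_2$, which is metabelian, whereas $\sym 4$ has derived length $3$. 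Schur's lemma then yields $\End_k(A)=\mathrm{M}_2(M)^G=M$. Since $\End_k(A)$ is a field of $\Q$-dimension $2$ and $\dim A=2$, the variety $A$ is forced to be $k$-simple.

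Next, the universal property of the Weil restriction gives
\[
\Hom_k(A,R)=\Hom_k(A,\Res_{K/k}E)\simeq \Hom_K(A_K,E).
\]
Using the $K$-isogeny $A_K\sim E^2$ and $\End_K(E)=M$, this becomes $\Hom_K(E^2,E)\simeq M^2$, free of rank $2$ over $M$ and of $\Q$-dimension $4$.

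Finally, write a $k$-isogeny decomposition $R\sim A^n\times A'$ in which $A'$ has no factor $k$-isogenous to $A$. Since $A$ is $k$-simple and $\End_k(A)=M$, one obtains $\Hom_k(A,R)\simeq \End_k(A)^n=M^n$, forcing $n=2$. A quick dimension count, $\dim R=[K:k]\cdot \dim E=24$, then yields $\dim A'=20$, which is harmless. The main obstacle is the identification $\End_k(A)=M$; everything after that is purely formal, reducing to the universal property of $\Res_{K/k}$ and Schur's lemma.
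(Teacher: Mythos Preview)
Your proof is correct and follows the same overall strategy as the paper: compute $\Hom_k(A,R)\simeq\Hom_K(A_K,E)\simeq M^2$ via the universal property of restriction of scalars, establish $\End_k(A)\simeq M$, and read off the multiplicity $n=2$.

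The one genuine difference is in how $\End_k(A)\simeq M$ is obtained. The paper simply invokes the classification in \cite[Table~8]{FKRS12}: the Galois endomorphism type of $A$ is $\mathbf{F}[\sym 4]$, whence $\dim_\Q\End_k(A)=2$, and then Proposition~\ref{proposition: centre racional} identifies this $2$-dimensional algebra with $M$. Your argument is self-contained: you use minimality of $K$ to get a faithful $\sym 4\hookrightarrow\PGL_2(M)$, then observe that $\sym 4$ has derived length $3$ and hence cannot sit inside a Borel, so the associated $2$-dimensional projective action is absolutely irreducible, and Schur gives $\mathrm{M}_2(M)^G=M$. This buys you independence from the external classification and makes transparent exactly where the hypothesis $G\simeq\sym 4$ is used (namely, non-metabelianness). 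Both routes are short; yours is arguably cleaner for the purposes of this lemma.
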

\begin{proof}
  By the universal property of the restriction of scalars we have an isomorphism of $\Q$-vector spaces
  \begin{align}
    \Hom(A,R)=\Hom(A_K,E)\simeq M^2.
  \end{align}
We claim that $\End(A)\simeq M$ as $\Q$-vector spaces (and, in fact, as $\Q$-algebras). Indeed, the Galois endomorphism type (as described in \cite[\S4]{FKRS12}) of $A$ is \textbf F[$\sym 4$], and then by \cite[Table 8]{FKRS12}, one deduces that $\End(A)$ has dimension $2$ over $\Q$. The isomorphism then follows from Proposition~\ref{proposition: centre racional}.

Therefore, we see that two (and only two) copies of $A$ appear in the decomposition of $R$ into simple varieties and this finishes the proof.
\end{proof}
In order to further determine the decomposition of $R$, we will compute the decomposition of its endomorphism algebra into simple algebras. By \cite[\S 15]{Gr80} (cf. also \cite[Lemma 6.4]{Rib92} for the non-CM case), there is an isomorphism of algebras 
\begin{align}
  \End(R)\simeq M^{c_{E}}[G]\,,
\end{align}
where $M^{c_{E}}[G]$ denotes the \emph{twisted group algebra} of $G$ by a cocycle $c_E$ representing~$\gamma_E$.
This is the $M$-algebra with $M$-basis the symbols $\{u_\sigma\}_{\sigma\in G}$ and multiplication given by the rule
\begin{align}\label{eq: multiplication in twisted group algebra}
  u_\sigma \cdot u_\tau = c_E(\sigma,\tau)u_{\sigma\tau}.
\end{align}
In order to relate $c_{E^*}$ and $c_{E}$ we need two basic lemmas.
\begin{lemma}\label{lemma:isogeny over quadratic}
  Let $C/K_1$ and $C'/K_1$ be elliptic curves with CM by a field $M_1$ contained in $K_1$ and different from $\Q(\sqrt{-1})$ and $\Q(\sqrt{-3})$. Then there exists an element $\beta\in K_1$ such that $C'$ is $K_1$-isogenous to $C_\beta$, the $K_1(\sqrt{\beta})$-twist of $C$.
\end{lemma}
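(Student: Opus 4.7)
The plan is to measure the failure of a geometric isogeny $C\to C'$ to descend to $K_1$ by a continuous character of $G_{K_1}$ with values in $M_1^*$, and to identify this character with a quadratic one via the exclusion of $\Q(\sqrt{-1})$ and $\Q(\sqrt{-3})$.

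First, since $C$ and $C'$ both have CM by $M_1$, they are $\Qbar$-isogenous; fix an isogeny $\psi\colon C_{\Qbar}\to C'_{\Qbar}$ in the isogeny category. For every $\sigma\in G_{K_1}$ the quantity $\xi(\sigma):=\psi^{-1}\circ \acc{\sigma}\psi$ lies in $\End(C_{\Qbar})=M_1$, and in fact in $M_1^*$ since both $\psi$ and $\acc{\sigma}\psi$ are isomorphisms in the isogeny category. Because $\xi(\tau)\in M_1\subseteq K_1$ is fixed by $\sigma$, a direct computation (as already used in the proof of Lemma~\ref{lemma: fieldL}) gives $\xi(\sigma\tau)=\xi(\sigma)\xi(\tau)$. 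Hence $\xi\colon G_{K_1}\to M_1^*$ is a continuous group homomorphism.

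The key step is that $\xi$ has finite image: since $\psi$ is defined over some finite extension of $K_1$, $\xi$ factors through a finite quotient of $G_{K_1}$, so the image lies in the torsion subgroup of $M_1^*$, namely the group of roots of unity of $M_1$. Under the hypothesis that $M_1\neq \Q(\sqrt{-1}),\Q(\sqrt{-3})$, this group equals $\{\pm 1\}$, so $\xi$ is a quadratic character of $G_{K_1}$ and can therefore be written as $\xi=\chi_\beta$, the character attached to a quadratic extension $K_1(\sqrt{\beta})/K_1$ (with $\beta$ a square in $K_1^*$ when $\xi$ is trivial).

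To conclude, let $\phi\colon C_\beta\to C$ be the canonical isomorphism over $K_1(\sqrt{\beta})$, which by construction satisfies $\acc{\sigma}\phi=\chi_\beta(\sigma)\phi$ for all $\sigma\in G_{K_1}$. Since $\xi(\sigma)$ and $\chi_\beta(\sigma)$ are scalars in $\{\pm 1\}$, one computes
\[
\acc{\sigma}(\psi\circ\phi)=\acc{\sigma}\psi\circ\acc{\sigma}\phi=\xi(\sigma)\,\chi_\beta(\sigma)\,(\psi\circ\phi)=\psi\circ\phi,
\]
so $\psi\circ\phi$ descends to a $K_1$-isogeny $C_\beta\to C'$, as desired. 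The only genuine obstacle is the finiteness-of-image step and the identification of the torsion of $M_1^*$ with $\{\pm 1\}$, which is precisely where the exclusion of $\Q(\sqrt{-1})$ and $\Q(\sqrt{-3})$ is essential.
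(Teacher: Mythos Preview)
Your proof is correct and follows essentially the same approach as the paper: both construct the obstruction character $\sigma\mapsto \psi^{-1}\circ\acc{\sigma}\psi$, use the hypothesis on $M_1$ to force it into $\{\pm 1\}$, and then match it with a quadratic twist. The only cosmetic difference is in the last step, where the paper invokes the $H^1(\Gal(L_1/K_1),M_1^*)$-classification of twists to conclude that $C'$ and $C_\beta$ correspond to the same class, while you instead verify directly that $\psi\circ\phi$ is Galois-invariant; these are equivalent standard arguments.
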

\begin{proof}
  Since $C$ and $C'$ have CM by $M_1$ there exists a $\Qbar$-isogeny $\lambda\colon C'_{\Qbar}\ra C_{\Qbar}$. The fact that $G_{K_1}$ acts trivially on $M_1$ implies that the map
  \begin{align}\label{eq: Homs G_K}
\begin{array}{ccc}
    G_{K_1} & \lra & M_1^*\\
\sigma & \longmapsto & \lambda^{-1}\circ \acc\sigma \lambda
\end{array}
  \end{align}
is a homomorphism. Its image is finite, because $\lambda$ is defined over some finite extension of $K_1$, and therefore contained in $\{\pm 1\}$ by our hypothesis on $M_1$. If the image is trivial then $\lambda$ is defined over $K_1$; otherwise, it is defined over a field of the form $L_1=K_1(\sqrt{\beta})$ for some $\beta\in K_1^*\setminus (K_1^*)^2$. The set $\Hom(\Gal(L_1/K_1),M_1^*)= H^1(\Gal(L_1/K_1),M_1^*)$ parametrizes the elliptic curves which are $L_1$-isogenous to $C$, up to $K_1$-isogeny. The curve $C_\beta$ also corresponds to the homomorphism \eqref{eq: Homs G_K}, and therefore $C'$ is $K_1$-isogenous to $C_\beta$.
\end{proof}

In the following statement we use the interpretation of $H^2(G,\{\pm 1\})$ as a group classifying (classes of) central extensions of $G$ by $\{\pm 1\}$.
\begin{lemma}\label{lemma: completely def}
  Let $C$ be a $k_1$-curve completely defined over a field $K_1$. Suppose that $C$ has CM by a field $M_1$  contained in $ k_1$, and let $\gamma_C\in H^2(\Gal(K_1/k_1), M_1^*)$ be the corresponding cohomology class. Let $L_1=K_1(\sqrt{\beta})$ for some $\beta\in K_1$, and let $C_\beta$ be the $L_1$-twist of $C$. The curve $C_\beta$ is completely defined over $K_1$ if and only if $L_1$ is Galois over $k_1$. Moreover, in that case the cohomology classes $\gamma_C,\gamma_{C_\beta}\in H^2(\Gal(K_1/k_1,M_1^*))$ satisfy the relation
  \begin{align*}
    \gamma_{C_\beta} = \gamma_C \cdot \gamma_{L_1},
  \end{align*}
where $\gamma_{L_1}\in H^2(\Gal(K_1/k_1),\{\pm 1\})$ stands for the cohomology class attached to 
\begin{align*}
  1 \lra \Gal(L_1/K_1)\simeq \{\pm 1\} \lra \Gal(L_1/k_1)\lra \Gal(K_1/k_1)\lra 1.
\end{align*}
\end{lemma}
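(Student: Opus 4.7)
The plan is to build compatible $K_1$-isogenies for $C_\beta$ by twisting those for $C$ through a fixed $L_1$-isomorphism between them, verify the descent criterion characterising when $L_1/k_1$ is Galois, and then read the relation between the two cocycles off the discrepancy produced by the choice of Galois lifts.

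First I would fix an $L_1$-isomorphism $\phi\colon C_{L_1}\xrightarrow{\sim}C_{\beta,L_1}$. The defining property of the quadratic twist gives $\acc{\tau_0}{\phi}=-\phi$, where $\tau_0$ generates $\Gal(L_1/K_1)$, and hence also $\acc{\tau_0}{\phi^{-1}}=-\phi^{-1}$. Let $\{\mu_\sigma\colon\acc\sigma C\to C\}_{\sigma\in\Gal(K_1/k_1)}$ be the compatible $K_1$-isogenies realising the cocycle $c_C$ of $\gamma_C$. For each $\sigma$ I would choose a lift $\tilde\sigma\in\Gal(L_1/k_1)$ (which is possible as soon as $L_1/k_1$ is Galois), and define
\[
\mu'_\sigma := \phi\circ\mu_\sigma\circ\acc{\tilde\sigma}{\phi^{-1}}\colon \acc\sigma C_\beta\lra C_\beta,
\]
a priori only over $L_1$.

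Next I would prove the equivalence. If $L_1/k_1$ is Galois, the normality of $\Gal(L_1/K_1)$ in $\Gal(L_1/k_1)$ forces $\tau_0\tilde\sigma=\tilde\sigma\tau_0$. The two sign flips from $\acc{\tau_0}{\phi}=-\phi$ and $\acc{\tau_0\tilde\sigma}{\phi^{-1}}=\acc{\tilde\sigma}(\acc{\tau_0}{\phi^{-1}})=-\acc{\tilde\sigma}{\phi^{-1}}$, combined with the $K_1$-rationality of $\mu_\sigma$, cancel and give $\acc{\tau_0}{\mu'_\sigma}=\mu'_\sigma$; so each $\mu'_\sigma$ descends to $K_1$ and $C_\beta$ is completely defined over $K_1$. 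Conversely, if $L_1/k_1$ is not Galois then there exists $\sigma$ with $\acc\sigma\beta/\beta\notin(K_1^*)^2$. Using $\acc{\tau_0}{\phi}=-\phi$ to compute $\Hom$-spaces over $K_1$, one finds $\Hom_{K_1}(C_{\beta_1},C_{\beta_2})=0$ whenever $\beta_1\beta_2\notin(K_1^*)^2$. Since $\acc\sigma C_\beta$ is $K_1$-isogenous to $C_{\acc\sigma\beta}$ via the twist of $\mu_\sigma$, this rules out a $K_1$-isogeny $\acc\sigma C_\beta\to C_\beta$, so $C_\beta$ cannot be completely defined over $K_1$.

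Finally, I would compute $c_{C_\beta}(\sigma,\tau)=\mu'_\sigma\circ\acc\sigma{\mu'_\tau}\circ(\mu'_{\sigma\tau})^{-1}$. After expansion, the inner pairs $\acc{\tilde\sigma}{\phi^{-1}}$ and $\acc{\tilde\sigma}{\phi}$ cancel, leaving the single discrepancy $\acc{\tilde\sigma\tilde\tau}{\phi^{-1}}\circ\acc{\widetilde{\sigma\tau}}{\phi}=\chi(\eta(\sigma,\tau))$, where $\eta(\sigma,\tau):=\tilde\sigma\tilde\tau\cdot\widetilde{\sigma\tau}^{-1}\in\Gal(L_1/K_1)$ is the extension $2$-cocycle and $\chi\colon\Gal(L_1/K_1)\xrightarrow{\sim}\{\pm 1\}\subset M_1^*$. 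The remaining block collapses to $c_C(\sigma,\tau)$, because the outside $\phi$ and $\phi^{-1}$ disappear upon conjugation — $c_C(\sigma,\tau)\in M_1$ and $\phi$ is $M_1$-linear. This yields
\[
c_{C_\beta}(\sigma,\tau) = \chi(\eta(\sigma,\tau))\cdot c_C(\sigma,\tau),
\]
and since $\eta$ represents $\gamma_{L_1}$, passing to cohomology gives $\gamma_{C_\beta}=\gamma_C\cdot\gamma_{L_1}$. The main obstacle I anticipate is bookkeeping: one must carefully distinguish the actions of $\Gal(L_1/k_1)$ and $\Gal(K_1/k_1)$ on morphisms defined only over $L_1$, choose the lifts $\tilde\sigma$ coherently so that the discrepancy surfacing in the cocycle calculation is literally the extension cocycle representing $\gamma_{L_1}$, and keep the sign conventions of the quadratic twist consistent throughout.
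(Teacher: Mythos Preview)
Your argument is correct and is essentially the detailed execution of the approach the paper points to: the paper does not write out a proof but cites \cite[Lemma 6.1]{GQ14} for the non-CM case and observes that the same construction of the isogenies $\nu_\sigma$ (your $\mu'_\sigma$) works in the CM case because $M_1\subseteq k_1$ forces all endomorphisms of $C_\beta$ to be $K_1$-rational. Your twisting of the $\mu_\sigma$ through $\phi$ and a chosen lift $\tilde\sigma$, the centrality argument for the forward implication, the Kummer-theoretic obstruction for the converse, and the cocycle bookkeeping producing the extension class $\eta$ are exactly the ingredients of that referenced lemma; the only point worth making explicit is that the $\mu'_\sigma$ are compatible in the sense of Definition~\ref{definition: kcurve} (so that $c_{C_\beta}$ indeed lands in $M_1^*$), which follows immediately from the compatibility of the $\mu_\sigma$ together with the $M_1$-linearity of $\phi$ you already invoke.
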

\begin{proof}
  This is proved in \cite[Lemma 6.1]{GQ14} in the non-CM case. The same argument works in the CM case, with just a small remark: in the course of the argument of loc. cit. one constructs isogenies $\nu_\sigma\colon \acc\sigma C_\beta\ra C_\beta$ and uses the fact that if the curve is completely defined over $K_1$ then these $\nu_\sigma$ are necessarily defined over $K_1$. This is obvious in the non-CM case; in the CM case, if follows from our hypothesis that $M_1\subseteq k_1$, because then all the endomorphisms of $C_\beta$ are defined over $K_1$.
\end{proof}

Since $E$ and $E^*$ are $k$-curves completely defined over $K$, the above lemmas imply that $E$ and $E^*$ become isogenous over a quadratic extension $L/K$ such that $L/k$ is Galois, and that
\begin{align}\label{eq: gamma e and gamma l}
  \gamma_E = \gamma_{E^*}\cdot \gamma_L,
\end{align}
where $\gamma_L$ is the cohomology class attached $L/K$. 
\begin{remark}
Observe that $\gamma_L$ in principle belongs to $H^2(G,\{\pm 1\})$; in expressions like \eqref{eq: gamma e and gamma l}, we  make the slight abuse of notation of using the same symbol to denote the image of $\gamma_L$ under the natural map
\begin{align}\label{eq: natural map in H2}
  H^2(G,\{\pm 1\})\ra H^2(G,M^*)[2].
\end{align}
This is justified by the fact that \eqref{eq: natural map in H2} is injective. This follows, for instance, from the following decomposition, which is analogous to \eqref{equation: dec}:
\begin{align*}
  H^2(G,M^*)[2]\simeq H^2(G,\{\pm 1\})\times H^2(G,M^*/\{\pm 1\})[2].
\end{align*}
\end{remark}

Recall that $G\simeq \sym 4$. The cohomology group $H^2(\sym 4,\{\pm 1\})$ is known to be isomorphic to $ \cyc 2 \times \cyc 2$ (see, e.g., \cite[\S 1.5]{Se84} and the references therein). Besides the trivial cohomology class there is one symmetric cohomology class and two non-symmetric ones. The corresponding possibilities for $\Gal(L/k)$ are also well-known, and we are only interested in the non-symmetric classes, which correspond to $\mathrm{B_O}$ (the binary octahedral group introduced in \S\ref{section: galstr}) and to $\GL_2(\Z/3\Z)$.
\begin{lemma}\label{lemma: nonsym}
 The cohomology class $\gamma_L$ in \eqref{eq: gamma e and gamma l} is non-symmetric. That is, $\Gal(L/k)$ is isomorphic to $\mathrm{B_O}$ or $\GL_2(\Z/3\Z)$.
\end{lemma}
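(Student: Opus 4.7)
The plan is to deduce the structure of $\Gal(L/k)$ from that of $\Gal(L/F)$ by invoking Lemma~\ref{lemma: LF}~$i)$, and then to use the classification of central extensions of $\sym 4$ by $\cyc 2$ together with the restriction map in cohomology.

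First I would verify that the setup of this section fits the hypotheses of Lemma~\ref{lemma: LF}~$i)$. Since $[F:k]=2$ and $\Gal(K/k)\simeq \sym 4$, the subgroup $\Gal(K/F)$ has index~$2$ in $\sym 4$, so it must equal $\alt 4$; in particular the hypothesis $(\alt 4)$ of \S\ref{section: galstr} is satisfied. Assumption $(0)$ of the section excludes both $M=\Q(\sqrt{-1})$ and $M=\Q(\sqrt{-3})$. Next, the field $L$ defined in the present section coincides with the one from \S\ref{section: galstr}: indeed, $A_K\sim E^2$ forces $\Hom(E^*_L,A_L)\simeq \Hom(E^*_L,E_L)^2$, so the condition that $E$ and $E^*$ become $L$-isogenous is equivalent to the full $\Qbar$-Hom space between $E^*$ and $A$ being defined over~$L$. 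Lemma~\ref{lemma: LF}~$i)$ then yields $\Gal(L/F)\simeq \mathrm{B_T}$.

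The central step will be a group-cohomological one. Since $\Gal(L/K)\simeq \cyc 2$ has trivial automorphism group, the sequence
\begin{equation*}
1\lra \Gal(L/K)\lra \Gal(L/k)\lra \Gal(K/k)\lra 1
\end{equation*}
is a central extension, and $\gamma_L\in H^2(\sym 4,\{\pm 1\})\simeq (\cyc 2)^2$ classifies $\Gal(L/k)$ as such. Restriction along $\alt 4\hookrightarrow \sym 4$ sends $\gamma_L$ to the cohomology class in $H^2(\alt 4,\{\pm 1\})\simeq \cyc 2$ that classifies the preimage of $\alt 4$ inside $\Gal(L/k)$; by the previous paragraph, this preimage is $\Gal(L/F)\simeq \mathrm{B_T}$, so $\gamma_L|_{\alt 4}$ is precisely the non-trivial class.

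I would conclude by observing that symmetry of a cocycle is preserved under restriction and that the non-trivial class of $H^2(\alt 4,\{\pm 1\})$ is itself non-symmetric: any two distinct non-trivial elements of the Klein four-subgroup of $\alt 4$ commute, but their lifts in $\mathrm{B_T}$ are order-$4$ elements generating a copy of the quaternion group $Q_8$, and hence do not commute. Therefore $\gamma_L$ cannot be symmetric, which by the discussion preceding the lemma forces $\Gal(L/k)$ to be either $\mathrm{B_O}$ or $\GL_2(\Z/3\Z)$. The main obstacle is really just the initial bookkeeping identifying the two a priori different fields denoted~$L$ across sections; after this, everything reduces to elementary finite-group cohomology.
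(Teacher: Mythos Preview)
Your proof is correct. Both your argument and the paper's hinge on Lemma~\ref{lemma: LF} and on the fact that the symmetric classes in $H^2(\sym 4,\{\pm 1\})$ are exactly those restricting trivially to $\alt 4$, but the two proofs are organised differently. The paper argues by contradiction: assuming $\gamma_L$ symmetric, it uses the relation $\gamma_E=\gamma_{E^*}\cdot\gamma_L$ and the triviality of $\gamma_{E^*}|_{\Gal(K/F)}$ to deduce that $\gamma_E$ restricts trivially to $\alt 4$, then invokes Weil descent (Remark~\ref{rk: K-isogenous to a curve over F}) to conclude that $E$ is $K$-isogenous to a curve over $F$, contradicting Lemma~\ref{lemma: LF}. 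You instead identify the field $L$ of \S\ref{sec: restriction of scalars} with the field $L$ of \S\ref{section: galstr}, read off from Lemma~\ref{lemma: LF}~$i)$ that the restricted extension over $F$ is $\mathrm{B_T}$, and conclude directly that $\gamma_L|_{\alt 4}$ is the non-trivial (hence non-symmetric) class, giving in addition an explicit verification of non-symmetry via the non-commuting lifts in $Q_8\subset \mathrm{B_T}$. Your route avoids the detour through $\gamma_E$ and Weil descent at the cost of the bookkeeping needed to match the two fields named~$L$; the paper's route avoids that identification but is less self-contained in its appeal to the characterisation of the symmetric classes.
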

\begin{proof}
Suppose that $\gamma_L$ is symmetric. It turns out that the symmetric classes in $H^2(\sym 4,\{\pm 1\})$ are precisely those that have trivial restriction to $\alt 4$. Since $\Gal(K/F)\simeq \alt 4$, and $\gamma_{E^*}$ has trivial restriction to $\Gal(K/F)$, we see that $\gamma_E$ has trivial restriction to $\Gal(K/F)$. Therefore, $E$ is $K$-isogenous to a curve defined over~$F$ (cf. Remark \ref{rk: K-isogenous to a curve over F}). But this is a contradiction with Lemma~\ref{lemma: LF}, which implies that the minimal extension of~$K$ over which this can happen is non-trivial.
\end{proof}

From now on we denote by $\gamma_-\in H^2(G,\{\pm 1\})$ the class corresponding to $\mathrm{B_O}$ and by $\gamma_+$ the one corresponding to $\GL_2(\Z/3\Z)$ (and, as usual $c_+$ and $c_-$ denote cocycles representing them). So far we have seen that 
\begin{align}
  \label{eq:gammaeandgammal}
  \gamma_E = \gamma_{E^*}\cdot \gamma_\pm,\ \ \text{where $\gamma_{E^*}$ is given by \eqref{eq:C_E_star } and $\gamma_\pm\in \{\gamma_+,\gamma_-\}$.}
\end{align}
The next step is to compute the center of $M^{c_E}[G]$. As a previous calculation, we need to determine the center of $M^{c_{\pm}}[G]$.  For this we will use the characterization of the center of a twisted group algebra $M^c[G]$ given in \cite[p. 321]{Ka87}. Given a two-cocycle $c\in Z^2(G,M^*)$, an element $g\in G$ is said to be \emph{$c$-regular} if
\begin{align}\label{eq: regular }
  c(g,h)=c(h,g)\ \ \text{ for all } h \text{ in the centralizer } C_G(g) \text{ of $g$}.
\end{align}
All conjugates of a $c$-regular element are also $c$-regular. Let $X$ denote a set of representatives of the $c$-regular conjugacy classes. For $x\in X$ let $T_x$ denote a system of representatives of $G/C_G(x)$. For every $x\in X$, the element 
\begin{align}\label{eq: formula kx initial}
  k_x = \sum_{g\in T_x} u_g u_x u_g^{-1}
\end{align}
 belongs to the center of $M^{c}[G]$. Moreover,  $\{k_x\}_{x\in X}$ is an $M$-basis of the center. Observe that, by making use of \eqref{eq: multiplication in twisted group algebra}, the element $k_x$ can also be expressed as
\begin{align}\label{eq: formula kx }
   k_x = \sum_{g\in T_x} u_g u_x u_g^{-1} =  \sum_{g\in T_x}c(gx,g^{-1})c(g,x)c(g,g^{-1})^{-1} u_{gxg^{-1}}.
\end{align}

\begin{lemma}
  The center of $M^{c_{\pm}}[G]$ is isomorphic to $M\times M[t]/(t^2 \mp 2)$.
\end{lemma}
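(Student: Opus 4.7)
The plan is to apply directly the characterization of the center of a twisted group algebra recalled above: $Z(M^{c_\pm}[G])$ has $M$-basis $\{k_x\}_{x\in X}$ given by \eqref{eq: formula kx }, where $X$ is a set of representatives of the $c_\pm$-regular conjugacy classes of $G=\sym 4$. The first (and main) step is to show that $X=\{e, g_3, g_4\}$, with $g_3=(123)$ and $g_4=(1234)$. A short manipulation of the cocycle identity yields $c(g,g^a)=c(g^a,g)$ for every $g$ and $a$, so every element whose centralizer is cyclic is automatically $c$-regular; in $\sym 4$ this handles $e$, the 3-cycles and the 4-cycles. For the remaining classes one exploits the interpretation of $\gamma_\pm$ via the central extensions $\mathrm{B_O}$ and $\GL_2(\Z/3\Z)$: inside either double cover, the preimage of the Klein four subgroup $V=\{e,(12),(34),(12)(34)\}$ is the quaternion group, so lifts of the commuting pair $(12),(34)$ anticommute and $c_\pm((12),(34))\neq c_\pm((34),(12))$. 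A parallel quaternionic argument applied to the commuting pair $(12)(34), (13)(24)$ inside $C_G((12)(34))\simeq \dih 4$ rules out double transpositions. Hence the center has $M$-dimension $3$.

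To obtain the algebra structure I would fix explicit normalized cocycles $c_\pm$ by choosing set-theoretic sections of $\mathrm{B_O}\twoheadrightarrow \sym 4$ and $\GL_2(\Z/3\Z)\twoheadrightarrow\sym 4$, write out $k_3$ and $k_4$ via \eqref{eq: formula kx } (sums of $8$, respectively $6$, signed $u_\sigma$'s), and expand $k_4^2$ using the product rule \eqref{eq: multiplication in twisted group algebra}. Since $k_4^2$ lies in the center, its coefficients on transpositions and double transpositions must vanish automatically, so $k_4^2=\alpha k_e+\beta k_3$ for scalars $\alpha,\beta\in M$ extracted by just reading off the $u_e$ and $u_{g_3}$ coefficients of the expansion. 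Analogous (but simpler) calculations of $k_3^2$ and $k_3 k_4$ complete the multiplication table, and then elementary linear algebra produces a primitive idempotent $e_1$ with $Me_1\simeq M$ together with a complementary idempotent $e_2=1-e_1$ on which $k_4 e_2$ satisfies $(k_4 e_2)^2=\pm 2\cdot e_2$, exhibiting the quadratic factor $M[t]/(t^2\mp 2)$.

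The main obstacle is purely combinatorial: the expansion of $k_4^2$ is a sum of $36$ signed terms whose signs are fixed by the chosen section, and one must track normalizations of $c_\pm$ consistently through the manipulations. Which of the two quadratic fields $M(\sqrt{2})$ or $M(\sqrt{-2})$ appears as the $2$-dimensional factor is dictated by the distinction between $\mathrm{B_O}$ and $\GL_2(\Z/3\Z)$: in the former, lifts of transpositions have order $4$ (so square to $-1$), whereas in the latter they have order $2$ (so square to $+1$), and this contrast propagates through the cocycle values $c_\pm$ entering the computation of $\alpha$ and $\beta$.
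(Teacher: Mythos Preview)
Your approach is essentially the paper's: identify the $c_\pm$-regular classes, then carry out the explicit multiplication of the basis elements $k_x$ to pin down the algebra structure. The paper simply delegates both steps to a Magma computation and reads off that $k_{g_4}$ has minimal polynomial $t(t^2\mp 18)$, whereas you give a conceptual argument for the regularity part and then outline the same brute-force expansion for the multiplicative part. The cyclic-centralizer argument for $e$, $g_3$, $g_4$ is correct, and your parity observation that $k_4^2$ can only involve $k_e$ and $k_3$ is the right shortcut for the hand computation.

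One small factual slip to fix: the preimage of $\{e,(12),(34),(12)(34)\}$ is $Q_8$ only in $\mathrm{B_O}$. In $\GL_2(\Z/3\Z)$ the transpositions lift to involutions while the double transposition still lifts to order $4$, so the preimage there is $\dih 4$, not $Q_8$. Fortunately your actual conclusion---that the lifts of $(12)$ and $(34)$ anticommute---holds in both groups (in $\dih 4$ two commuting reflections whose product has order $4$ must anticommute upstairs), so the regularity argument survives. The ``parallel quaternionic argument'' for double transpositions is genuinely quaternionic in both cases, since the preimage of the normal Klein four $V_4\subset\alt 4$ sits inside the binary tetrahedral group and is $Q_8$ for either cocycle.
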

\begin{proof}
This is an explicit computation with the cocycle $c_\pm$. The computation is elementary, but lengthy. For this reason, we have used the software Magma \cite{magma} to carry it out.\footnote{In \url{https://github.com/xguitart/sato-tate} the interested reader can find the Magma script that we used.} We reproduce here the details only for $c_+$; they are very similar for $c_-$. 

To begin with, Magma implements routines that allow for the explicit computation the cocycle $c_+$. Alternatively, one can compute it using the exact sequence 
\begin{align*}
  1\ra \{\pm 1\}\ra \GL_2(\Z/3\Z)\ra \sym 4\simeq G \ra 1
\end{align*}
as follows: For each $\sigma\in G$ fix a lift $\tilde \sigma \in \GL_2(\Z/3\Z)$; then $c_+(\sigma,\tau)=\tilde\sigma \cdot \tilde\tau\cdot \widetilde{\sigma\tau}^{-1}$.

Knowing the values $c_+(\sigma,\tau)$ for all $\sigma,\tau\in \sym 4$ (here we are identifying $G$ with $\sym 4$), one can compute the set $X$: it consists of the conjugacy classes of $\mathrm{Id}$, $(1,2,4)$, and $(1,4,3,2)$ . This already implies that the center has dimension $3$ over $M$.

It remains to compute the structure of the center as an algebra. For this, consider the element $k_y$ with $y=(1,4,3,2)$, which can be computed explicitly by means of \eqref{eq: formula kx } with $c=c_+$. It turns out that
\begin{small} 
 \begin{align}
    k_{y} =& \sum_{g\in T_y}c_+(gy,g^{-1})c_+(g,y)c_+(g,g^{-1})^{-1}u_{gyg^{-1}}\\ =& u_{(1,2,3,4)}+ u_{(1,4,3,2)}+ u_{(1,3,4,2)} +u_{(1,3,2,4)}-u_{(1,4,2,3)}-u_{(1,2,4,3)}.\label{eq: kx explicit}
  \end{align}
\end{small}
Using the multiplication formula in the twisted group algebra, namely $u_\sigma\cdot u_\tau = c_+(\sigma,\tau)u_{\sigma\tau}$, one computes that
\begin{small} 
 \begin{align*}
    k_{y}^2 & = 3(-u_{(1,3,2)}+ u_{(1,2,3)}- u_{(1,4,3)} +u_{(1,3,4)}-u_{(1,4,2)}+2u_\mathrm{Id}\\ &-u_{(2,4,2)} + u_{(1,2,3)}-u_{(2,3,4)})
  \end{align*}
\end{small}
and
\begin{small} 
 \begin{align*}
    k_{y}^3 = 18(u_{(1,2,3,4)}+ u_{(1,4,3,2)}+ u_{(1,3,4,2)}+u_{(1,3,2,4)}-u_{(1,4,2,3)}-u_{(1,2,4,3)}).
  \end{align*}
\end{small}
We see that  $k_y^3-18k_y=0$ and that the powers of $k_y$ do not satisfy any linear relation of lower degree. Thus the minimal polynomial of $k_y$ is $t(t^2-18)$ which implies that the center of $M^{c_\pm}[G]$ is isomorphic to $M\times M[t]/(t^2 - 2)$. 

The same calculation for $c_-$ gives the minimal polynomial $t(t^2+18)$.
\end{proof}
\begin{lemma}\label{lemma:the center}
  The center of $M^{c_E}[G]$ is isomorphic to $M\times M[t]/(t^2\mp 2m)$.
\end{lemma}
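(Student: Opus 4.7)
The plan is to reduce the computation to the preceding lemma by exploiting the factorization $c_E = c_{E^*}\cdot c_\pm$ provided by \eqref{eq:gammaeandgammal}, with $c_{E^*}$ the cocycle described in \eqref{eq:C_E_star }. The key structural observation is that $c_{E^*}$ factors through the quotient $\Gal(F/k)\simeq \cyc 2$ and takes the value $m$ precisely when both of its arguments project nontrivially to $\Gal(F/k)$. In particular $c_{E^*}$ is symmetric, so the $c_E$-regularity condition coincides with the $c_\pm$-regularity condition; hence I may use the same set of representatives $X=\{[\mathrm{Id}],\,[(1,2,4)],\,[y]\}$ with $y=(1,4,3,2)$ as in the previous lemma.

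My first step is to verify that the element $k_y\in M^{c_E}[G]$ given by \eqref{eq: formula kx } admits the same formal expression as a linear combination of $u_\sigma$'s as the one displayed in \eqref{eq: kx explicit}. This amounts to checking, via a short case analysis on whether $g|_F$ is trivial, that each correction factor $c_{E^*}(gy,g^{-1})\,c_{E^*}(g,y)\,c_{E^*}(g,g^{-1})^{-1}$ equals $1$: when $g|_F=\mathrm{Id}$, all three $c_{E^*}$-values are $1$; when $g|_F\neq \mathrm{Id}$, one has $\overline{gy}=\mathrm{Id}$ in $\Gal(F/k)$, so the first factor is $1$ while the other two both equal $m$ and cancel. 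The support of $k_y$ thus consists entirely of $4$-cycles in $\sym 4$, each projecting nontrivially to $\Gal(F/k)$.

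The remaining computation is then essentially a weighted reprise of the previous lemma. When expanding $k_y^2$ inside $M^{c_E}[G]$, every basis product $u_\sigma u_\tau$ involves two $4$-cycles, hence picks up the extra factor $c_{E^*}(\sigma,\tau)=m$ relative to the analogous product in $M^{c_\pm}[G]$; so $k_y^2$ equals $m$ times the formal expression for $k_y^2$ obtained before, whose support lies in $\alt 4$. For $k_y^3=k_y^2\cdot k_y$, the left factor is now supported on $\alt 4$ and projects trivially to $\Gal(F/k)$, so $c_{E^*}$ contributes $1$ throughout and no further power of $m$ appears. From $k_y^3=\pm 18\,k_y$ in $M^{c_\pm}[G]$, I therefore deduce that $k_y^3=\pm 18m\,k_y$ in $M^{c_E}[G]$, so that the minimal polynomial of $k_y$ is $t(t^2\mp 18m)$. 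Since $18m$ differs from $2m$ by a square, this yields the stated decomposition $M\times M[t]/(t^2\mp 2m)$.

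I expect the main obstacle to be the bookkeeping in the first step above, namely the careful tracking of the $c_{E^*}$-contributions to $k_y$, $k_y^2$ and $k_y^3$. What makes the calculation collapse so cleanly is that $c_{E^*}$ is identically $1$ whenever one of its arguments lies in $\alt 4$; combined with the fact that $k_y$ is supported on $4$-cycles while $k_y^2$ is supported on $\alt 4$, this channels all of the $c_{E^*}$-information into a single global factor of $m$ in the final minimal polynomial, which is precisely the source of the $m$ appearing in the statement.
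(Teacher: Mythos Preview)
Your proof is correct and follows essentially the same approach as the paper's: both exploit the factorization $c_E=c_{E^*}\cdot c_\pm$ with $c_{E^*}$ symmetric and inflated from $\Gal(F/k)$, identify the same $c_E$-regular classes, verify that the $c_{E^*}$-correction in the formula for $k_y$ is trivial, and then track the single global factor of $m$ picked up in $k_y^2$ to obtain the minimal polynomial $t(t^2\mp 18m)$. The only cosmetic difference is that the paper chooses the transversal $T_y$ to lie entirely outside $\alt 4$, whereas you handle both parities of $g$ by a short case analysis; the computations are otherwise identical.
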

\begin{proof}
 Again we just explicit the calculations for $c_+$; the case of $c_-$ is analogous. By \eqref{eq:gammaeandgammal} we can assume that $c_E = c_{E^*}\cdot c_+$. Recall that $c_{E^*}$ is a symmetric cocycle (cf. formula \eqref{eq:C_E_star }) which lies in the image of the inflation map $$H^2(\Gal(F/k),M^*)\ra H^2(G,M^*).$$ Since $c_{E^*}$ is symmetric, in view of \eqref{eq: regular } an element $g\in G$ is $c_E$-regular if and only if it is $c_+$-regular. This implies that the center of $M^{c_E}[G]$ has the same dimension as the center of $M^{c_+}[G]$. We next determine its algebra structure.

One of the elements of the center of $M^{c_E}[G]$ is
  \begin{small} 
 \begin{align*}
    \tilde k_{y} =& \sum_{g\in T_y}c_E(gy,g^{-1})c_E(g,y)c_E(g,g^{-1})^{-1}u_{gyg^{-1}},
  \end{align*}
\end{small}
where we can take $y= (1,4,2,3)$ as before. Using formula \eqref{eq:C_E_star } and the fact that $g\not \in \alt 4$ for $g\in T_y$ it is easy to check that $$c_{E^*}(gy,g^{-1})c_{E^*}(g,y)c_{E^*}(g,g^{-1})^{-1}=1.$$ Therefore, we have that
\begin{small} 
 \begin{align}
    \tilde k_{y} =& \sum_{g\in T_y}c_+(gy,g^{-1})c_+(g,y)c_+(g,g^{-1})^{-1}u_{gyg^{-1}}\\ =& \label{eq: ky tilde}
u_{(1,2,3,4)}+ u_{(1,4,3,2)}+ u_{(1,3,4,2)} +u_{(1,3,2,4)}-u_{(1,4,2,3)}-u_{(1,2,4,3)}.
  \end{align}
\end{small}
Observe that $\tilde k_y$ has the same expression as the element $k_y$ found in \eqref{eq: kx explicit} in terms of the basis $\{u_\sigma\}_{\sigma \in G}$. We remark that the two elements lie in different algebras though: $k_y$ lies in $M^{c_\pm}[G]$ and $\tilde k_y$ lies in $M^{c_E}[G]$. Thus, in order to compute $\tilde k_y^2$ we need to use now the multiplication of the twisted group algebra $M^{c_E}[G]$: $$u_\sigma\cdot u_\tau = c_E(\sigma,\tau)u_{\sigma\tau}.$$ But observe that the group elements appearing in \eqref{eq: ky tilde} do not belong to $\alt 4$. Therefore, in order to compute $\tilde k_y^2$ only the values $c_E(\sigma,\tau)$ with $\sigma,\tau\not \in \alt 4$ are involved in the calculation. For $\sigma,\tau\not\in A_4$ we have that $c_E(\sigma,\tau)=m\cdot c_+(\sigma,\tau)$. This means that $\tilde k_y^2$ has the same expression in terms of the basis as $m\cdot k_y^2$; that is
\begin{small} 
 \begin{align*}
    \tilde k_{y}^2 & = 3 m (-u_{(1,3,2)}+ u_{(1,2,3)}- u_{(1,4,3)} +u_{(1,3,4)}-u_{(1,4,2)}+2u_\mathrm{Id}\\ &-u_{(2,4,2)} + u_{(1,2,3)}-u_{(2,3,4)})
  \end{align*}
\end{small}

Now the group elements  appearing in the above expression of $\tilde k_y^2$ belong to $A_4$. Therefore, in the product $\tilde k_y^3 = \tilde k_y^2\cdot \tilde k_y$ all the products of basis elements are of the form $u_\sigma\cdot u_\tau$ with $\sigma\in \alt 4$ and $\tau \not \in \alt 4$, so that $c_E(\sigma,\tau)=c_+ (\sigma,\tau)$. This implies that $\tilde k_y^3$ has the same expression in terms of the basis elements as $m\cdot k_y^3$, namely
\begin{small} 
 \begin{align*}
    \tilde k_{y}^3 = 18 m  (u_{(1,2,3,4)}+ u_{(1,4,3,2)}+ u_{(1,3,4,2)}+u_{(1,3,2,4)}-u_{(1,4,2,3)}-u_{(1,2,4,3)}).
  \end{align*}
\end{small}
 Therefore, $\tilde k_y^3 - 18m \tilde k_y= 0$ and the minimal polynomial of $\tilde k_y$ is $t^3- 18 mt$ as we aimed to see.
\end{proof}
From the above lemma we see that $M^{c_E}[G]$ decomposes into the product of either two or three simple algebras, depending on whether $\mp 2 m$ is a square in $M$ or not. Before determining the structure of the center, we record the following piece of information about $M^{c_E}[G]$, which we will use in \S\ref{section: an example} below. 

\begin{lemma}\label{lemma:contains M2}
  Each simple factor of $M^{c_E}[G]$ contains $\mathrm{M}_2(M)$ as a subalgebra.
\end{lemma}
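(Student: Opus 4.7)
The plan is to realize $M^{c_E}[G]\simeq \End(R)$ as the equivariant endomorphism algebra of an induced representation of the double cover $\tilde G := \Gal(L/k)$ of $G$, which lies in $\{\mathrm{B_O}, \GL_2(\Z/3\Z)\}$. Since $E_1$ is $L$-isogenous to $E^*_L$, we have $R_L\sim (E^*_L)^{24}$ and hence $\End(R_L)\simeq \End_M(V)$, where $V := \Hom(E^*_L, R_L)$ is a free $M$-module of rank $24$ carrying a natural $\tilde G$-action; under this identification $\End(R)\simeq \End_{M[\tilde G]}(V)$.

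The first step is to identify $V$ as the induced representation $\mathrm{Ind}_{\Gal(L/K)}^{\tilde G}(\chi_-)$, where $\chi_-$ is the non-trivial character of the central subgroup $\Gal(L/K) \simeq \{\pm 1\} \subset \tilde G$; this is because any $L$-isogeny $E_1\to E^*_L$ transforms by $-1$ under the non-trivial element of $\Gal(L/K)$ (otherwise it would be $K$-rational). By Frobenius reciprocity, $V\simeq V_1^{\oplus 2} \oplus V_2^{\oplus 2} \oplus V_3^{\oplus 4}$ over $\overline{M}$, where $V_1, V_2, V_3$ are the three faithful (spin) irreducibles of $\tilde G$, of $\overline{M}$-dimensions $2, 2, 4$. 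Lemma~\ref{lemma:the center} combined with Proposition~\ref{prop:m} shows that the centre of $M^{c_E}[G]$ is $M\times M\times M$, forcing each $V_i$ to be individually $M$-rational (no Galois fusion occurs). Writing $D_i := \End_{M[\tilde G]}(V_i)$ for the division $M$-algebra with Schur index $t_i$, we obtain
\begin{equation*}
\End(R) \simeq \prod_{i=1}^3 \mathrm{M}_{(\dim V_i)/t_i}(D_i),
\end{equation*}
and the lemma amounts to showing $(\dim V_i)/t_i \geq 2$ for each $i$.

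For the two $2$-dimensional spin factors I would invoke the $M$-algebra automorphism $u_\sigma\mapsto \mathrm{sgn}(\sigma)\,u_\sigma$ of $M^{c_E}[G]$, induced by tensoring the projective representations by the sign character of $\sym 4$; this interchanges $V_1$ and $V_2$, so their simple factors are isomorphic as $M$-algebras. Since one of them is the factor $\End(A^2) = \mathrm{M}_2(M)$ corresponding to $A$ (by Lemma~\ref{eq: lemma A is a simple factor of R}), both equal $\mathrm{M}_2(M)$. For the $4$-dimensional factor, $V_3$ arises as the induction from the binary tetrahedral subgroup $\mathrm{B_T}\subset \tilde G$ of its $2$-dimensional spin irreducible with rational character; the associated quaternion division algebra is $(-1,-1)_\Q$, which by Proposition~\ref{proposition: M}~$iii)$ is split over $M$. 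Hence $V_3$ has Schur index $1$ over $M$, and the corresponding factor is $\mathrm{M}_4(M)$.

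The main obstacle is the last step: rigorously pinning down the Schur index of $V_3$ over $M$, which requires tracking how the division algebras behave under induction from $\mathrm{B_T}$; the $(-1,-1)_\Q$-splitting from Proposition~\ref{proposition: M}~$iii)$ is the key input that makes the quaternionic $\mathrm{B_T}$-subrepresentation realizable over $M$ with Schur index $1$.
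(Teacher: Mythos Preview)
Your approach has a genuine gap at the very first step. The module $V=\Hom(E^*_L,R_L)$ does \emph{not} carry a natural $M[\tilde G]$-module structure: for $\tau\in\tilde G=\Gal(L/k)$ not fixing $F$, conjugation sends $\psi\colon E^*_L\to R_L$ to ${}^\tau\psi\colon {}^\tau E^*_L\to R_L$, and ${}^\tau E^*\neq E^*$ since $E^*$ is only defined over $F$, not over $k$. To land back in $V$ you must precompose with a chosen isogeny $E^*\to{}^\tau E^*$, and then the cocycle $c_{E^*}$ enters; what you obtain is a module over $M^{c_E}[G]$, not over $M[\tilde G]$. In fact $\gamma_E=\gamma_{E^*}\cdot\gamma_\pm$ does \emph{not} lie in the image of $H^2(G,\{\pm1\})\to H^2(G,M^*)$ (its image in $H^2(G,M^*/\{\pm1\})$ is governed by $m\bmod\pm(M^*)^2$, and $m=\pm2$ is not $\pm$ a square in $M$). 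Consequently the simple factors of $M^{c_E}[G]$ are \emph{not} those of the spin summand of $M[\tilde G]$: compare the center $M\times M[t]/(t^2\mp2)$ of $M^{c_\pm}[G]$ (computed in the paper) with the center $M^3$ of $M^{c_E}[G]$. Your Schur-index computation for $V_3$ via $\mathrm{B_T}$ therefore pertains to the wrong algebra.

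Even setting this aside, your argument invokes both Proposition~\ref{prop:m} and Lemma~\ref{eq: lemma A is a simple factor of R}, i.e.\ the existence of $A$. The paper deliberately avoids this (see Remark~\ref{rk: validity of these lemmas in general}): Lemma~\ref{lemma:contains M2} must hold for \emph{any} $k$-curve $E$ completely defined over $K$ with cohomology class as in~\eqref{eq:gammaeandgammal}, because in \S\ref{section: an example} it is applied to a curve constructed \emph{before} any such $A$ is known to exist.

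The paper's argument is both simpler and more general. Let $H\subset G$ be the Klein four-subgroup; since $H\subset\Gal(K/F)$ and $c_{E^*}$ is trivial there, $c_E|_H=c_\pm|_H$, and one checks directly that $u_s^2,u_t^2\in\{\pm1\}$ and $u_su_t=-u_tu_s$. Thus the $M$-span of $\{u_h:h\in H\}$ is a quaternion algebra $(\pm1,\pm1)_M$, which is $\M_2(M)$ because $M$ splits $(-1,-1)_\Q$ (Proposition~\ref{proposition: M}\,$iii)$). Since $\M_2(M)$ is simple, its image under the projection to each simple factor of $M^{c_E}[G]$ is injective; this is the whole proof. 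Note that this bypasses any knowledge of the center, of $A$, or of Schur indices.
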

\begin{proof}
  The first step is to prove that $M^{c_E}[G]$ contains a subalgebra isomorphic to $\mathrm{M}_2(M)$. Indeed, let $H$ be the unique normal subgroup of $G$ isomorphic to $\cyc 2\times \cyc 2$. The only normal subgroup of $\sym 4$ isomorphic to $\cyc 2 \times \cyc 2$ is contained in $\alt 4$; since $\Gal(K/F)\simeq \alt 4$ we see that $H\subseteq \Gal(K/F)$. Thus $c_E(x,y)=c_\pm(x,y)$ for $x,y\in H$ because $c_{E^*}$ has trivial restriction to $\Gal(K/F)$. 

Now let $s$ and $t$ be generators of $H$. The elements $u_1$, $u_s$, $u_t$, and $u_{st}$ generate a subalgebra of $M^{c_E}[G]$ isomorphic to $(a,b)_M$ with $a,b\in\{\pm 1\}$. Indeed, one can check that
\begin{align}\label{eq: css and ctt}
  c_\pm(s,s) \in\{\pm 1 \}; \ c_\pm(t,t) \in \{ \pm 1\}; \ c_\pm(s,t)c_\pm(t,s)=-1,
\end{align}
so that $$u_s^2= \pm 1, \ u_t^2=\pm 1;\  u_su_t=-u_tu_s.$$ Therefore, $u_s$ and $u_t$ generate an algebra which is isomorphic to $(\pm 1,\pm 1 )_M$. As we have observed before, by Proposition \ref{proposition: M} $iii)$ the field $M$ splits the algebra $(-1,-1)_\Q$, so that $(\pm 1,\pm 1)_M\simeq\M_2(M)$.

Let $B$ be a simple factor of $M^{c_E}[G]$ with projection $\pi\colon M^{c_E}[G]\ra B$. The composition $$\M_2(M)\hookrightarrow M^{c_E}[G]\stackrel{\pi}{\ra} B$$ is a homomorphism of $M$-algebras. Since $\M_2(M)$ is simple, it does not have non-trivial ideals and the kernel of the above composition is trivial. Thus the homomorphism $\M_2(M)\ra B$ is injective and $\M_2(M)$ is a subalgebra of the simple factor~$B$.
\end{proof}
\begin{remark}\label{rk: validity of these lemmas in general}
  Observe that in the proof of Lemmas~\ref{lemma:the center} and~\ref{lemma:contains M2} we have not used that $E$ is the elliptic quotient of $A$. This will play a role in Proposition~\ref{prop: center splits} below, but the statements of Lemma \ref{lemma:the center} and Lemma \ref{lemma:contains M2} are valid for any $k$-curve completely defined over $K$, with $\Gal(K/k)\simeq \sym 4$ and whose cohomology class is as in \eqref{eq:gammaeandgammal}. This will be used in \S \ref{section: an example}.
\end{remark}
  \begin{proposition}\label{prop: center splits}
 The center of $M^{c_E}[G]$ is isomorphic to $M\times M\times M$.
 \end{proposition}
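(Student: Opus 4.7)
The strategy is to combine Lemma~\ref{lemma:the center} with a dimension count to rule out that $\mp 2m$ is a non-square in $M$. By that lemma, $Z(M^{c_E}[G])\simeq M\times M[t]/(t^2\mp 2m)$, which has three simple factors if $\mp 2m$ is a square in $M$ and only two otherwise. Since a semisimple algebra and its center have the same number of simple factors, it suffices to show that $M^{c_E}[G]$ decomposes into exactly three simple factors.

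Under the isomorphism $\End(R)\simeq M^{c_E}[G]$, simple factors correspond to isotypic components of $R$. Lemma~\ref{eq: lemma A is a simple factor of R} provides the decomposition $R\sim A^2\times A'$ with $A$ simple, $\End(A)\simeq M$, and with no simple factor of $A'$ isogenous to $A$. Thus $A^2$ contributes the simple factor $\End(A^2)\simeq \M_2(M)$, of $M$-dimension $4$. Since $\dim_M M^{c_E}[G]=|G|=24$, the complementary simple factor(s) must have total $M$-dimension $20$.

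I would then argue by contradiction: if $M^{c_E}[G]$ had only two simple factors, the complementary one would be a single simple algebra with center the quadratic extension $F_1:=M[t]/(t^2\mp 2m)$ of $M$, and its $M$-dimension $20$ would force its $F_1$-dimension to equal $10$. This contradicts the fact that any central simple algebra has square dimension over its center. Hence $\mp 2m$ is a square in $M$, $M[t]/(t^2\mp 2m)\simeq M\times M$, and $Z(M^{c_E}[G])\simeq M\times M\times M$ as claimed.

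The main point is the elementary arithmetic observation that $10$ is not a perfect square; this is what translates the algebraic input of Lemma~\ref{eq: lemma A is a simple factor of R} into an obstruction to $\mp 2m$ being a non-square. The argument depends crucially on the $M$-dimension $|\sym 4|=24$ of $M^{c_E}[G]$ together with the identification $\End(A^2)\simeq \M_2(M)$ coming from $\End(A)\simeq M$.
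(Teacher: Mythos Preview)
Your argument is correct and is essentially the same as the paper's: both use Lemma~\ref{lemma:the center} together with Lemma~\ref{eq: lemma A is a simple factor of R} to identify a simple factor $\M_2(M)$ of $M$-dimension $4$, and then rule out the two-factor case by observing that the remaining $M$-dimension $20$ would force the $L$-dimension of the second central simple factor to be $10$, which is not a square. The paper writes this as $r_1^2c_1^2 + 2r_2^2c_2^2 = 24$ with $r_1=2$, $c_1=1$, leaving $r_2^2c_2^2 = 10$ with no integer solution, which is exactly your observation.
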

 \begin{proof}
 From Lemma \ref{lemma:the center} the center of $M^{c_E}[G]$ is isomorphic to either $M\times M \times M$ or $M\times L$, with $[L\colon M]=2$. Suppose that the center is $M\times L$. This implies that $$M^{c_E}[G]\simeq M_{r_1}(B_{1})\times M_{r_2}(B_{2}),$$ where $B_{1}$ stands for an $M$-central division algebra, say of Schur index $c_1$, and $B_{2}$ for an $L$-central division algebra of Schur index $c_2$. Since $M^{c_E}[G]$ has dimension~$24$ over $M$ we have that
 \begin{align}\label{eq: equals 24}
   r_1^2 c_1^2 + 2 r_2^2 c_2^2 = 24.
 \end{align}
 Now by Lemma \ref{eq: lemma A is a simple factor of R} one of the simple factors of $M^{c_E}[G]$ is isomorphic to $\M_2(M)$. This forces $r_1=2$ and $c_1=1$, but then \eqref{eq: equals 24} does not have any solution which is a contradiction.
\end{proof}

\begin{proof}
[Proof of Proposition \ref{prop:m}]
The statement is now a direct consequence of Proposition \ref{prop: center splits} and Lemma \ref{lemma:the center}. 
\end{proof}

{\bf Some explicit computations.} To illustrate the usefulness of Proposition \ref{prop:m} we have computed the cohomology class associated to certain curves with CM by a field $M$ of class number $2$, in the particular case where $k=M$. The calculations that we next describe have been done using Sage.\footnote{The reader can find the scripts in \url{https://github.com/xguitart/sato-tate}.} 

Let $M$ be one of the imaginary quadratic fields of Table \ref{table: ms}. Let $j_0$ and $j_1$ be the roots of the Hilbert class polynomial attached to the discriminant~$D$ of~$M$. The field $\Q(j_0)$ is real quadratic and the Hilbert class field of $M$ is given by $H=M\cdot \Q(j_0)$. In particular, since we take $k=M$, the field~$F$ coincides with~$H$.

As a first step,  we have computed an elliptic curve $E_0/\Q(j_0)$ with $j$-invariant~$j_0$ (this is easy; one can use for instance the explicit formula of \cite[Prop. 1.4]{Sil86}). Then, with a simple search method we have been able to find an element $\beta\in \Q(j_0)$ such that the curve $(E_0)_\beta$, the twist of $E_0$ by $\beta$, is $F$-isogenous to its $\Gal(F/k)$-conjugate.\footnote{Since $D$ satisfies $\mathrm{(NE)}$ there exists a curve over $F$, with CM by $M$, and which is $F$-isogenous to its $\Gal(F/k)$-conjugate. However, it is not clear to us that one can always find a model over $\Q(j_0)$ of such curve, as we did for the curves of Table \ref{table: ms}.} Put $E^* = (E_0)_\beta$, and denote by $\sigma$ a generator of $\Gal(F/k)$. 

Using Sage routines we have computed an isogeny $\mu_\sigma \colon  E^* \ra \acc \sigma E^*$ explicitly; that is, we have found the rational functions that define $\mu_\sigma$. Then it is easy to compute its Galois conjugate $\acc\sigma\mu_\sigma$ and the composition $\acc\sigma\mu_\sigma\circ\mu_\sigma$. Having explicitly the rational functions giving the isogeny $\acc\sigma\mu_\sigma\circ\mu_\sigma$ allows for the calculation of the kernel polynomial of this isogeny. In each of the entries of Table \ref{table: ms} we have checked that such kernel polynomial agrees with the kernel polynomial of the isogeny ``multiplication by $m$'', where $m$ is the one displayed in the second column. This means that $\acc\sigma\mu_\sigma\circ\mu_\sigma$ equals multiplication by $\pm m$ (the indeterminacy in the sign comes from the fact that the kernel polynomial determines the isogeny up to composing with $-1$). We remark that the isogeny $\mu_\sigma$ that we begin with is non-canonical; a different choice of $\mu_\sigma$ would change $m$ by an element of $(M^*)^2$.

As an example, we give the computations for the case $k=M=\Q(\sqrt{-40})$. The Hilbert class field in this case is $F=\Q(\sqrt{-40},\sqrt{5})$. The curve 
\begin{align*}
E^*\colon  y^2 = &\ x^3 + (135\sqrt{5} - 1125)x + 6480\sqrt{5} - 54000
\end{align*}
has CM by the ring of integers of $M$. There is an isogeny $\mu_\sigma \colon E^*_F\ra \acc\sigma E^*_F$ with kernel polynomial  $$x + 6\sqrt{5} - 30.$$  The Galois conjugate $\acc\sigma\mu_\sigma$ has kernel polynomial $$x - 6\sqrt{5} - 30.$$ One can check that the kernel polynomial of $\acc\sigma\mu_\sigma\circ \mu_\sigma$ equals the kernel polynomial of the multiplication by $2$ map on $E^*$.

On Table~\ref{table: ms}, we have computed the value of $m\mod{(M^*)^2}$ for some discriminants~$D$ of quadratic imaginary fields $M$ of class number $2$. Note that for $M=\Q(\sqrt{-35})$, $\Q(\sqrt{-51})$, or $\Q(\sqrt{-115})$, none of $2m$ or $-2m$ is a square of $M^*$, and thus Proposition \ref{prop:m} provides an obstruction to the existence of an abelian surface $A/k$ with $\Gal(K/k)\simeq \sym 4$ and the elliptic quotient $E$ having CM by~$M$. Observe, however, that for $D=-40$ and $D=-24$, we have that either $2m$ or $-2m$ is a square in $M$, since $m=\pm 2$. Therefore Proposition~\ref{prop:m} does not yield any obstruction in these cases. In fact, in \S\ref{section: an example} below we will exhibit an example of an abelian surface $A/k$ with $\Gal(K/k)\simeq \sym 4$ and the elliptic quotient $E$ having CM by~$\Q(\sqrt{-40})$.
\begin{table}[h]
\begin{center}
\begin{small}
\begin{tabular}{c|c}\hline
$D$ (discriminant of $M$) & $ m\mod{(M^*)^2} $ \\ \hline
$-24$ & $\pm 2$\\
$-35$ & $\pm 5$\\
$-40$ & $\pm 2$\\
$-51$ & $\pm 3$\\
$-115$ & $ \pm 5$\\\hline
\end{tabular}
\vspace{6pt}
\caption{Values of $m\mod{(M^*)^2} $ for certain curves $E^*$ with CM by fields of class number $2$.}\label{table: ms}
\end{small}
\end{center}
\end{table}

\subsection{Abelian surfaces over $\Q$}\label{section: imquad}

In this section, let $A$ be an abelian surface defined over $k$ satisfying that $A_{\Qbar}\sim E^2$, where $E$ is an elliptic curve over $\Qbar$ with CM, say by a quadratic imaginary field~$M$. Let $K/k$ be the minimal extension such that $\End(A_K)\simeq \End(A_\Qbar)$. 
Denote by $\Mm^{1}$ (resp. $\Mm^{2}$) the finite set of quadratic imaginary fields of class number $1$ (resp. of class number $2$). We also denote by $\Mm^{2,2}$ the finite set of imaginary quadratic fields with class group isomorphic to $\cyc 2 \times \cyc 2$. On Table \ref{table: Ms} below we list the discriminants $D$ of the quadratic imaginary fields in $\Mm^1$, $\Mm^2$, and $\Mm^{2,2}$, respectively. 

\begin{table}[h]
\begin{center}
\begin{small}
\begin{tabular}{c|c}\hline
$\Mm^1$  &$-3,-4,-7,-8,-11,-19,-43,-67,-163$\\ \hline
$\Mm^2$ & $ -15,-20,-24,-35,-40,-51,-52,-88,-91,-115$\\
$ $ & $-123,-148,-187,-232,-235,-267,-403,-427$\\ \hline
$\Mm^{2,2}$ & $-84,-120,-132,-168,-195,-228,-280,-312,-340,-372,-408,-435$\\
$  $ &$-483,-520,-532,-555,-595,-627,-708,-715,-760,-795,-1012,-1435$\\\hline
\end{tabular}
\vspace{6pt}
\caption{Discriminants of the imaginary quadratic fields with class group isomorphic to $\cyc 1$, $\cyc 2$, and $\cyc 2 \times \cyc 2$.}\label{table: Ms}
\end{small}
\end{center}
\end{table}
\begin{theorem}\label{theorem: absurfQ} Let $A$ be an abelian surface defined over a number field $k$ that is $\Qbar$-isogenous to the square $E^2$ of an elliptic curve defined over $\Qbar$ with CM by $M$. If $k$ is either $\Q$ or $M$, the set of possibilities for $M$ provided that $\Gal(K/M)\simeq G$ is contained in $\Mm(G)$, where the set $\Mm(G)$ is as defined on Table \ref{table: M}.
\begin{table}[h]
\begin{center}
\setlength{\extrarowheight}{0.5pt}
\vspace{6pt}
\begin{tabular}{c|cc}\hline
$\Gal(K/M)$ & $\Mm(\Gal(K/M))$\\\hline
$\cyc 1$ & $\Mm^1$\\
$\cyc 2$ & $\Mm^1\cup \Mm^2$ \\
$\cyc 3$ & $\Mm^1$\\
$\cyc 4$ & $\{\Q(\sqrt{-1}),\Q(\sqrt{-2})\}\cup \Mm^2$\\
$\cyc 6$ & $\{\Q(\sqrt{-3})\}\cup \Mm^2$\\
$\dih 2$ & $\Mm^1\cup \Mm^2\cup \Mm^{2,2}$\\
$\dih 3$ & $\Mm^1\cup \Mm^2$\\
$\dih 4$ & $\{\Q(\sqrt{-1}),\Q(\sqrt{-2})\}\cup \Mm^2\cup \Mm^{2,2}$\\
$\dih 6$ & $\{\Q(\sqrt{-3})\}\cup \Mm^2\cup \Mm^{2,2}$\\
$\alt 4$ & $\Mm^1\setminus \{\Q(\sqrt{-7})\}$\\
$\sym 4$ & $\{\Q(\sqrt{-1}),\Q(\sqrt{-2})\}\cup \Mm^2\setminus \{\Q(\sqrt{-15}),\Q(\sqrt{-35}),\Q(\sqrt{-51}),\Q(\sqrt{-115})\}$\\\hline
\end{tabular}
\vspace{6pt}
\caption{Possibilities for the field $M$ depending on $\Gal(K/M)$.}\label{table: M}
\end{center}
\end{table}
\end{theorem}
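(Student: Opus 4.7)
I first reduce to the case $k=M$: if $A$ is defined over $\Q$, the base change $A_M$ satisfies the same hypotheses over $M$ with the same minimal endomorphism field $K$, and since Table~\ref{table: M} is indexed by $G=\Gal(K/M)$ the conclusion transfers. Now assume $k=M$. Apply Proposition~\ref{proposition: biquad} with the refinement of Remark~\ref{rk: field of def}: take $E^*$ to have CM by the maximal order of $M$ and $F=M(j(E^*))$, which equals the Hilbert class field $H$. Then $\Gal(F/M)\simeq\Cl(M)$, and Proposition~\ref{proposition: biquad} forces $\Cl(M)$ to embed in $\cyc 2^{r(A)}$, with $r(A)$ read off Table~\ref{table: rA}. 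This already yields the rows of Table~\ref{table: M} in which no group-theoretic refinement is needed, namely $G\in\{\cyc 1,\cyc 2,\cyc 3,\dih 2,\dih 3\}$.

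For the other rows I consider each admissible shape of $\Cl(M)\in\{1,\cyc 2,\cyc 2\times\cyc 2\}$ separately and identify $\Gal(K/F)$ as the corresponding normal subgroup of $G$ of index $|\Cl(M)|$; whenever this quotient contains an element of order $4$ or $6$, or a copy of $\alt 4$ or $\sym 4$, Proposition~\ref{proposition: M} restricts $M$. For example, $G=\cyc 4$ with $\Cl(M)=1$ gives $\Gal(K/F)=\cyc 4$ and hence $M\in\{\Q(\sqrt{-1}),\Q(\sqrt{-2})\}$ by part $i)$, while $\Cl(M)\simeq\cyc 2$ leaves $\Gal(K/F)\simeq\cyc 2$ and no further restriction. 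The rows $\cyc 6$, $\dih 4$, $\dih 6$ are handled by the same template, taking care in the dihedral cases to run separately over the two classes of index-$2$ normal subgroups (those yielding $\cyc 4$ or $\cyc 6$ trigger the restriction, those yielding $\cyc 2\times\cyc 2$ or $\dih 3$ do not). The row $\alt 4$ follows from part $iii)$ of Proposition~\ref{proposition: M}, which singles out $\Q(\sqrt{-7})$ as the unique field of $\Mm^1$ failing to split $(-1,-1)_\Q$.

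The main obstacle is the $\sym 4$ row, where $r(A)=1$. If $\Cl(M)=1$ then $F=M$ and $\Gal(K/F)\simeq\sym 4$, so part $iv)$ of Proposition~\ref{proposition: M} forces $M=\Q(\sqrt{-2})$. If $\Cl(M)\simeq\cyc 2$ then $F=H$ and $\Gal(K/F)=\alt 4$ is the unique index-$2$ normal subgroup, so part $iii)$ forces $M$ to split $(-1,-1)_\Q$; among $\Mm^2$ this eliminates only $\Q(\sqrt{-15})$ (the unique member with squarefree part $\equiv 7\pmod 8$). To rule out $\Q(\sqrt{-35}),\Q(\sqrt{-51}),\Q(\sqrt{-115})$ I invoke Proposition~\ref{prop:m}: each has odd discriminant so condition $\mathrm{(NE)}$ is automatic, and none lies in $\{-3,-4,-8\}$. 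Substituting the values $m=\pm 5,\pm 3,\pm 5$ read off Table~\ref{table: ms}, one verifies directly that $\pm 2m\in\{\pm 10,\pm 6,\pm 10\}$ is a square in none of $\Q(\sqrt{-35}),\Q(\sqrt{-51}),\Q(\sqrt{-115})$ (the listed integers are neither rational squares nor of the form $-35q^2$, $-51q^2$, $-115q^2$ with $q\in\Q$), contradicting Proposition~\ref{prop:m} and completing Table~\ref{table: M}.
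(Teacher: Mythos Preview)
Your proof is correct and follows essentially the same approach as the paper's: reduce to $k=M$, use Proposition~\ref{proposition: biquad} and Remark~\ref{rk: field of def} to identify $F$ with the Hilbert class field and bound $\Cl(M)$ by $\cyc 2^{r(A)}$, then invoke Proposition~\ref{proposition: M} for the restrictions in the $\cyc 4,\cyc 6,\dih 4,\dih 6,\alt 4,\sym 4$ rows, and finally Proposition~\ref{prop:m} together with Table~\ref{table: ms} for the three extra exclusions in the $\sym 4$ row. The only cosmetic differences are that you perform the $\Q\to M$ reduction at the start rather than at the end, and you spell out the case split over the possible index-$2$ normal subgroups in the dihedral cases a bit more explicitly than the paper does (the paper simply notes that when $\Cl(M)$ is nontrivial no further restriction is needed for those rows).
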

\begin{proof}
  Suppose first that $k=M$. By Proposition \ref{proposition: biquad}, $E$ admits a model $E^*$ up to isogeny defined over a subextension $F/k$ of $K/k$, with $\Gal(F/k)\simeq \cyc 2^{r}$ for some $r\leq r(A)$, where $r(A)$ is as defined in Remark~\ref{remark: r(A)}. By Remark~\ref{rk: field of def} we can actually suppose that $E^*$ is  an elliptic curve with CM by the maximal order of $M$, and that $F=k(j(E^*))$. The fact that $k=M$ implies that $F$ is  the Hilbert class field of $M$.  Therefore, $\Gal(F/M)=\Gal(F/k)\simeq \cyc{2}^{r}$ with $r\leq r(A)$ and necessarily $M$ lies in $\Mm^1\cup \Mm^2\cup \Mm^{2,2}$. This proves the rows of Table \ref{table: M} corresponding to $\cyc 1$, $\cyc 2$, $\cyc 3$, $\dih 2$, and $\dih 3$.

The remaining rows except of the last one follow by combining the above result with Proposition~\ref{proposition: M}. Indeed, for rows $\cyc 4$ and $\dih 4$ observe that, if $M$ has class number $1$, then $F=M$ and $\Gal(K/F)$ contains an element of order $4$, so that $i)$ of Proposition~\ref{proposition: M} forces $M$ to be $\Q(\sqrt{-1})$ or $\Q(\sqrt{-2})$. Similarly, rows $\cyc 6$ and $\dih 6$ are a consequence of $ii)$ of Proposition \ref{proposition: M}. The row $\alt 4$ follows from~$iii)$: $\Q(\sqrt{-7})$ does not split $(-1,-1)_\Q$ so it can not appear in the list.

Finally, for the row $\sym 4$, besides from the above considerations, one needs to invoke Proposition~\ref{prop:m} and Table~\ref{table: ms}. Indeed, first note that if the class number is $1$, then $M$ can only be $\Q(\sqrt{-2})$ because of statement $iv)$ of Proposition~\ref{proposition: M}; second, $\Q(\sqrt{-15})$ can not appear, because it does not split $(-1,-1)_\Q$; third, the quadratic imaginary fields of class number $2$ and discriminants $D=-35$, $D = -51$, and $D= -115$ cannot occur because the respective values of $\pm 2m$ (as listed on Table~\ref{table: ms}) are not squares in $M$. 

This finishes the case where $k=M$. The case $k=\Q$ now follows by applying the previous case to the base change $A_M$.
\end{proof}
We note that Table \ref{table: M} is to be read as follows: if $M$ does not belong to $\mathcal{M}(G)$, then there does not exists any $A/k$ such that $\Gal(K/M)\simeq G$ and its absolutely simple factor has CM by $M$. We do not claim, however, that for any pair $(G,M)$ with $M\in \mathcal{M}(G)$ there does exist such an $A$. In other words, it might be that some of the pairs of $(G,M)$ in Table \ref{table: M} cannot be realized by any abelian surface. 

In \S \ref{section: allST}, we will illustrate several approaches that one can take to finding examples of abelian surfaces realizing a concrete pair $(G,M)$. For example, one can perform a search over equations of genus~$2$ curves and try to identify the endomorphism algebra of its Jacobian and the minimum field of definition of its endomorphisms; or one can look for equations over families of genus~$2$ curves parametrizing those with a specified group of automorphisms. A slightly less explicit method (in the sense that one does not get the equation of a genus $2$ curve out of it) is to find $A$ as a simple factor of the restriction of scalars of a suitable elliptic curve with CM by $M$. We illustrate this in the following remark, which sumarizes a result of T. Nakamura in \cite{Na04}.

\begin{remark}\label{remark: exampleclassgroupc2xc2}  The pair $(G,M)=(\cyc 2 \times \cyc 2, \Q(\sqrt{-84}))$ is realized by an abelian surface defined over $M$. Indeed, let $K$ be the Hilbert class field of $M$, so that $\Gal(K/M)\simeq \cyc 2 \times \cyc 2$, and let $E$ be an elliptic curve with CM by $M$ and with the property that $E$ is defined over $K$ and it is $K$-isogenous to all of its $\Gal(K/\Q)$-conjugates. Such elliptic curves exist; in fact, in \cite[p. 190]{Na04} it is shown that there are $8$ of them, up to $K$-isogeny. Let $R= \mathrm{Res}_{K/M}E$ denote the restriction of scalars, which is an abelian variety over $M$ of dimension $4$. Nakamura shows that $\End(R)$ is isomorphic to the quaternion algebra $ (a,b)_M$, where the pair $(a,b)$ is one of the entries in the table of \cite[p. 190]{Na04}. It turns out that, for all the entries in that table, there is an isomorphism $(a,b)_M\simeq\M_2(M)$, and therefore $R\sim A^2$ for some $A/M$ such that $A_K\sim E^2$. It follows from the theory of complex multiplication that $K$ is the smallest field of definition of the endomorphisms of $A$.
\end{remark}

A similar approach can be used to realize the pair $(\sym 4, \Q(\sqrt{-40}))$. The argument is a bit lengthier though, and we devote \S \ref{section: an example} below to it.
\subsection{An abelian surface with $\Gal(K/k)\simeq \sym 4$ and $M\not =\Q(\sqrt{-2})$}\label{section: an example}
In this section we provide an example of abelian surface $A/k$ that satisfies conditions $(0)$, $(1)$, $(2)$, $(3)$, and $(4)$ described at the beginning of \S \ref{sec: restriction of scalars}. The strategy is to begin with a suitable choice of the fields $K$, $k$, and $M$, and of the curve $E/K$. Then we construct~$A$ as a simple factor of the restriction of scalars of $E$, a step in which we will take advantage of the explicit calculations performed in \S \ref{sec: restriction of scalars}.

We begin by describing the number fields involved in the construction. Let $K_0$ be the number field with defining polynomial\footnote{This is the field \cite[\href{http://www.lmfdb.org/NumberField/4.0.5780.1}{Global Number Field 4.0.5780.1}]{LMFDB}, and it is contained in the degree $8$ field \cite[\href{http://www.lmfdb.org/NumberField/8.0.835210000.1}{Global Number Field 8.0.835210000.1}]{LMFDB}} $x^4-x^3+5x^2-5x+2$. The Galois closure $K_0'$ of $K_0$ has Galois group isomorphic to $\sym 4$ and contains $\Q(\sqrt{5})$ as a subfield. 

Consider also the imaginary quadratic field $M = \Q(\sqrt{-40})$. For the construction we take $k=M$ and $K=K_0'\cdot k$. The splitting field of the Hilbert class polynomial attached to $D=-40$ is $\Q(\sqrt{5})$, so that $F=M\cdot \Q(\sqrt{5})$ is the Hilbert class field of~$M$.  Thus the diagram of fields is the following:
\begin{align*}
\xymatrix{
         K \ar@{-}[d]_{12}      \\
     F = M\cdot \Q(\sqrt{5})         \\
  k=M=\Q(\sqrt{-40}) \ar@{-}[u]^{2} & \Q(\sqrt{5})
\ar@{-}[ul]^2\\
    &\Q \ar@{-}[ul]^2\ar@{-}[u]^2  
}
\end{align*}
with $\Gal(K/k)\simeq \sym 4$ and $\Gal(K/F)\simeq \alt 4$.  For future reference we also put $K_1=K_0\cdot k$ (note that $K_1$ is of degree $4$ over $k$ and its Galois closure is $K$).

The discriminant of $M$ satisfies condition $\mathrm{(NE)}$ of \S \ref{sec: restriction of scalars}, so there exists an elliptic curve $E^*/F$ which is $F$ isogenous to its $\Gal(F/k)$-conjugate. Thus $E^*$ is a $k$-curve completely defined over $F$ and one can attach to it a cohomology class $\gamma_{E^*}^F$, which lies in $H^2(\Gal(F/k),M^*)$ and it is non-trivial since $E^*$ does not admit a model up to isogeny over $k$. In fact, it will be more convenient for us to regard $E^*$ as a $k$-curve completely defined over $K$. In particular, we will be concerned with the cohomology class $\gamma_{E^*}^K\in H^2(\Gal(K/k),M^*)$, which is nothing more than the image of $\gamma_{E^*}^F$ under the inflation map $$H^2(\Gal(F/k),M^*)\ra H^2(\Gal(K/k),M^*).$$  
Throughout this section, let us simply write $\gamma_{E^*}=\gamma_{E^*}^K$. Therefore, as in \eqref{eq:C_E_star }, the cocycle $c_{E^*}$ representing $\gamma_{E^*}$ is of the form
  \begin{align}\label{eq:C_E_star}
    c_{E^*}(\sigma,\tau)= \begin{cases} m \text{ if } \sigma_{|F}\neq \mathrm{Id} \text{ and }\tau_{|F}\neq \mathrm{Id},\\ 1 \text{ otherwise,} \end{cases}
  \end{align}
for some $m\in M^*\setminus (M^*)^2$.  The cohomology class $\gamma_{E^*}$ is determined by the element~$m$ of \eqref{eq:C_E_star} (or rather by its class modulo $(M^*)^2$). This is precisely what we computed in the third entry of Table \ref{table: ms}; we record the result for future reference.
\begin{lemma}\label{lemma: m}
  The element $m$ of \eqref{eq:C_E_star} equals either $2$ or $-2$, up to multiplication by an element of $(M^*)^2$.
\end{lemma}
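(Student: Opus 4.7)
The plan is to prove Lemma~\ref{lemma: m} by an explicit computation following exactly the recipe described immediately before Table~\ref{table: ms}, specialized to the case $D=-40$. Recall that $\gamma_{E^*}$ is determined by the class of $m$ modulo $(M^*)^2$, and that $m$ is computed from any choice of compatible isogeny $\mu_\sigma\colon E^*\to \acc{\sigma}E^*$ over $F$ (where $\sigma$ generates $\Gal(F/k)\simeq \cyc 2$) via the identity $\acc{\sigma}\mu_\sigma\circ\mu_\sigma=[\pm m]$ as an endomorphism of $E^*$; this is intrinsic modulo $(M^*)^2$, since replacing $\mu_\sigma$ by $\lambda\circ\mu_\sigma$ for $\lambda\in M^*$ multiplies $m$ by $\lambda\cdot\acc{\sigma}\lambda=\mathrm{Nm}_{M/\Q}(\lambda)\in(M^*)^2$ (indeed, $\lambda\in M^*\subseteq F$ is fixed by $\sigma$ only in $\Gal(F/M)$-direction, so one needs to expand: more precisely, the composition changes by $\lambda\cdot\acc{\sigma}\lambda$, and since $\sigma$ acts trivially on $M$, this equals $\lambda^2$). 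Hence it suffices to exhibit one concrete $\mu_\sigma$ and to read off $m$ from the kernel polynomial of the composite.

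First I would take the explicit Weierstrass model
\[
E^*\colon y^2=x^3+(135\sqrt{5}-1125)x+6480\sqrt{5}-54000
\]
defined over $F=\Q(\sqrt{-40},\sqrt 5)$. Its $j$-invariant is a root of the Hilbert class polynomial of discriminant $-40$, so $E^*$ has CM by the maximal order of $M$. Because $-40$ satisfies condition $(\mathrm{NE})$, one checks that $E^*$ is $F$-isogenous to $\acc{\sigma}E^*$; I would produce a witness by computing a cyclic isogeny $\mu_\sigma\colon E^*\to\acc{\sigma}E^*$ of small degree using Vélu's formulas, verifying that its kernel is the $F$-rational subgroup cut out by $x+6\sqrt{5}-30$. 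Conjugating the coefficients of the rational functions defining $\mu_\sigma$ yields $\acc{\sigma}\mu_\sigma$, whose kernel polynomial is $x-6\sqrt{5}-30$.

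Next I would compose: $\acc{\sigma}\mu_\sigma\circ\mu_\sigma$ is an endomorphism of $E^*_F$, and since $\End(E^*_F)=\mathcal O_M$, it is automatically a CM endomorphism. To identify it, one computes the kernel polynomial of the composite and compares it with that of the multiplication-by-$n$ maps for small $n$. The claim is that it matches the kernel polynomial of $[2]$ on $E^*$, whence $\acc{\sigma}\mu_\sigma\circ\mu_\sigma=[\pm 2]$. Consequently $m\equiv\pm 2\pmod{(M^*)^2}$, which is the statement of the lemma.

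The main obstacle is purely a matter of carrying out the polynomial arithmetic over $F$ correctly; this is routine with a computer algebra system, and indeed the authors have carried it out in Sage (with scripts posted at the referenced GitHub repository). No structural difficulty arises because the sign ambiguity in $m$ (coming from the fact that the kernel polynomial determines an isogeny only up to post-composition with $[-1]$) is exactly absorbed into the statement ``$m\in\{2,-2\}$ modulo squares''. Thus the proof reduces to displaying the explicit $\mu_\sigma$ above and verifying the two kernel polynomial equalities, after which the lemma follows at once.
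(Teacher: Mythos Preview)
Your proposal is correct and follows essentially the same route as the paper: the lemma is simply the $D=-40$ entry of Table~\ref{table: ms}, obtained by the explicit Sage computation with the very Weierstrass model, kernel polynomials $x\pm 6\sqrt{5}-30$, and comparison with $[2]$ that you describe. The only wrinkle is the slightly muddled parenthetical about how $m$ changes under $\mu_\sigma\mapsto\lambda\circ\mu_\sigma$; since $\sigma\in\Gal(F/k)$ acts trivially on $M\subseteq k$, the change is directly $\lambda^2$, so you can state this cleanly without the detour through a norm.
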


The curve $E^*$ is the starting point of our the construction of $A$. The next step is to consider a certain twist of $E^*$, associated to the solution of a suitable Galois embedding problem of $K/k$. Recall that the group $H^2(\sym 4,\{\pm 1\})$, which is isomorphic to $ \cyc 2\times \cyc 2$, classifies central extensions of $\sym 4$ by $\{\pm 1\}$. We are interested in the two cohomology classes $\gamma_+$ and $\gamma_-$, introduced in  \S\ref{sec: restriction of scalars}, that correspond to the extensions $\GL_2(\Z/3\Z)$ and $\mathrm{B_O}$. 

Given a cohomology class $\gamma\in H^2(\Gal(K/k),\{\pm 1\})$ a quadratic extension $L/K$ is said to be a solution to the embedding problem defined by $\gamma$ if the extension
\begin{align*}
  1\lra \{\pm 1\}\simeq \Gal(L/K)\lra \Gal(L/k)\stackrel{\mathrm{res}}{\lra} \Gal(K/k)\lra 1
\end{align*}
corresponds to the class of $\gamma$. From now on we regard $\gamma_+$ and $\gamma_-$ as elements of $H^2(\Gal(K/k),\{\pm 1\})$ by means of an identification $\Gal(K/k)\simeq \sym 4$. In this way, $\gamma_+$ and $\gamma_-$ define two embedding problems of $\Gal(K/k)$.

\begin{lemma}
  There exist solutions to the embedding problems associated to $\gamma_+$ and $\gamma_-$.
\end{lemma}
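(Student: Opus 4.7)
Solvability of the embedding problem associated to a class $\gamma \in H^2(\Gal(K/k),\{\pm 1\})$ is equivalent to the vanishing of its inflation in $H^2(G_k,\{\pm 1\}) = \Br(k)[2]$. The plan is to verify this vanishing for both $\gamma_+$ and $\gamma_-$. A convenient feature of the set-up is that $K = K_0' \cdot k$ with $K_0'/\Q$ already Galois of group $\sym 4$; hence both classes $\gamma_\pm$ are inflated from $H^2(\Gal(K_0'/\Q),\{\pm 1\})$, and one may first study the obstruction over $\Q$ and then base change to $k$.

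My first approach would be to invoke Serre's trace-form formula for the two double-cover embedding problems of $\sym n$: for a polynomial $f$ of degree $n$ with Galois group $\sym n$ and splitting field $K_0'/\Q$, the obstruction in $\Br(\Q)[2]$ to each of the embedding problems $\widetilde{\sym n}^\pm \to \sym n$ is the sum of the Hasse--Witt invariant $w_2(q_f)$ of the trace form $q_f(x) = \Tr_{K_0/\Q}(x^2)$ and a correction involving $(2, \disc f)_\Q$. With the explicit quartic $f(x) = x^4 - x^3 + 5x^2 - 5x + 2$ at hand, I would diagonalise $q_f$ over $\Q$ and compute the $\binom{4}{2} = 6$ Hilbert symbols contributing to $w_2(q_f)$. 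The discriminant of $f$ is $5780 = 2^2 \cdot 5 \cdot 17^2$, so $\Q(\sqrt{\disc f}) = \Q(\sqrt{5})$, consistently with $\Gal(K_0'/\Q(\sqrt 5)) \simeq \alt 4$.

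Having identified the two $\Q$-obstructions as explicit quaternion classes, I would base change to $k = \Q(\sqrt{-40}) = \Q(\sqrt{-10})$ and check that both inflations vanish in $\Br(k)[2]$. Since the kernel of $\Br(\Q)[2] \to \Br(k)[2]$ is generated by Hilbert symbols of the form $(a, -10)_\Q$, it suffices to verify the local behaviour of the two obstructions at the archimedean place and at primes above $2$, $5$, and $17$; this is a finite, mechanical computation once the diagonalisation of $q_f$ has been chosen.

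\textbf{Main obstacle.} The hard part is organising the Hasse--Witt computation without error (choice of integral basis, diagonalisation of $q_f$, manipulation of Hilbert symbols) and correctly matching the identification $\gamma_+ \leftrightarrow \GL_2(\Z/3\Z)$ and $\gamma_- \leftrightarrow \mathrm{B_O}$ against the two sign conventions appearing in Serre's formula. I expect the fields $k$ and $K_0'$ to have been chosen precisely so that both obstructions land in the kernel $\ker(\Br(\Q)[2] \to \Br(k)[2])$, yielding solvability over $k$. As a fallback, one could bypass Serre's formalism entirely by producing $\alpha_\pm \in K^*$ whose images under the connecting map $K^*/(K^*)^2 \to H^2(\Gal(K/k),\{\pm 1\})$ (from the Kummer sequence combined with inflation) equal $\gamma_\pm$, and then take $L_\pm = K(\sqrt{\alpha_\pm})$; this is more constructive but sidesteps the need to compute cohomology groups directly.
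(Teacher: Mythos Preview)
Your proposal is correct and follows essentially the same route as the paper: the paper invokes \cite[Thm.~3.8]{Qu95} (which is the Serre trace-form criterion you describe) to identify the two obstructions in $\Br(k)[2]$ as $w(Q_{K_1})\otimes(\pm 2,d_{K_1})$, then reduces to $\Q$ via $K_1=K_0\cdot k$ and checks that $w(Q_{K_0})$ is ramified only at $2$ and $\infty$ while $(\pm 2,d_{K_0})$ is split by $k$, so both classes die upon base change to $k=\Q(\sqrt{-40})$. Your anticipated ``main obstacle'' is thus milder than you fear: once one observes that $2$ ramifies in $k$ and $k$ is totally imaginary, the local verifications collapse immediately and no full diagonalisation of $q_f$ is needed.
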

\begin{proof}
  Let $Q_{K_1}$ be the quadratic form $x\mapsto \mathrm{Tr}_{K_1/k}(x^2)$ and let $w(Q_{K_1})$ denote its Hasse--Witt invariant. We denote by $d_{K_1}$ the discriminant of $K_1/k$. By \cite[Thm. 3.8]{Qu95} the embedding problems corresponding to $\mathrm{B_O}$  and to $\GL_2(\Z/3\Z)$ are solvable if and only if the following elements in the Brauer group of $k$ are trivial:
  \begin{align}
    w(Q_{K_1})\otimes (2,d_{K_1}) \text{ and } w(Q_{K_1})\otimes (-2,d_{K_1}).
  \end{align}
 Since $K_1 = K_0\cdot k$ the above classes can be regarded as 
  \begin{align}
    w(Q_{K_0})_k \otimes (2,d_{K_0})_k \text{ and } w(Q_{K_0})_k\otimes (-2,d_{K_0})_k,
  \end{align}
where the index $k$ stands for the image under the natural map $\mathrm{Br}(\Q)\ra \mathrm{Br}(k)$. One checks that $w(Q_{K_0})$ is ramified only at $2$ and $\infty$, and therefore $w(Q_{K_0})_k$ is trivial (since $k$ is imaginary and $2$ ramifies in $k$). Also, the algebras $(\pm 2, d_{K_0})$ are split by $k$, so that $(\pm 2,d_{K_0})_k$ are trivial.
\end{proof}

Let $\gamma_\pm\in H^2(\Gal(K/k),\{\pm 1\})$ be the cohomology class defined as follows:
  \begin{align}\label{eq:gamma_pm}
    \gamma_\pm= \begin{cases} \gamma_+ \text{ if } m = 2 \mod (M^*)^2 \\ \gamma_- \text{ if } m = -2\mod (M^*)^2. \end{cases}
  \end{align}
Let $L = K(\sqrt{\beta})$ be a solution to the embedding problem associated to $\gamma_\pm$ and define  $E= (E^*)_\beta$, the quadratic twist of $E^*$ by $\beta$. Since $L/k$ is Galois, Lemma~\ref{lemma: completely def} implies that $E$ is a $k$-curve completely defined over $K$ with cohomology class 
  \begin{align}\label{eq: gammaE second}
    \gamma_{E} = \gamma_{E^*} \cdot \gamma_\pm.
  \end{align}
Let $R=\Res_{K/k} E$. In view of Remark~\ref{rk: validity of these lemmas in general}, the statements of Lemma \ref{lemma:the center} and Lemma \ref{lemma:contains M2} are valid in the present context, and they allow for the computation of $\End(R)$.
\begin{lemma}\label{lemma: choice of gamma}
   The center of $\End(R)$ is isomorphic to $M\times M \times M$,
\end{lemma}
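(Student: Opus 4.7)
The plan is to combine the identification $\End(R) \simeq M^{c_E}[G]$ with the explicit computation of the center from Lemma \ref{lemma:the center} and then check that the choice of $\gamma_\pm$ in \eqref{eq:gamma_pm} was rigged precisely to make the potentially quadratic factor split.

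First, I would invoke the isomorphism $\End(R) \simeq M^{c_E}[G]$ of algebras (this is the restriction-of-scalars identification used at the beginning of \S\ref{sec: restriction of scalars}, following \cite[\S 15]{Gr80}), so that computing $Z(\End(R))$ reduces to computing the center of the twisted group algebra attached to the cocycle $c_E$ of the $k$-curve $E$. Since $E$ is completely defined over $K$ with $\Gal(K/k) \simeq \sym 4$ and, by \eqref{eq: gammaE second}, its class decomposes as $\gamma_E = \gamma_{E^*} \cdot \gamma_\pm$ in the form required by Remark \ref{rk: validity of these lemmas in general}, Lemma \ref{lemma:the center} applies verbatim: the center is isomorphic to
\begin{equation*}
M \times M[t]/(t^2 \mp 2m),
\end{equation*}
where the sign is $-$ or $+$ according to whether we twisted by $\gamma_+$ or $\gamma_-$, and $m$ is the scalar appearing in \eqref{eq:C_E_star}.

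The remaining step is to see that in each of the two possible scenarios the quadratic factor $M[t]/(t^2 \mp 2m)$ splits as $M \times M$. By Lemma \ref{lemma: m} we have $m \equiv \pm 2 \pmod{(M^*)^2}$, and by the very definition \eqref{eq:gamma_pm} we chose $\gamma_\pm = \gamma_+$ when $m \equiv 2$ and $\gamma_\pm = \gamma_-$ when $m \equiv -2$. In the first case the relevant polynomial is $t^2 - 2m \equiv t^2 - 4 \pmod{(M^*)^2}$, and in the second it is $t^2 + 2m \equiv t^2 - 4 \pmod{(M^*)^2}$. In either case the polynomial has two distinct roots in $M$, so $M[t]/(t^2 \mp 2m) \simeq M \times M$, and therefore the center of $\End(R)$ is isomorphic to $M \times M \times M$, as desired.

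The only conceptual point here is to notice that the construction of $E$ as a twist of $E^*$ by a solution of the embedding problem attached to $\gamma_\pm$ was set up precisely so that the center of $M^{c_E}[G]$ becomes split; no further computation is needed beyond citing Lemmas \ref{lemma:the center} and \ref{lemma: m}. There is no real obstacle, since the earlier sections have already done all the work.
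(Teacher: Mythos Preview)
Your proof is correct and follows essentially the same approach as the paper's own argument: apply Lemma~\ref{lemma:the center} (via Remark~\ref{rk: validity of these lemmas in general}) to identify the center as $M\times M[t]/(t^2\mp 2m)$, then invoke Lemma~\ref{lemma: m} together with the choice in \eqref{eq:gamma_pm} to see that $\mp 2m$ is a square in $M$, so the quadratic factor splits as $M\times M$. Your explicit sign check in each case matches the paper's computation; the only difference is that the paper phrases the last step by writing the center directly as $M\times M[t]/(t^2-4)$.
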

\begin{proof}
  By Lemma \ref{lemma:the center} the center of $M^{c_E}[\Gal(K/k)]$ is isomorphic to 
  \begin{align}\label{eq:+2-2}
    M \times M[t]/(t^2 \mp 2m).
  \end{align}
By Lemma \ref{lemma: m} we have that $m=2$ or $m=-2$, and thanks to our choice of $\gamma_\pm$ in \eqref{eq:gamma_pm} we see that \eqref{eq:+2-2} becomes
\begin{align*}
  M\times M[t]/(t^2-4) \simeq M\times M\times M.
\end{align*}
\end{proof}

\begin{proposition}\label{prop:twisted_group_algebra}
One has:
\begin{align}\label{eq:dec into simple algebras}
\End(R)\simeq \M_2(M)\times \M_2(M)\times B,  
\end{align}
where $B$ is an $M$-central simple algebra of $M$-dimension $16$.
\end{proposition}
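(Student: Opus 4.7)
The plan is to decompose $\End(R)$ into simple algebras using the computation of its center and the dimension count for the twisted group algebra $M^{c_E}[G]$, where $G = \Gal(K/k) \simeq \sym 4$.

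First, I would recall the isomorphism $\End(R) \simeq M^{c_E}[G]$ coming from the theory of restriction of scalars of $k$-curves, so that $\End(R)$ has $M$-dimension $|G| = 24$. By Lemma \ref{lemma: choice of gamma}, the center of $\End(R)$ is $M \times M \times M$, so $\End(R)$ decomposes as a product of exactly three simple $M$-central algebras:
\begin{equation*}
\End(R) \simeq \M_{r_1}(D_1) \times \M_{r_2}(D_2) \times \M_{r_3}(D_3),
\end{equation*}
where each $D_i$ is an $M$-central division algebra of Schur index $c_i$. The dimension constraint reads $r_1^2 c_1^2 + r_2^2 c_2^2 + r_3^2 c_3^2 = 24$.

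Next I would invoke Lemma \ref{lemma:contains M2}, which is applicable here by Remark \ref{rk: validity of these lemmas in general}, since $E$ is a $k$-curve completely defined over $K$ with cohomology class $\gamma_E = \gamma_{E^*} \cdot \gamma_\pm$ of the required form. This tells us that each simple factor contains $\M_2(M)$; in particular $r_i c_i \geq 2$ and thus each term $(r_i c_i)^2$ is a perfect square with value at least $4$. A trivial case analysis on partitions of $24$ into three such perfect squares (the only candidates are in $\{4, 9, 16\}$) shows that the unique decomposition is $4 + 4 + 16$. After reordering, $r_1 c_1 = r_2 c_2 = 2$ and $r_3 c_3 = 4$.

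Finally, I would rule out that either of the first two factors is a quaternion division algebra. If $r_i c_i = 2$, then either $(r_i, c_i) = (2, 1)$, giving $\M_2(M)$, or $(r_i, c_i) = (1, 2)$, giving a quaternion division algebra over $M$. The latter is impossible, because a division algebra has no zero divisors and hence cannot contain $\M_2(M)$ as a subalgebra. Thus the first two factors are both $\M_2(M)$, and setting $B := \M_{r_3}(D_3)$ gives an $M$-central simple algebra of $M$-dimension $16$, which proves the proposition. The step most prone to subtlety is the applicability of Lemma \ref{lemma:contains M2} in the present context, but this is precisely the content of Remark \ref{rk: validity of these lemmas in general}, so the rest of the argument is purely dimensional bookkeeping.
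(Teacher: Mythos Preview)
Your proof is correct and follows essentially the same approach as the paper: use the center computation (Lemma~\ref{lemma: choice of gamma}) to get three $M$-central simple factors, solve the dimension equation $r_1^2+r_2^2+r_3^2=24$ in squares $\geq 4$, and then use Lemma~\ref{lemma:contains M2} to force the two $4$-dimensional factors to be $\M_2(M)$. Your write-up is slightly more explicit (keeping track of Schur indices and spelling out why a quaternion division algebra cannot contain $\M_2(M)$), and your care in invoking Remark~\ref{rk: validity of these lemmas in general} to justify Lemma~\ref{lemma:contains M2} in this context is appropriate.
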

\begin{proof}
By the above lemma  $M^{c_E}[\Gal(K/k)]$ is the product of three $M$-central simple algebras. Denote by $r_1^2$, $r_2^2$, and $r_3^2$ the $M$-dimension of each simple factor. Since the dimension of $M^{c_E}[\Gal(K/k)]$ is $24$ we have that
\begin{align*}
  r_1^2 + r_2^2 + r_3^2 = 24.
\end{align*}
The only solution (up to permutation) is $r_1 = 2$, $r_2 = 2$ and $r_3 = 4$. That is to say, two factors have dimension $4$ and the other factor has dimension $16$. Since each simple factor contains $\M_2(M)$ by Lemma \ref{lemma:contains M2}, we see that the algebras of dimension~$4$ must be isomorphic to $\M_2(M)$, and this proves the proposition.
\end{proof}

The decomposition of $\End(R)$ provided by Proposition \ref{prop:twisted_group_algebra} implies an isotypical decomposition of $R$ of the form 
\begin{align}\label{eq:isotypic dec}
  R\sim A_1^2 \times A_2^2 \times A_3^r,
\end{align}
where the $A_i$ are simple abelian varieties, $A_1$ and $A_2$ correspond to the factors of the form $\M_2(M)$, and $r$ is equal to either $1$, $2$ or $4$. Let $A$ be either $A_1$ or $A_2$. We next show that $A$ satisfies the following properties:
\begin{itemize}
\item $\End(A)\simeq M$; 
\item $A_K\sim E^2$; and 
\item $K$ is the smallest extension of $k$ satisfying that $\End(A_K)=\End(A_\Qbar)$.
\end{itemize}
The first statement above is clear in light of \eqref{eq:dec into simple algebras}. We prove the other two in the following lemmas; this will finish the proof that $A$ is an abelian surface satisfying conditions $(0)$--$(4)$ stated at the beginning of \S \ref{sec: restriction of scalars}, as we aimed to see. 
\begin{lemma}
  $A$ is an abelian surface; that is, $A_K\sim E^2$.
\end{lemma}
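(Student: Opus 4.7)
The plan is to combine the universal property of the Weil restriction with the isotypical decomposition of $R$ furnished by Proposition~\ref{prop:twisted_group_algebra} in order to read off $\dim A$ from a single $\Hom$ computation. First, base changing $R = \Res_{K/k} E$ to $K$ gives $R_K \sim \prod_{\sigma \in \Gal(K/k)} \acc\sigma E$; since $E$ is a $k$-curve completely defined over $K$, each $\acc\sigma E$ is $K$-isogenous to $E$, so $R_K \sim E^{[K:k]} = E^{24}$. In particular, every $k$-simple factor $A_i$ of $R$ becomes $K$-isogenous to a power of $E$: writing $A_{i,K}\sim E^{d_i}$ with $d_i = \dim A_i$, and noting that $\End_K(E)\simeq M$ (which holds because $M\subseteq k\subseteq K$ forces the CM of $E$ to be defined over $K$), one has $\End(A_{i,K})\simeq \M_{d_i}(M)$.

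The crucial step is to compute $\Hom_k(A,R)$ in two different ways. On the one hand, the universal property of the restriction of scalars gives $\Hom_k(A,R) = \Hom_K(A_K, E)$, and combining this with $A_K \sim E^{d_1}$ yields an isomorphism $\Hom_k(A,R) \simeq M^{d_1}$ of $M$-vector spaces. On the other hand, the isotypical decomposition $R \sim A_1^2 \times A_2^2 \times A_3^r$ coming from \eqref{eq:dec into simple algebras}, together with the fact that $A = A_1$, $A_2$, $A_3$ are pairwise non-$k$-isogenous $k$-simple varieties (they correspond to distinct simple factors of $\End(R)$) and that $\End(A)\simeq M$, yields $\Hom_k(A,R) \simeq \Hom_k(A, A^2) \simeq \End(A)^2 \simeq M^2$. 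Comparing $M$-dimensions forces $d_1 = 2$, so $A$ is an abelian surface and $A_K \sim E^2$.

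I do not anticipate any serious obstacle here: the real work, namely the decomposition of $\End(R)$ into three simple factors of $M$-dimensions $4$, $4$, and $16$, has already been secured in Proposition~\ref{prop:twisted_group_algebra}. The only point worth verifying is that the two $M$-vector space structures on $\Hom_k(A,R)$ actually agree, but this is automatic because in both computations the $M$-action is induced by the natural embedding $M \simeq Z(\End(A_\Qbar))\hookrightarrow \End(A)$ provided by Proposition~\ref{proposition: centre racional}.
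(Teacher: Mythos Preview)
Your proof is correct and follows essentially the same approach as the paper: both compute $\Hom(A,R)$ in two ways, once via the universal property of restriction of scalars as $\Hom(A_K,E)$, and once via the isotypical decomposition \eqref{eq:isotypic dec} together with $\End(A)\simeq M$, concluding $A_K\sim E^2$ from the resulting equality $\Hom(A_K,E)\simeq M^2$. Your version simply makes explicit the preliminary step that $A_K$ is a priori a power of $E$ (via $R_K\sim E^{24}$), which the paper leaves implicit.
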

\begin{proof}
  By the universal property of the restriction of scalars functor we have an isomorphism of vector spaces
  \begin{align*}
    \Hom(A,R)\simeq \Hom(A_K,E);
  \end{align*}
from \eqref{eq:isotypic dec} and \eqref{eq:dec into simple algebras} we see that $\Hom(A,R)\simeq M^2$ and therefore $\Hom(A_K,E)\simeq M^2$, which implies that $A_K\sim E^2$.
\end{proof}
\begin{lemma}
  There is no field $N$ with $k\subsetneq N \subsetneq K$ such that $\End(A_N)=\End(A_\Qbar)$.
\end{lemma}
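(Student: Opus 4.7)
The plan is to reformulate the statement as the injectivity of a certain Galois representation and then rule out the possible non-trivial kernels using Schur's lemma over the field $L$ from \S\ref{section: galstr}. Since $M \subseteq \End(A)$, the $G := \Gal(K/k) \simeq \sym 4$ action on $\End(A_K) \simeq \M_2(M)$ preserves the center and is by $M$-algebra automorphisms, hence by Skolem--Noether factors through a homomorphism $\rho\colon G \to \PGL_2(M)$. Galois descent gives $\End(A_N) = \M_2(M)^{\rho(\Gal(K/N))}$ for any intermediate $N/k$, and since the centralizer in $\M_2(M)$ of any non-trivial element of $\PGL_2(M)$ is a proper subalgebra, the statement amounts to $\ker\rho = 1$. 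As $\ker\rho$ is normal in $\sym 4$ it suffices to rule out $\ker\rho \in \{V_4,\alt 4,\sym 4\}$, and the case $\ker\rho = \sym 4$ is immediate from $\End(A) = M \ne \M_2(M)$.

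For the remaining two cases I would pass to the quadratic extension $L = K(\sqrt\beta)$ of our construction, over which $A_L \sim (E^*)^2$ and $\End(A_L) = \M_2(M)$. A first step is to identify this $L$ with the minimal field of \S\ref{section: galstr} attached to $A$: this follows because $\gamma_E|_{\alt 4}$ is non-trivial by the non-symmetry of $\gamma_\pm$ (cf. Lemma~\ref{lemma: nonsym}), so $E$ is not $K$-isogenous to $E^*$, and both candidate fields are quadratic over $K$. Lemma~\ref{lemma: LF} then yields $\Gal(L/F) \simeq \mathrm{B_T}$, invoking that $(\alt 4)$ holds and $M = \Q(\sqrt{-40}) \ne \Q(\sqrt{-1}), \Q(\sqrt{-3})$. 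I would next fix an $L$-isogeny $\psi\colon (E^*)^2_L \to A_L$ and show that $U_\sigma := \psi^{-1} \circ \acc\sigma\psi \in \GL_2(M)$ defines an injective group homomorphism $U\colon \mathrm{B_T} \to \GL_2(M)$ (no cocycle appears because $\End((E^*)^2)$ is $F$-defined, and injectivity comes from the minimality of $L$). Transporting the Galois action along $\psi$ converts it into conjugation by $U_\sigma$, so $\End(A_N) = C_{\M_2(M)}(U(\Gal(L/N)))$ for every intermediate $N \supseteq F$.

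In the case $\ker\rho \supseteq \alt 4$ (so $N = F$) the relevant subgroup would be all of $\mathrm{B_T}$, while in the case $\ker\rho \supseteq V_4$ (so $N = K^{V_4}$) it would be the preimage $Q_8$ of $V_4 \subset \alt 4$ under $\mathrm{B_T} \twoheadrightarrow \alt 4$. In both cases the subgroup acts on $M^2$ by a faithful $2$-dimensional representation of a non-abelian group, hence absolutely irreducibly, so Schur's lemma gives $C_{\M_2(M)}(\cdot) = M$. This would force $\End(A_F) = \End(A_{K^{V_4}}) = M \ne \M_2(M)$, ruling out both cases and completing the proof. The main technical point I anticipate is the identification of $L$ with the field of \S\ref{section: galstr} together with the consequent injectivity of $U$; once these are in place, the conclusion is a routine computation of centralizers.
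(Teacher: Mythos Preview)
Your approach is correct but genuinely different from the paper's. The paper argues cohomologically: if such an $N$ existed (taken minimal, hence Galois over $k$), then $A_N\sim C^2$ with $C_K\sim E$, so $\gamma_E$ restricts trivially to $\Gal(K/N)$; since every non-trivial proper normal subgroup of $\sym 4$ contains the Klein four subgroup $H$, and $\gamma_E=\gamma_{E^*}\gamma_\pm$ with $\gamma_{E^*}|_{\alt 4}=1$, this forces $\gamma_\pm|_H=1$, contradicting the relation $c_\pm(s,t)c_\pm(t,s)=-1$ already recorded in \eqref{eq: css and ctt}. Your route instead lifts to $L$ and computes $\End(A_N)$ as the centralizer of a faithful $2$-dimensional representation of $\mathrm{B_T}$ (resp.\ $Q_8$), concluding via Schur. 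Both arguments ultimately hinge on the same fact---non-triviality of $\gamma_\pm$ on $V_4$, equivalently absolute irreducibility of the quaternionic representation---but the paper's version is shorter because it never needs to build $U$ or identify $L$.

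One point needs tightening. You invoke Lemma~\ref{lemma: LF} and ``the minimal field of \S\ref{section: galstr}'', but that section is set up under the standing hypothesis that $K$ is already the minimal field of definition of $\End(A_{\Qbar})$; in particular, the inclusion $K\subseteq L$ asserted there uses precisely this minimality, which is what you are proving. The fix is easy and stays within your strategy: $\Gal(L/F)\simeq \mathrm{B_T}$ follows directly from the construction, since $\Gal(L/k)\in\{\mathrm{B_O},\GL_2(\Z/3\Z)\}$ by the choice of $L$ as a $\gamma_\pm$-solution and the preimage of $\alt 4$ in either group is $\mathrm{B_T}$. For injectivity of $U$, note that $\ker U$ is normal in $\mathrm{B_T}$; if it contained the center $Z(\mathrm{B_T})=\Gal(L/K)$ then $\psi$ would descend to $K$, forcing $E^*_K\sim E_K$, contrary to $\gamma_E\neq\gamma_{E^*}$ (your own observation that $\gamma_\pm\neq 1$). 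Since the only normal subgroup of $\mathrm{B_T}$ missing the center is trivial, $U$ is injective. With this adjustment your argument is complete.
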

\begin{proof}
  Suppose that such $N$ exists. Without loss of generality we can assume that it is the smallest field satisfying this property. This implies, in particular, that $N/k$ is a Galois extension. Then $A_N\sim C^2$, for some elliptic curve $C$ over~$N$. Since $A_K\sim E^2$ this implies that $C_K\sim E$. Therefore, the cohomology class $\gamma_E$ is trivial when restricted to $\Gal(K/N)$. But $\Gal(K/N)$ is a normal subgroup of $\Gal(K/k)\simeq \sym 4$, and all normal subgroups of $\sym 4$ are contained in $\alt 4$; thus $\Gal(K/N)$ is a normal subgroup of $\Gal(K/F)\simeq \alt 4$. Since $\gamma_E = \gamma_{E^*}\cdot \gamma_\pm$ and the restriction of $\gamma_{E^*}$ to $\Gal(K/F)$ is trivial, this implies that $\gamma_\pm$ restricted $\Gal(K/N)$ is trivial. This is a contradiction, because it turns out that neither $\gamma_+$ nor $\gamma_-$ become trivial when restricted to any normal subgroup of $\alt 4$. Indeed, let $H=\langle s,t \rangle$ denote the unique normal subgroup of $\Gal(K/k)$ isomorphic to $\cyc 2 \times \cyc 2$. By looking at the diagram of subgroups of $\sym 4$ we see that $\Gal(K/N)$ must contain $H$. But we have already seen in \eqref{eq: css and ctt} that $\gamma_\pm$ restricted to $H$ is non-trivial.
\end{proof}

\section{Two applications to Sato--Tate groups}\label{section: ST}

In this section, we give two applications of the results obtained so far to the theory of Sato--Tate groups of abelian surfaces over number fields. The Sato--Tate group of an abelian surface $A$ defined over the number field $k$, which we will denote by $\ST(A)$, is a compact real Lie subgroup of $\USp(4)$ that conjecturally  governs the distribution of the Frobenius elements in the image of $\ell$-adic representation attached to $A$. We refer to \cite[\S2]{FKRS12} for the precise construction of the Sato--Tate group of an abelian variety defined over a number field. The reader is referred to the original source \cite[Chap. 8]{Ser12} for a construction of the Sato--Tate group in a more general context.  

The main result of \cite{FKRS12} establishes the existence of~$52$ possibilities for the Sato--Tate group of an abelian surface defined over a number field, all of which occur for some choice of the number field and the abelian surface (see \cite[Thm. 4.3]{FKRS12}). This result is complemented by establishing a one-to-one correspondence between Sato--Tate groups of abelian surfaces and their \emph{Galois endomorphism types}, an algebraic structure which encaptures both the Galois action on the ring of endomorphisms of the abelian surface and the structure of this ring as an $\R$-algebra (see \cite[Def. 1.3]{FKRS12} for a precise definition). Let $N_{\mathrm{ST},2}(k)$ denote the number of subgroups of $\USp(4)$ up to conjugacy that arise as Sato--Tate groups of abelian surfaces defined over the number field $k$.

We will make use of the notations for Sato--Tate groups and Galois endomorphism types introduced in \cite[\S4]{FKRS12} from now on. On Table~\ref{table:STgroups} we show the dictionary between Sato--Tate groups and Galois endomorphism types. The groups decorated with a $\star$ are those that arise over~$\Q$. Note that we thus have $N_{\mathrm{ST},2}(\Q)=34$. 

\begin{table}[h]
\begin{center}
\small
\begin{tabular}{ll|ll}\hline
 $G$  & Galois type & $G$  & Galois type \\\hline\vspace{-10pt}\\
 $C_1$  & $\bF[\cyc{1}]$ & $D_{4,1}{}^\star$   & $\bF[\dih{4},\dih{2}]$  \\
 $C_2$  & $\bF[\cyc{2}]$ & $D_{6,1}{}^\star$  & $\bF[\dih{6},\dih{3},\M_2(\R)]$ \\
 $C_3$  & $\bF[\cyc{3}]$ & $D_{3,2}{}^\star$  & $\bF[\dih{3},\cyc{3}]$  \\
 $C_4$  & $\bF[\cyc{4}]$ & $D_{4,2}{}^\star$  & $\bF[\dih{4},\cyc{4}]$ \\
 $C_6$  & $\bF[\cyc{6}]$ & $D_{6,2}{}^\star$  & $\bF[\dih{6},\cyc{6}]$  \\
 $D_2$  & $\bF[\dih{2}]$ & $O_1{}^\star$  & $\bF[\sym{4},\alt{4}]$  \\
 $D_3$  & $\bF[\dih{3}]$ & $E_1{}^\star$  & $\bE[\cyc{1}]$ \\
 $D_4$  & $\bF[\dih{4}]$ & $E_2{}^\star$        &  $\bE[\cyc{2}, \C]$  \\
 $D_6$  & $\bF[\dih{6}]$ & $E_3{}^\star$  & $\bE[\cyc{3}]$\\
 $T$    & $\bF[\alt{4}]$ & $E_4{}^\star$  & $\bE[\cyc{4}]$ \\
 $O$    & $\bF[\sym{4}]$ & $E_6{}^\star$  & $\bE[\cyc{6}]$ \\
 $J(C_1)$ & $\bF[\cyc{2},\cyc{1},\HH]$ & $J(E_1){}^\star$  & $\bE[\cyc{2},\R\times\R]$  \\
 $J(C_2){}^\star$ & $\bF[\dih{2},\cyc{2},\HH]$ & $J(E_2){}^\star$  & $\bE[\dih{2}]$ \\
 $J(C_3)$ & $\bF[\cyc{6},\cyc{3},\HH]$ & $J(E_3){}^\star$  & $\bE[\dih{3}]$  \\
 $J(C_4){}^\star$ & $\bF[\cyc{4}\times\cyc{2},\cyc{4}]$ & $J(E_4){}^\star$   & $\bE[\dih{4}]$ \\
 $J(C_6){}^\star$ & $\bF[\cyc{6}\times\cyc{2},\cyc{6}]$ & $J(E_6){}^\star$    & $\bE[\dih{6}]$ \\
 $J(D_2){}^\star$ & $\bF[\dih{2}\times\cyc{2},\dih{2}]$ & $F$ &  $\bD[\cyc{1}]$  \\
 $J(D_3){}^\star$  & $\bF[\dih{6},\dih{3},\HH]$ & $F_a$   & $\bD[\cyc{2},\R\times\C]$ \\
 $J(D_4){}^\star$  & $\bF[\dih{4}\times\cyc{2},\dih{4}]$ & $F_{ab}$  & $\bD[\cyc{2},\R\times\R]$  \\
 $J(D_6){}^\star$  & $\bF[\dih{6}\times\cyc{2},\dih{6}]$ & $F_{ac}{}^\star$  & $\bD[\cyc{4}]$  \\
 $J(T){}^\star$  & $\bF[\alt{4}\times\cyc{2},\alt{4}]$& $F_{a,b}{}^\star$  & $\bD[\dih{2}]$ \\
 $J(O){}^\star$  & $\bF[\sym{4}\times\cyc{2},\sym{4}]$& $G_{1,3}$  & $\bC[\cyc{1}]$ \\
 $C_{2,1}{}^\star$  & $\bF[\cyc{2},\cyc{1},\M_2(\R)]$  & $N(G_{1,3}){}^\star$  & $\bC[\cyc{2}]$ \\
 $C_{4,1}$  & $\bF[\cyc{4},\cyc{2}]$ & $G_{3,3}{}^\star$  & $\bB[\cyc{1}]$  \\
 $C_{6,1}{}^\star$   & $\bF[\cyc{6},\cyc{3},\M_2(\R)]$ & $N(G_{3,3}){}^\star$ &  $\bB[\cyc{2}]$  \\
 $D_{2,1}{}^\star$  & $\bF[\dih{2},\cyc{2},\M_2(\R)]$  & $\USp(4){}^\star$  & $\bA[\cyc{1}]$\\\hline
\end{tabular}
\vspace{6pt}
\caption{Sato--Tate groups and Galois endomorphism types of abelian surfaces} \label{table:STgroups}
\end{center}
\end{table}

There are $6$ possibilities for the connected component of the identity of the Sato--Tate group of an abelian surface $A$ defined over $k$. As customary, we will denote it by $\ST(A)^0$. The dictionary between Sato--Tate groups and Galois endomorphism types establishes that Sato--Tate groups with identity component isomorphic to the unitary group $\Unitary(1)$ of degree $1$ correspond to \emph{absolute Galois endomorphism type}~$\bF$, that is, to abelian surfaces that are $\Qbar$-isogenous to the square of an elliptic curve with CM by a quadratic imaginary field $M$. A Galois endomorphism type of absolute type~$\bF$ consists of the data
\begin{equation}\label{equation: data GT}
\bF[\Gal(K/k),\Gal(K/kM),\mathbb B],
\end{equation}
where $K/k$ is the minimal extension over which all the endomorphisms of $A$ are defined, and $\mathbb B$ is $\HH$ or $\M_2(\R)$ depending on whether there exists a subextension $K_0/k$ of $K/k$ such that $\End(A_{K_0})\otimes \R$ is isomorphic to $\HH$ or not. We remove $\Gal(K/kM)$ from the notation in case that it coincides with $\Gal(K/k)$, and similarly we avoid writing $\mathbb B$ when the other data is enough to uniquely determine the Galois endomorphism type.

\subsection{A finiteness result}

The first application to the theory of Sato--Tate groups is now an immediate corollary of Theorem~\ref{theorem: absurfQ}. 

\begin{theorem}\label{theorem: finnum} Among the $34$ possibilities for the Sato--Tate group of an abelian surface $A$ defined over $\Q$, the $18$ with identity component isomorphic to $\Unitary(1)$ only occur among the set (of cardinality at most $51$) of $\Qbar$-isogeny classes of abelian surfaces over $\Q$ that are $\Qbar$-isogenous to the square of an elliptic curve defined over $\Qbar$ with CM by $M$ in $\Mm^1 \cup \Mm^2\cup \Mm^{2,2}$. More precisely, for such an abelian surface~$A$, the set of possibilities for $M$ provided that $\ST(A_M)/\ST(A_M)^0\simeq G$ is contained in~$\Mm(G)$.
\end{theorem}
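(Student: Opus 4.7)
The plan is to translate the statement into a question about the imaginary quadratic field $M$ and then directly invoke Theorem~\ref{theorem: absurfQ}. First I would identify the 18 Sato--Tate groups in question by inspecting Table~\ref{table:STgroups}: these are precisely the starred entries whose Galois endomorphism type has absolute type $\bF$. By the dictionary between Sato--Tate groups and Galois types, an abelian surface $A/\Q$ whose Sato--Tate group lies in this list must satisfy $A_\Qbar \sim E^2$ for an elliptic curve $E$ with CM by some imaginary quadratic field $M$; this is the input needed to match the hypothesis of Theorem~\ref{theorem: absurfQ}.

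Second, I would use the standard fact, recorded in the description of the data~\eqref{equation: data GT}, that the group of components of $\ST(A_M)$ is canonically isomorphic to $\Gal(K/M)$, where $K/M$ is the minimal extension over which all endomorphisms of $A$ are defined. This identification is exactly what allows one to read off $\Gal(K/M)$ directly from the Sato--Tate group of $A_M$ (obtained from $\ST(A)$ by base change along the quadratic extension $M/\Q$).

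Third, applying Theorem~\ref{theorem: absurfQ} with $k = \Q$, for each isomorphism class of group $G \simeq \Gal(K/M)$ the field $M$ is forced to lie in $\Mm(G)$, and a glance at Table~\ref{table: M} shows that $\Mm(G) \subseteq \Mm^1 \cup \Mm^2 \cup \Mm^{2,2}$ for all such $G$. To pass from possibilities for $M$ to $\Qbar$-isogeny classes of $A/\Q$, note that if $A_\Qbar \sim E^2$ with $E$ having CM by an order of $M$, then the $\Qbar$-isogeny class of $A$ is determined by $M$ alone, since any two elliptic curves over $\Qbar$ with CM by orders of the same imaginary quadratic field are $\Qbar$-isogenous. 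Hence the number of $\Qbar$-isogeny classes in question is bounded by
\[
|\Mm^1 \cup \Mm^2 \cup \Mm^{2,2}| = 9 + 18 + 24 = 51,
\]
giving the claimed bound, and the more precise assertion on $M$ in terms of $G$ is exactly the content of Table~\ref{table: M}.

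The proof is essentially bookkeeping: the substantive work has already been done in Theorem~\ref{theorem: absurfQ}. The only point that requires any care is the correct identification between the component group of $\ST(A_M)$ and $\Gal(K/M)$, but this is a formal consequence of the construction of the Sato--Tate group recalled in \cite[\S2]{FKRS12}, so no real obstacle arises.
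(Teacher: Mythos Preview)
Your proposal is correct and follows essentially the same route as the paper: reduce to absolute type~$\bF$, identify $\ST(A_M)/\ST(A_M)^0$ with $\Gal(K/M)$, and apply Theorem~\ref{theorem: absurfQ}. One small point: the isomorphism between the component group and $\Gal(K/M)$ is not really the content of~\eqref{equation: data GT}; the paper invokes \cite[Prop.~2.17]{FKRS12} for this, and you should cite that instead. Your explicit remark that the $\Qbar$-isogeny class of $A$ is determined by $M$ (because any two CM elliptic curves with CM by orders in the same $M$ are $\Qbar$-isogenous) is a useful clarification that the paper leaves implicit.
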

\begin{table}[h]
\begin{center}
\setlength{\extrarowheight}{0.5pt}
\vspace{6pt}
\begin{tabular}{c|c}
\hline
S--T groups & $\Mm(\Gal(K/M))$\\\hline
 $C_{2,1}$ & $\Mm^1$\\
 $J(C_2), D_{2,1}$ & $\Mm^1\cup \Mm^2$ \\
 $C_{6,1},D_{3,2}$ & $\Mm^1$\\
 $J(C_4),D_{4,2}$ & $\{\Q(\sqrt{-1}),\Q(\sqrt{-2})\}\cup \Mm^2$\\
 $J(C_6),D_{6,2}$ & $\{\Q(\sqrt{-3})\}\cup \Mm^2$\\
 $J(D_2),D_{4,1}$ & $\Mm^1\cup \Mm^2\cup \Mm^{2,2}$\\
 $J(D_3),D_{6,1}$ & $\Mm^1\cup \Mm^2$\\
 $J(D_4)$ & $\{\Q(\sqrt{-1}),\Q(\sqrt{-2})\}\cup \Mm^2\cup \Mm^{2,2}$\\
 $J(D_6)$ & $\{\Q(\sqrt{-3})\}\cup \Mm^2\cup \Mm^{2,2}$\\
 $J(T),O_1$ & $\Mm^1\setminus \{\Q(\sqrt{-7})\}$\\
 $J(O)$ & $\{\Q(\sqrt{-1}),\Q(\sqrt{-2})\}\cup \Mm^2\setminus \{\Q(\sqrt{-15}),\Q(\sqrt{-35}),\Q(\sqrt{-51}),\Q(\sqrt{-115})\}$\\\hline
\end{tabular}
\vspace{6pt}
\caption{Possibilities for the field $M$ depending on the Sato--Tate group.}\label{table: MM}
\end{center}
\end{table}

\begin{proof}
We have already mentioned that an abelian surface whose Sato--Tate group has identity component isomorphic to $\Unitary (1)$ has absolute Galois endomorphism type~$\bF$. Observe that
$$
\Gal(K/M)\simeq \ST(A_M)/\ST(A_M)^0\simeq  \ST(A)^{ns}/\ST(A)^{ns,0}\,.
$$ 
The first isomorphism is \cite[Prop. 2.17]{FKRS12}. For the second, as well as for the definition of $\ST(A)^{ns}$, see the paragraph following \cite[Rem. 4.10]{FKRS12}. One concludes by readily checking (for example from \cite[Table 2]{FKRS12}) that the Sato--Tate groups $G$ on the first column and the $i$th row of Table~\ref{table: MM} are precisely those for which $G^{ns}/G^{ns,0}$ is isomorphic to the finite group appearing in the first column and the $i$th row of Table~\ref{table: M}. 
\end{proof}

\subsection{A number field for all genus 2 Sato-Tate groups}\label{section: allST}

The second application and the main result of the present work is the following. 

\begin{theorem}\label{theorem: main}
Set 
$$
k_0:=\Q(\sqrt{-40},\sqrt{-51},\sqrt{-163},\sqrt{-67},\sqrt{19\cdot 43},\sqrt{-57})\,.
$$
Then, $N_{\mathrm{ST},2}(k_0)=52$. That is, there exist $52$ abelian surfaces defined over $k_0$ realizing each of the $52$ possible Sato--Tate groups of abelian surfaces defined over number fields.
\end{theorem}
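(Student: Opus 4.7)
The strategy is to exhibit, for each of the $52$ possibilities $G$ for the Sato--Tate group of an abelian surface over a number field, an abelian surface defined over a subfield of $k_0$ whose base change to $k_0$ has Sato--Tate group equal to $G$. Since the upper bound $N_{\mathrm{ST},2}(k_0) \leq 52$ is automatic, exhibiting one surface per group yields equality.

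Most candidates will be obtained by base changing to $k_0$ explicit abelian surfaces from \cite{FKRS12}, each originally defined over some minimal number field $\kappa$; a few further examples are built using the techniques of the present paper. The compatibility check is as follows. If $A/\kappa$ has $\ST(A) = G$ with minimal field of definition of endomorphisms $K$, then $\ST(A_{k_0 \cdot \kappa})$ still has identity component $G^0$ (invariant under base change), while its group of components becomes $\Gal(K / (k_0 \cdot \kappa \cap K))$. I must therefore verify, example by example, that $k_0 \cdot \kappa \cap K = \kappa$, i.e.\ that no intermediate subfield of $K/\kappa$ is absorbed into the compositum with $k_0$. This reduces to a bookkeeping of quadratic subfields.

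The pivotal new case is $O$, which by Table~\ref{table:STgroups} requires an abelian surface geometrically isogenous to $E^2$ with $E$ having CM by some $M \subseteq k_0$ and with $\Gal(K/k_0) \simeq \sym 4$. Until now only surfaces with $M = \Q(\sqrt{-2})$ were known, which is incompatible with $J(O)$ (requiring $M \not\subseteq k_0$) being realized over the same field. The abelian surface constructed in §\ref{section: an example} over $k = \Q(\sqrt{-40})$ with $M = \Q(\sqrt{-40})$ resolves this conflict: since $\Q(\sqrt{-40}) \subseteq k_0$, its base change realizes $O$ over $k_0$, once I verify that the $\sym 4$ structure is preserved. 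For $J(O)$ I base change a known surface over $\Q$ with $M = \Q(\sqrt{-2})$ to $k_0$, which retains the $J$-component provided $\sqrt{-2} \notin k_0$; this is to be checked by a computation in the square class group generated by the discriminants $\{-10, -51, -163, -67, 19\cdot 43, -57\}$.

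The remaining $50$ groups are handled analogously: for each, one selects a suitable abelian surface from \cite{FKRS12} and checks the non-collapse condition against $k_0$. The individual factors of $k_0$ are chosen to supply the variety of CM fields needed for the $\Unitary(1)$-type groups (class-number-$1$ fields $\Q(\sqrt{-163})$, $\Q(\sqrt{-67})$ for the $T$ and $J(T)$ examples; class-number-$2$ fields $\Q(\sqrt{-40})$, $\Q(\sqrt{-51})$, $\Q(\sqrt{-57})$ for various dihedral-type examples), while $\sqrt{19\cdot 43}$ provides a real quadratic factor needed for quaternionic endomorphism algebras occurring in some $\SU(2)\times\SU(2)$-type groups. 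The principal obstacle is precisely this simultaneous case-by-case verification: checking, for all $52$ groups at once, that the quadratic subfields of the relevant endomorphism fields intersect $k_0$ exactly as required. The constraints identified in Theorem~\ref{theorem: absurfQ} guarantee that the resulting system of conditions is jointly satisfiable by a polyquadratic extension of $\Q$, and the specific choice of $k_0$ is what realizes them all at once.
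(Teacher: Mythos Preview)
Your high-level strategy is correct and matches the paper's: exhibit one abelian surface per group and verify that base change to $k_0$ does not collapse the component group. You have also correctly singled out $O$ as the crucial new case resolved by the construction of \S\ref{section: an example}.

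However, there is a genuine gap in the claim that ``the remaining $50$ groups are handled analogously: for each, one selects a suitable abelian surface from \cite{FKRS12}''. Only $34$ of the $52$ groups arise over~$\Q$ (those starred in Table~\ref{table:STgroups}), and the \cite{FKRS12} examples for the other $18$ are defined over $\Q(i)$, $\Q(\sqrt{-2})$, or $\Q(\sqrt{-3})$---none of which is contained in $k_0$ (deliberately so, cf.\ Remark~\ref{remark: nofields}). So for these $18$ groups there is no example in \cite{FKRS12} defined over a subfield of $k_0$ to base change. The paper therefore builds \emph{new} examples for each of them, organized into several distinct families: twisting an elliptic curve with CM by $\Q(\sqrt{-163})$ by Artin representations (for $C_1,C_2,C_3,D_2,D_3,T$); the Cardona--Quer parametrizations of genus-$2$ curves with large automorphism group, using CM by $\Q(\sqrt{-40})$, $\Q(\sqrt{-51})$ (for $C_4,D_4,C_6,D_6,J(C_3)$); two further explicit curves for $J(C_1)$ and $C_{4,1}$; and products of CM elliptic curves with discriminants $-19,-43,-67,-163$ (for $F,F_a,F_{ab},G_{1,3}$). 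This is the bulk of the work, and your proposal does not account for it.

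This also leads you to misread the roles of the generators of $k_0$. The field $\Q(\sqrt{-67})$ is not there for $T$ or $J(T)$ (the latter already arises over $\Q$, and $T$ uses CM by $\Q(\sqrt{-163})$ via twisting); it enters through the product constructions for $F$, $F_a$, $G_{1,3}$. Likewise $\sqrt{19\cdot 43}$ is not tied to quaternionic endomorphism algebras of type~$\bB$, but to the product $E_{-19}\times E_{-43}$ realizing $F_{ab}$; and $\sqrt{-57}$ enters via the $2\dih 6$ Cardona--Quer family for $J(C_3)$, not a dihedral group.
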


Hereafter, we will be concerned with the proof of the previous theorem. We will assemble our constructions of abelian surfaces into six main families. Before proceeding to the proof, we make the following remark.

\begin{remark}\label{remark: nofields}
Many of the examples shown in \cite[Table 11]{FKRS12} satisfy that $K$ contains either $\Q(\sqrt{-1})$, $\Q(\sqrt{-2})$, or $\Q(\sqrt{-3})$. In order that these examples also serve as such after base change to~$k_0$, we would rather have~$k_0$ not to contain any of the three latter fields.
As mentioned in the introduction, there are examples of abelian surfaces with Galois endomorphism types $\mathbf{F}[\sym 4\times \cyc 2, \sym 4]$ or $\mathbf{F}[\sym 4, \alt 4]$ (equiv. Sato--Tate groups $J(O)$ or $O_1$) with $M=\Q(\sqrt{-2})$, and we will use them to realize these Galois endomorphism types over $k_0$ via base change. 
 While it is possible to construct an abelian surface over a number field $k$ with Galois endomorphism type $\mathbf{F}[\sym 4]$ and $M=\Q(\sqrt{-1})$ (by applying the below twisting construction to the appropriate representation of $G_8$, for example), this has the undesired consequence that then $k$ is forced to contain $M=\Q(\sqrt{-1})$.
In order to realize $\mathbf{F}[\sym 4]$, we will alternatively use the construction in \S\ref{section: an example} which produces an abelian surface with Galois endomorphism type $\mathbf{F}[\sym 4]$ and $M=\Q(\sqrt{-40})$. This forces~$k_0$ to contain $\Q(\sqrt{-40})$, but as we will see this has rather innocuous effects. 
\end{remark}

\textbf{The restriction of scalars construction.} As mentioned in the above remark, the abelian surface $A$ over $k=\Q(\sqrt{-40})$ defined in \S\ref{section: an example} has Sato--Tate group $O$. Since $K\cap k_0=k$, it follows from \cite[Prop. 2.17]{FKRS12} that the Sato--Tate group of the base change $A_{k_0}$ remains the same.

\textbf{Base change constructions.} We consider the set of curves $\Sigma_\Q$ of \cite[Table 11]{FKRS12} which are defined over $k=\Q$. Note that $\Sigma_\Q$ has cardinality~$34$. We have used the software Sage\footnote{In \url{https://github.com/xguitart/sato-tate} the interested reader can find the Sage script that we used.} to check that for each $C\in \Sigma_\Q$ the number field~$K$ attached to $\Jac(C)$ satisfies that $K\cap k_0=\Q$. It then follows that\footnote{Note that, since $k_0$ contains neither $\Q(i)$, nor $\Q(\sqrt{-3})$, nor $\Q(\sqrt{-2})$, no other curve of \cite[Table 11]{FKRS12} can be base changed to $k_0$ in such a way that the Sato--Tate group is preserved.} $\ST(\Jac(C))\simeq \ST(\Jac(C)_{k_0})$.

\textbf{Twisting constructions.} We will construct examples of abelian surfaces realizing the Sato--Tate groups $C_n$ for $n\in \{1,2,3\}$, $D_n$ for $n\in \{2,3\}$, and $T$ by using the twisting procedure of \cite{MRS07} or \cite[\S2]{Mil72}. For our purposes, a particular case of this construction will suffice: given an elliptic curve $E$ defined over $k_0$ with CM by an order $\mathcal O$ in a quadratic imaginary field $M\subseteq k_0$, a finite Galois extension $L/k_0$, and an Artin representation $\varrho\colon \Gal(L/k_0)\ra \GL_2(\mathcal O)$ with coefficients in $\mathcal O$, there is an abelian surface $A:=E\otimes \varrho$ defined over $k_0$ such that $A_L\sim E_L^2$ and there is an isomorphism of $G_{k_0}$-modules
\begin{equation}\label{equation: twist}
\End(A_\Qbar) \simeq \varrho \otimes \varrho^* \otimes \End(E_\Qbar)\,,
\end{equation}  
where $\varrho^*$ denotes the contragredient representation of $\varrho$ (\cite[Prop. 1.6. i)]{MRS07}). For example, we may take $E$ to be an elliptic curve with CM by $M:=\Q(\sqrt{-163})\subseteq k_0$ and defined over $k_0$. It then follows from \eqref{equation: twist} that the minimal extension $K/k_0$ over which all the endomorphisms of $A$ are defined is the extension cut out by $\varrho \otimes \varrho^*$. For each
$$
H\in \{\cyc{{n}}\text{ for }n\in \{1,2,3\}, \dih{{n}}\text{ for }n\in \{2,3\}\text{, and }\alt 4\}
$$
take a Galois extension $L/k_0$ such that $\Gal(L/k_0)$ be, respectively, isomorphic to
$$
\tilde H\in \{\cyc{{2n}}\text{ for }n\in \{1,2,3\}, \dih{{2n}}\text{ for }n\in \{2,3\}\text{, and }\mathrm{B_T}\}\,.
$$
Take a faithful rational Artin representation $\varrho$ of degree $2$ of $\tilde H$. By inspection of the character table of $H$ one can compute the trace of $\varrho\otimes \varrho^*$ and see that in all the cases the kernel $N$ of $\rho\otimes \varrho^*$ has order $2$, and that the quotient $\tilde H /N$ is isomorphic to $H$;  thus, the field cut by $\rho\otimes \varrho^*$ has Galois group isomorphic to $H$. It follows from the description given in \eqref{equation: data GT} that the Galois endomorphism type of~$A$ is $\mathbf{F}[H]$. According to Table~\ref{table:STgroups}, we have realized the desired Sato--Tate groups.

\begin{remark} One may try to make similar constructions to realize the groups $C_4$, $D_4$, $C_6$, $D_6$, and $O$ over $k_0$. However, in these cases, the corresponding group $\tilde H$ does not possess a faithful rational degree~$2$ representation, but rather one with coefficients in the ring of integers of the fields $\Q(i)$ or $\Q(\sqrt{-2})$. Taking a representation with coefficients in these fields would force $E$ to have CM by them, and thus $k_0$ to contain them. This is a possibility that we have excluded in Remark~\ref{remark: nofields}.
\end{remark}

\textbf{Cardona and Quer's constructions.} We will construct curves whose Jacobians have Sato--Tate groups $C_4$, $D_4$, $C_6$, $D_6$, and $J(C_3)$, respectively. For this we will recall results of Cardona and Quer parametrizing $\Q$-isomorphism classes of genus $2$  curves with prescribed automorphism groups. If $y^2=f(x)$, with $f(x)\in k[x]$, is a defining equation for a genus $2$ curve $C$ defined over $k$, there is an injective group homomorphism
$$
\Aut(C_\Qbar) \hookrightarrow \GL_2(\Qbar)
$$
that sends an automorphism 
$$
(x,y)\mapsto \left(\frac{mx+n}{px+q},\frac{mq-np}{(px+q)^3}y\right),\quad \text{where $m,n,p,q\in \Qbar$,}
$$
to the matrix $\begin{pmatrix}
m & n\\
p & q
\end{pmatrix}$. From now on, we will use matrix notation to write automorphisms of genus $2$ curves.

The next proposition encompasses weakened versions of \cite[Prop. 4.3]{CQ07}, \cite[Prop. 4.9]{CQ07}, and \cite[Thm. 7.4.1]{Car01} which are enough for our purposes.

\begin{proposition}\label{proposition: Cardona}
Let $u,v\in \Q^*$ and $s,z\in \Q$.
\begin{enumerate}[i)]
\item If $1-z^2u=s^2uv$, then the curve given by the affine equation
\begin{equation}\label{equation: D4a}
\begin{array}{lll}
C_{\dih 4}\colon y^2 &= &(1+2uz)x^6-8suvx^5+v(3-10uz)x^4+\\[4pt]
& &+v^2(3-10uz)x^2+8suv^3x+v^3(1+2uz)
\end{array}
\end{equation}
has automorphism group isomorphic to $\dih 4$. It is generated by the matrices
\begin{equation}\label{equation: D4b}
U=\begin{pmatrix}
\alpha & \beta \\
\beta / v & -\alpha
\end{pmatrix} ,\qquad
V=\begin{pmatrix}
0 & -\sqrt{v}\\
1/\sqrt{v} & 0
\end{pmatrix},
\end{equation}
where
\begin{equation}\label{equation: D4c}
\alpha=\sqrt{\frac{1-z\sqrt u}{2}},\qquad \beta=\sqrt{\frac{v(1+z\sqrt u)}{2}}.
\end{equation}
\item If $u^3-z^2=3s^2v$, the curve given by the affine equation
\begin{equation}\label{equation: D6a}
\begin{array}{lll}
C_{\dih 6}\colon y^2&=&27(u+2z)x^6-324svx^5+27v(u-10z)x^4+360sv^2x^3\\[4pt]
&&+9v^2(u+10z)x^2-36sv^3x+v^3(u-2z)
\end{array}
\end{equation}
has automorphism group isomorphic to $\dih 6$. It is generated by the matrices
\begin{equation}\label{equation: D6b}
U=\frac{1}{\sqrt u}\begin{pmatrix}
\alpha & \beta \\
3\beta / v & -\alpha
\end{pmatrix} ,\qquad
V=\frac{1}{2}\begin{pmatrix}
1 & \sqrt{v}\\
-3/\sqrt{v} & 1
\end{pmatrix},
\end{equation}
where $\alpha, \beta \in \Qbar$ are such that
\begin{equation}\label{equation: D6c}
\alpha^3-\frac{3u}{4}\alpha-\frac{z}{4}=0,\qquad \alpha^2+3\frac{\beta^2}{v}= u.
\end{equation}
\item If $u^3-z^2=3s^2v$, the curve given by the affine equation
\begin{equation}\label{equation: 2D12a}
\begin{array}{lll}
C_{2\dih 6}\colon y^2 & = & 27zx^6-162svx^5-135vzx^4+\\[4pt]
&& + 180sv^2x^3+45v^2zx^2-18sv^3x-v^3z
\end{array}
\end{equation}
has automorphism group isomorphic to\footnote{The group $2 \dih 6$ is a certain double cover of $\dih 6$. Its GAP identification number is $\langle 24,8\rangle$.} $2\dih 6$. It is generated by the matrices
\begin{equation}\label{equation: 2D12b}
U=\frac{1}{\sqrt u}\begin{pmatrix}
\alpha & \beta \\
3\beta / v & -\alpha
\end{pmatrix} ,\qquad
V=\frac{\sqrt{-3}}{2}\begin{pmatrix}
1 & -\sqrt{v}/3\\
1/\sqrt{v} & 1
\end{pmatrix},
\end{equation}
where $\alpha, \beta \in \Qbar$ are such that
\begin{equation}\label{equation: 2D12c}
\alpha^3-\frac{3u}{4}\alpha-\frac{z}{4}=0,\qquad \alpha^2+3\frac{\beta^2}{v}= u.
\end{equation}
\end{enumerate}
\end{proposition}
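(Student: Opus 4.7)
The proposition is stated as a weakening of results of Cardona and Quer, so the plan is essentially a verification combined with an appeal to the classification of automorphism groups of genus~$2$ curves. In each of the three parts, my approach would have three steps: check that the stated matrices are automorphisms of the curve, check that they satisfy the abstract group relations of the target group, and check that no further automorphisms exist.

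For the first step, recall that a matrix $\smtx{m}{n}{p}{q}$ acts on $C\colon y^2=f(x)$ by $(x,y)\mapsto ((mx+n)/(px+q),(mq-np)y/(px+q)^3)$, and this is an automorphism precisely when the polynomial identity
\begin{equation*}
(mq-np)^2 f(x)=(px+q)^6 f\!\left(\frac{mx+n}{px+q}\right)
\end{equation*}
holds. For each of $(U,V)$ in \eqref{equation: D4b}, \eqref{equation: D6b}, and \eqref{equation: 2D12b}, one substitutes and expands; the resulting coefficient-matching conditions in $x$ reduce, after using the auxiliary definitions of $\alpha,\beta$ from \eqref{equation: D4c}, \eqref{equation: D6c}, and \eqref{equation: 2D12c}, to exactly the stated algebraic constraints $1-z^2u=s^2uv$ or $u^3-z^2=3s^2v$. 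This is the most computationally intensive part of the proof, and I would carry it out with a computer algebra system.

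For the second step, one computes the relevant products in $\GL_2(\Qbar)$. In part~$i)$ one verifies $U^4=I$, $V^2=I$, and $VUV=U^{-1}$, presenting $\langle U,V\rangle\simeq\dih 4$; in part~$ii)$ one obtains analogously $U^6=I$, $V^2=I$, $VUV=U^{-1}$, so $\langle U,V\rangle\simeq\dih 6$; in part~$iii)$, using $V$ as in \eqref{equation: 2D12b}, one checks that $V^2\neq I$ while $V^4=I$ and that the relations yield the non-split central extension $2\dih 6$ of $\dih 6$ by $\cyc 2$. These are purely formal matrix calculations.

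The third step, showing $\Aut(C_\Qbar)=\langle U,V\rangle$, is where I would rely on the Igusa--Cardona classification of automorphism groups of genus~$2$ curves: only a short explicit list of groups can occur, and the presence of an element of order $4$ (resp.\ $6$, resp.\ the $2\dih 6$-structure) together with the location of the curve in moduli forces $\Aut(C_\Qbar)$ to coincide with the generic group in its stratum. Alternatively, one can compute the Igusa invariants of the curves \eqref{equation: D4a}, \eqref{equation: D6a}, \eqref{equation: 2D12a} and confirm that they land in the locus classified by Cardona--Quer as having exactly $\dih 4$, $\dih 6$, or $2\dih 6$ automorphisms. The main obstacle is simply bookkeeping in step one; conceptually, no new ideas beyond Cardona--Quer's parametrization of the relevant strata of $\mathcal M_2$ are needed.
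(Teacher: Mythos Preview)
The paper does not give its own proof of this proposition: it is stated as a quotation of \cite[Prop.~4.3, Prop.~4.9]{CQ07} and \cite[Thm.~7.4.1]{Car01}, with no argument reproduced. Your plan of direct verification---check that $U,V$ preserve the sextic, check the group relations, and invoke the classification of automorphism groups of genus~$2$ curves to rule out a larger group---is exactly the kind of argument those references carry out, so your approach is appropriate and would succeed.

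That said, your second step contains a concrete slip. In part~$i)$ one computes directly from \eqref{equation: D4c} that $\alpha^2+\beta^2/v=1$, hence $U^2=I$, while $V^2=-I$ (the hyperelliptic involution), so it is $V$ that has order~$4$ and $U$ that has order~$2$, not the reverse. The same swap occurs in part~$ii)$: from \eqref{equation: D6c} one gets $U^2=I$, while a short computation gives $V^3=-I$, so $V$ has order~$6$. None of this affects the conclusion that $\langle U,V\rangle$ is the claimed dihedral group, but since you present these relations as the content of the verification, you should state them correctly; as written, the claim ``$V^2=I$'' is simply false and would be caught immediately upon carrying out the matrix multiplication you describe.
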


\begin{remark}
We wish to warn the reader of a minor misprint in the work of Cardona and Quer. In \cite[Prop. 3.5]{CQ07} the lower left entry of matrix $U$ is missing a factor of $3$ (compare with \eqref{equation: D6b} above); we have introduced a similar correction to the second equation of both \eqref{equation: D6c} and \eqref{equation: 2D12c}.
\end{remark}

\begin{remark}\label{remark: autos}
It is well known and easy to show that an automorphism group structure of the type $\dih 4$, $\dih 6$ or $2\dih 6$ on a genus $2$ curve induces a decomposition up to $\Qbar$-isogeny of its Jacobian as the square of an elliptic curve $E$. In the $2\dih 6$ case, $E$ has automatically CM by $\Q(\sqrt{-3})$. On \cite[p. 112]{Car01}, one can find the values of the parameter $u\in \Q^*$ for which $E$ has CM in the $\dih 4$ and $\dih 6$ cases. For example, in the former case, for $u=81/320$, $E$ has CM by $\Q(\sqrt{-40})$; in the latter case, for $u=4/17$, $E$ has CM by $\Q(\sqrt{-51})$.
\end{remark}

\begin{lemma}\label{lemma: fieldK}
Let $C$ denote either $C_{\dih 4}$, or $C_{\dih 6}$, or $C_{2\dih 6}$. Suppose that $z\not =0$ and that $\Jac(C)$ is $\Qbar$-isogenous to the square of an elliptic curve with CM by $M$. Then the minimal extension of $\Q$ over which all the endomorphisms of $\Jac(C)$ are defined is $K=M(\alpha,\beta,\sqrt{u},\sqrt{v})$.
\end{lemma}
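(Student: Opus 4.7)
The strategy is to exploit the isomorphism $\End^0(\Jac(C)_\Qbar)\simeq \mathrm{M}_2(M)$ coming from the hypothesis $\Jac(C)_\Qbar\sim E^2$ with $E$ having CM by $M$. The field $K$ is the fixed field of the kernel of the natural action of $G_\Q$ on $\End^0(\Jac(C)_\Qbar)$, so in order to determine $K$ it suffices to pin down the minimal field of definition of a set of algebra generators. I would use that this algebra is generated over $\Q$ by the center $M$ together with the image of $\Aut(C_\Qbar)$ acting functorially on $\Jac(C)_\Qbar$.

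For the inclusion $M(\alpha,\beta,\sqrt u,\sqrt v)\subseteq K$, I would first observe that $M\subseteq K$: the center $Z(\End^0(\Jac(C)_\Qbar))\simeq M$ is a canonical subalgebra, and $G_K$-invariance forces $G_K$ to act trivially on $M$ as a subfield of $\Qbar$. Next, the genus being $\geq 2$ makes the natural representation $\Aut(C_\Qbar)\hookrightarrow \End^0(\Jac(C)_\Qbar)^\times$ injective and Galois-equivariant, so every automorphism of $C_\Qbar$ must be defined over $K$ as a morphism of varieties. Reading off the entries of the explicit generators $U,V$ in Proposition~\ref{proposition: Cardona} and tracking both coordinates of the associated fractional linear transformation (in particular the factor $mq-np$ controlling the $y$-component), one finds that $\alpha,\beta,\sqrt v$ must lie in $K$, and so must $\sqrt u$: in the $\dih 4$ case via the identity $2\alpha^2=1-z\sqrt u$ together with the hypothesis $z\neq 0$, and in the $\dih 6$ and $2\dih 6$ cases via the factor of $\sqrt u$ appearing after simplifying the $y$-component of $U$ (the $\sqrt u$ in the denominators does not cancel with the determinant in these cases).

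For the reverse inclusion, write $K':=M(\alpha,\beta,\sqrt u,\sqrt v)$. Over $K'$ the matrices $U$ and $V$ and the center $M$ are all defined, so it suffices to prove that the $M$-subalgebra $B\subseteq\mathrm{M}_2(M)$ spanned by the image of $\Aut(C_\Qbar)$ exhausts $\mathrm{M}_2(M)$: this will yield $\End^0(\Jac(C)_{K'})=\End^0(\Jac(C)_\Qbar)$ and hence $K\subseteq K'$. Since $\Aut(C_\Qbar)$ is finite, Maschke's theorem ensures that $B$ is semisimple. If $B$ were a proper $M$-subalgebra of $\mathrm{M}_2(M)$, it would have $M$-dimension at most $3$, but every non-commutative semisimple $M$-algebra has $M$-dimension at least $4$ (the minimum being realized by $\mathrm{M}_2(M)$ itself or by a quaternion division algebra over $M$). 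Since $\Aut(C_\Qbar)\in\{\dih 4,\dih 6,2\dih 6\}$ is non-abelian, $B$ is non-commutative, and hence $B=\mathrm{M}_2(M)$, as desired.

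The main obstacle is the careful bookkeeping involved in the first inclusion: one needs to check that in each of the three cases no accidental cancellation allows either $U$ or $V$ to be defined over a strictly smaller field, and this is precisely where the hypothesis $z\neq 0$ is used to rule out degeneracies.
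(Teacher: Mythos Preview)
Your overall architecture matches the paper's: reduce to showing that $K$ is the compositum of $M$ with the minimal field of definition of $\Aut(C_\Qbar)$, and then identify that field explicitly from the matrices $U,V$. Your reverse inclusion via the semisimplicity/dimension count is a pleasant alternative to the paper's bare citation of the remark on automorphism-induced splittings, although you should be a bit more careful in justifying that the \emph{center} $M$ really sits inside $\End^0(\Jac(C)_{K'})$ before forming the $M$-span (the $\Q$-span of the image of $\Aut(C_\Qbar)$ alone is only $4$-dimensional, isomorphic to $\mathrm{M}_2(\Q)$, so it does not exhaust $\mathrm{M}_2(M)$).

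The real problem is in your forward inclusion for the $\dih 6$ and $2\dih 6$ cases. You assert that $\sqrt u$ survives in the $y$-component of $U$ because ``the $\sqrt u$ in the denominators does not cancel with the determinant''. But it does: with $m=\alpha/\sqrt u$, $n=\beta/\sqrt u$, $p=3\beta/(v\sqrt u)$, $q=-\alpha/\sqrt u$ one gets
\[
mq-np=\frac{1}{u}\Bigl(-\alpha^2-\frac{3\beta^2}{v}\Bigr)=-1
\]
by the relation $\alpha^2+3\beta^2/v=u$ from Proposition~\ref{proposition: Cardona}. Consequently the full automorphism $U$, on both coordinates, is already defined over $\Q(\alpha/\sqrt u,\beta/\sqrt u)$, and the argument only yields $\alpha/\sqrt u,\ \beta/\sqrt u,\ \sqrt v\in K$, not $\alpha,\beta,\sqrt u$ individually. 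The paper closes this gap with the cubic relation: since $\alpha^2=u(\alpha/\sqrt u)^2\in K$ and $\alpha^3-\tfrac{3u}{4}\alpha-\tfrac{z}{4}=0$ with $z\neq 0$, one solves $\alpha=\dfrac{z/4}{\alpha^2-3u/4}\in K$, whence $\sqrt u=\alpha\cdot(\alpha/\sqrt u)^{-1}\in K$ and $\beta\in K$. Without this step your inclusion $M(\alpha,\beta,\sqrt u,\sqrt v)\subseteq K$ is not established.
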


\begin{proof}
By Remark~\ref{remark: autos}, $K$ is the composition $K'$ of $M$ and the minimal field over which all the automorphisms of $C$ are defined. The case $\dih 4$ is then immediate. For the remaining two cases, we need to check that $K'=M(\alpha/\sqrt u,\beta /\sqrt u, \sqrt v)$ agrees with the expression for $K$ given in the statement. But this follows from the fact that 
$$
\alpha=\left(\alpha^2-\frac{3u}{4} \right)^{-1}\frac{z}{4}\in K',
$$
since $\alpha^2= u(\alpha/ \sqrt u)^2 \in K'$.   
\end{proof}
On Table \ref{table: a few ex}, $C$ denotes one of the curves $C_{\dih 4}$, $C_{\dih 6}$, or $C_{2\dih 6}$ for the choice of parameters $s=1$, $u$ and $z$ as specified on the second and third columns, and $v$ as determined by the constraints of Proposition~\ref{proposition: Cardona}. The fourth and fifth columns are computed using Lemma~\ref{lemma: fieldK}. Together with Remark~\ref{remark: autos}, they imply all but the last row of the last column. 
\begin{table}[h]
\begin{center}
\begin{tabular}{rrrrrr}
$C$ & $u$ & $z$ & $K\cap k_0$ & $\Gal(Kk_0/k_0)$ & $\ST(C_{k_0})$\\\hline
$C_{\dih 4}$ & $81/320$ & $1$ & $\Q(\sqrt{-40})$ & $\dih 4$ & $D_4$\\
$C_{\dih 4}$ & $81/320$ & $-16/9$ & $\Q(\sqrt{-40})$ &  $\cyc 4$ & $C_4$\\
$C_{\dih 6}$ & $4/17$ & $1$ & $\Q(\sqrt{-51})$ &  $\dih 6$ & $D_6$ \\
$C_{\dih 6}$ & $4/17$ & $-5/4$ & $\Q(\sqrt{-51})$& $\cyc 6$ & $C_6$ \\
$C_{2\dih 6}$ & $19$ & $19/2$ & $\Q(\sqrt{-57})$ & $\cyc 6$ & $J(C_3)$\\\hline
\end{tabular}
\vspace{6pt}
\caption{A few examples from Cardona's parametrizations.}\label{table: a few ex}
\end{center}
\end{table}

Suppose that $C$ is as specified in the last row of Table~\ref{table: a few ex}. To verify the bottom right entry, it suffices to show that $\End(\Jac(C_{K_0}))\otimes \R \simeq \HH$, where $K_0=\Q(\sqrt{-57},\alpha)$. This can be easily deduced from the fact that
$$
\Aut(C_{K_0})\simeq \mathrm{Dic}_{12},
$$
where $\mathrm{Dic}_{12}$ is the Dicyclic group of order $12$ (with GAP identification number $\langle 12,1\rangle$). This is a group whose faithful representation of degree 2 has Frobenius--Schur index $-1$ and thus it can not be embedded in $\GL_2(\R)$.

\textbf{Sutherland's additional examples.} Besides the examples showed on \cite[Table 11]{FKRS12}, the wide search performed in \cite{FKRS12}, yielded examples that will be useful to realize $J(C_1)$ and $C_{4,1}$ over $k_0$. We thank Drew Sutherland for pointing out to us the two curves in this paragraph.

Consider the curve
$$
C\colon y^2 = 3x^6 + 16x^5 - 15x^4 - 15x^2 - 16x + 3\,.
$$
We first show that the minimal extension of $\Q$ over which all of the endomorphisms of $\Jac(C)$ are defined is $K=\Q(\sqrt{-10},\sqrt{-2})$.
This follows from the fact that $K$ is minimal with the property
$$
\Aut(C_K)\simeq \GL_2(\Z/3\Z)\,.
$$
Indeed, any genus $2$ curve $C$ such that $\Aut(C_\Qbar)\simeq \GL_2(\Z/3\Z)$ is $\Qbar$-isomorphic to $y^2=x^5-x$.
Since $\Gal(Kk_0/k_0)\simeq \cyc 2$, in order to show that $\ST(\Jac(C_{k_0}))=J(C_1)$, all we need to check is that $\End(\Jac(C_{\Q(\sqrt{-10})})\otimes \R\simeq \HH$. But this follows easily from
$$
\Aut(C_{\Q(\sqrt{-10})})\simeq \SL_2(\Z/3\Z). 
$$
Indeed, $\SL_2(\Z/3\Z)$ contains the group of quaternions $Q$, whose rational faithful degree 2 representation has Frobenius--Schur index $-1$. Thus $\SL_2(\Z/3\Z)$ can not be embedded in $\GL_2(\R)$.

Consider now the curve
$$
C\colon y^2 + (x^3 + x^2 + x + 1)y = -x^5 - 2x^4 - x^3 - 2x^2
$$ 

This is the genus 2 curve \cite[\href{http://www.lmfdb.org/Genus2Curve/Q/40000/e/200000/1}{Genus 2 Curve 40000.e.200000.1}]{LMFDB}. To show that the minimal extension of $\Q$ over which all of the endomorphisms of $\Jac(C)$ are defined is $K=\Q[x]/(x^8-2x^6+4x^4-8x^2+16)$, we again check that $K$ is minimal with the property that
$$
\Aut(C_K)\simeq \GL_2(\Z/3\Z)\,.
$$

Since $\Gal(Kk_0/k_0)\simeq \cyc 4$, in order to show that $\ST(\Jac(C_{k_0}))=C_{4,1}$, all we need to check is that $\End(\Jac(C_{K_0})\otimes \R\simeq \HH$, where $K_0$ is the index $2$ subfield of $K$ given by $\Q[x]/(x^4 + 2x^3 + 4x^2 + 8x + 16)$. But this follows easily from
$$
\Aut(C_{K_0})\simeq \SL_2(\Z/3\Z), 
$$
and the same argument used for the previous curve.

\textbf{Product constructions.} For $D=-19, -43, -67, -163$, let $E_{D}$ be an elliptic curve defined over $k_0$ with CM by the quadratic imaginary field of discriminant $D$. A glimpse at \cite[\S4.4.]{FKRS12} should suffice to convince the reader that
$$
\ST(E_{-67}\times E_{-163})=F,\quad \ST(E_{-19}\times E_{-67})=F_a,\quad \ST(E_{-19}\times E_{-43})=F_{ab}.
$$
Finally, one has that $\ST(E_{-67}\times E)=G_{1,3}$, where $E$ is any elliptic curve defined over $k_0$ without CM.

\end{document}